\documentclass[11pt,reqno,twoside]{article}
\usepackage{Arxiv}
\usepackage{graphicx}
\usepackage{multirow}
\usepackage{amsmath,amssymb,amsfonts}
\usepackage{amsthm}
\usepackage[title]{appendix}
\usepackage{xcolor}
\usepackage{textcomp}
\usepackage{manyfoot}
\usepackage{booktabs}
\usepackage[ruled,vlined]{algorithm2e}
\usepackage{listings}
\usepackage{bm}
\usepackage{xspace}
\usepackage{pifont}
\usepackage{subcaption}
\usepackage[top=2.35cm, bottom=2.35cm, left=3cm, right=3cm]{geometry}

\newcommand{\carp}{CARPA\xspace}
\newcommand{\carpa}{CARPA\xspace}
\newcommand{\gcarp}{gCARPA\xspace}

\newcommand{\ee}{\bm{e}}
\newcommand{\x}{\bm{x}}
\newcommand{\y}{\bm{y}}
\newcommand{\w}{\bm{w}}
\newcommand{\p}{\bm{p}}
\newcommand{\z}{\bm{z}}
\newcommand{\uu}{\bm{u}}
\newcommand{\vv}{\bm{v}}

\newcommand{\A}{\bm{A}}
\newcommand{\D}{\bm{D}}
\newcommand{\C}{\bm{C}}
\newcommand{\cF}{c_{\scriptstyle\rm{F}}}

\newcommand{\tF}{\theta_{\scriptstyle\rm{F}}}
\renewcommand{\Id}{\mathcal{I}}
\newcommand{\opF}{\mathcal{F}}

\renewcommand{\beq}{\begin{equation}}
\renewcommand{\eeq}{\end{equation}}
\renewcommand{\rank}{\mathrm{rank}}

\renewcommand{\find}{\mathrm{find}}

\renewcommand{\proj}{\mathcal{P}}
\newcommand{\iprod}[2]{\langle #1,\,#2 \rangle}
\newcommand{\norm}[1]{{|\kern-1.125pt|} #1 {|\kern-1.125pt|}}
\newcommand{\Ker}{\mathrm{ker}}

\newcommand{\tr}{\mathrm{tr}}

\renewcommand{\ran}{\mathrm{ran}}

\renewcommand{\Fix}{\mathrm{fix}}
\newcommand{\eqdef}{\stackrel{\text{\rm\tiny def}}{=}}
\newcommand{\argmin}{\mathrm{argmin}}

\newcommand{\calA}{\mathcal{A}}
\newcommand{\calB}{\mathcal{B}}
\newcommand{\calC}{\mathcal{C}}

\newcommand{\calF}{\mathcal{F}}

\newcommand{\calH}{\mathcal{H}}

\newcommand{\calM}{\mathcal{M}}
\newcommand{\calN}{\mathcal{N}}

\newcommand{\calR}{\mathcal{R}}

\newcommand{\calT}{\mathcal{T}}

\newcommand{\bbN}{\mathbb{N}}

\newcommand{\bbR}{\mathbb{R}}

\newcommand{\bmA}{\bm{A}}
\newcommand{\bmB}{\bm{B}}
\newcommand{\bmC}{\bm{C}}
\newcommand{\bmD}{\bm{D}}

\newcommand{\bmI}{\bm{I}}

\newcommand{\bmM}{\bm{M}}

\newcommand{\bmP}{\bm{P}}

\newcommand{\bmS}{\bm{S}}

\newcommand{\bma}{\bm{a}}
\newcommand{\bmb}{\bm{b}}

\newcommand{\bmf}{\bm{f}}

\newcommand{\bmz}{\bm{z}}

\theoremstyle{thmstyleone}
\newtheorem{theorem}{Theorem}[section]
\newtheorem{fact}{Fact}[section]
\newtheorem{lemma}{Lemma}[section]
\newtheorem{proposition}{Proposition}[section]
\theoremstyle{thmstyletwo}

\newtheorem{remark}{Remark}

\theoremstyle{thmstylethree}
\newtheorem{definition}{Definition}
\title{Generalized Composed Alternating Relaxed Projection Algorithm for Two-Set Feasibility Problem}
\author{
    Xinxin Li\thanks{School of Mathematics, Jilin University, P.R. China. Email: \texttt{xinxinli@jlu.edu.cn}}
    \and
    Yudong Wei\thanks{Department of Mathematics, ETH Zurich, Switzerland. Email: \texttt{yudwei@ethz.ch}. Corresponding author.}
    \and
    Hao Zhang\thanks{School of Mathematics, Jilin University, P.R. China. Email: \texttt{zhanghao1022@mails.jlu.edu.cn}}
}
\date{\today}

\begin{document}
\maketitle
\begin{abstract}
    \unboldmath We study the two-set feasibility problem of finding a point in the intersection $X\cap Y$ of closed convex sets in a Hilbert space. We propose a generalized composed alternating relaxed projection algorithm (\gcarp) that blends Douglas--Rachford-type and projection--reflection-type dynamics via an outer averaging step $\mu$ and an internal relaxation $(\gamma,\theta,\eta)$. The algorithm contains several classical projection methods as special cases. We also introduce its non-stationary variant,  in which $(\gamma_k,\theta_k,\eta_k)$ vary over iterations, and establish its convergence. For the subspace feasibility model, we derive an explicit spectral characterization via principal-angle block decompositions, yielding computable subdominant-eigenvalue factors and a minimax parameter-selection recipe in a symmetric regime that targets critical damping on principal-angle planes. Numerical experiments illustrate that the generalized relaxation and its non-stationary tuning can improve or match baseline methods in problem-dependent regimes.
\end{abstract}

\begin{keywords}
    Feasibility problem, Projection,  Relaxation, Nonstationary scheme, Friedrichs angle, Linear convergence
\end{keywords}

\section{Introduction}\label{sec:introduction}
	In this paper, we are interested in the following feasibility problem of two sets:
	\beq\label{eq:feasi_problem}
	\find\; \x\in \calH \ \ \textrm{such that}\ \ \x\in X\cap Y,
	\eeq
	where $\calH$ is a real Hilbert space, and $X,Y\subset \calH$ are nonempty closed convex sets satisfying
	$
	X\cap Y\neq\emptyset.
	$
	Feasibility problems of the form \eqref{eq:feasi_problem} arise in a wide range of applications, including signal and image processing \cite{combettes1996convex,bekri2025robust}, compressed sensing \cite{suantai2019new,van2009algorithm}, seismic data processing \cite{gao2013convergence,smarra2020learning}, and many others. Projection-based methods are among the most popular approaches for such problems, due to their simplicity and their strong convergence theory in Hilbert spaces.
	
	A fundamental method for \eqref{eq:feasi_problem} is the method of alternating projections (MAP) \cite{gubin1967method,bauschke1993convergence}. Over the years, numerous variants have been proposed to improve practical performance or to obtain sharper convergence properties, such as relaxed alternating projection (RAP) and partial relaxed alternating projection (PRAP) \cite{bauschke2016optimal}, averaged alternating modified reflections (AAMR) \cite{aragon2019optimal}, relaxed averaged alternating reflections (RAAR) \cite{luke2004relaxed}, simultaneous projection (SP) \cite{reich2017optimal}, and generalized relaxed alternating projection (GRAP) \cite{falt2017optimal,dao2018linear}. More recently, geometric acceleration ideas have also been explored, for instance via circumcentered-reflection type schemes \cite{arefidamghani2022circumcentered,behling2021circumcentered,behling2024successive}.
	
	Projection algorithms can also be viewed through the lens of operator splitting, since the projection is a special case of the proximity operator \cite{bauschke2020correction}. Indeed, MAP can be interpreted as an instance of the backward--backward splitting method \cite{Bauschke2005}. Moreover, several projection/reflection methods recover classical splittings such as Peaceman--Rachford \cite{peaceman1955numerical} and Douglas--Rachford \cite{douglas1956numerical} under suitable parameter choices; see, e.g., \cite{bauschke2014rate,bauschke2016optimal,aragon2019optimal} and references therein.
	
	The convergence theory of projection-based fixed-point iterations has also been extensively developed. Many algorithms can be written as Krasnosel'ski\u{\i}--Mann iterations of (firmly) nonexpansive or averaged operators, enabling global convergence guarantees and rate estimates; see, e.g., \cite{liang2016convergence}. Under additional regularity such as partial smoothness, one can obtain local linear convergence \cite{Hare2007,Lewis2009,liang2017localDR}. For nonconvex feasibility, convergence properties of Douglas--Rachford type schemes have been studied as well; see, e.g., \cite{Li2016}. In the special case where $X$ and $Y$ are subspaces, sharp linear rates  can be characterized in terms of the Friedrichs angle \cite{bauschke2014rate,Bauschke2016,aragon2019optimal}; see also \cite{Dao2019,Bauschke2023,Bauschke2024} for further developments.

    \subsection{Motivation and contribution}	
	When $X$ and $Y$ are two subspaces, both MAP and Douglas--Rachford (DR) converge linearly, and the rate is governed by the Friedrichs angle between $X$ and $Y$ \cite{bauschke2014rate,bauschke2016optimal}. However, despite its appealing theory, the DR fixed-point sequence may exhibit a pronounced spiraling trajectory \cite{poon2019trajectory}, which can lead to slow practical progress on subspace feasibility problems. This geometric phenomenon motivates algorithmic designs that retain the favorable nonexpansive structure of DR while mitigating its spiral behavior.
	
	Recently, Shen et al. \cite{shen2025composed} proposed the composed alternating relaxed projection algorithm (\carp), which blends DR-type composition with a projection-reflection correction, and demonstrated significant speedup over DR on several feasibility models. In this work, we further generalize \carp by introducing relaxed reflections with parameters $\theta,\eta\in(0,1]$, leading to a generalized composed alternating relaxed projection algorithm (\gcarp), given in Algorithm \ref{alg:gcarp}. Concretely, we use
	\[
	\mathcal R_X^\theta := (1-\theta)\Id + \theta(2\proj_X-\Id),
	\qquad
	\mathcal R_Y^\eta := (1-\eta)\Id + \eta(2\proj_Y-\Id),
	\]
    where $\proj_X$ and $\proj_Y$ denote the orthogonal projections onto $X$ and $Y$, and $\Id$ is the identity operator. When $\theta=\eta=1$, the operators $\mathcal R_X^\theta$ and $\mathcal R_Y^\eta$ reduce to the standard reflections, in which case \gcarp coincides with \carp in \cite{shen2025composed}. The additional parameters $\theta$ and $\eta$ provide a flexible interpolation between projection steps and reflection steps, and allow us to unify several classical schemes within a single template. For instance, the \gcarp update can be written as a relaxed fixed-point iteration
	\[
	\z^{k+1}=(1-\mu)\z^k+\mu\,\opF^{\theta,\eta}_{\gamma}(\z^k),
	\]
	where the operator $\opF^{\theta,\eta}_{\gamma}$ is a convex combination of a DR-type averaging term and a projection term built from $\proj_Y\circ \mathcal R_X^\theta$. As special cases, one may recover relaxed DR (e.g., $\gamma=0$), and relaxed alternating projections (e.g., $\gamma=1$ and $\mu=1$), highlighting \gcarp as a unifying algorithm.
	
	The main contributions of this paper are summarized as follows.
	\begin{enumerate}
	    \item[\ding{118}] We propose the generalized composed alternating relaxed projection algorithm for the convex feasibility problem \eqref{eq:feasi_problem}. 	Compared with \carp, the additional relaxation parameters $(\theta,\eta)$ in \gcarp provide extra
		degrees of freedom to control the balance between spiraling and damping in the
		fixed-point dynamics. We establish its fixed-point characterization and global convergence under suitable parameter choices. We also introduce and analyze a non-stationary variant.
		\item[\ding{118}] For the feasibility problem of two subspaces, we derive explicit linear convergence rates of \gcarp and identify parameter regimes that yield provably optimal rates. In particular, when $\theta=\eta=1$, our results specialize to the \carp setting of \cite{shen2025composed}.
		\item[\ding{118}] We provide numerical experiments on representative feasibility models, including subspace feasibility and additional structured examples, to illustrate how the extra relaxation parameters $\theta,\eta$ can improve robustness and performance compared with existing projection/reflection methods.
	\end{enumerate}
    
	\subsection{Paper organization}
	The remainder of this paper is organized as follows. Section~\ref{sec:maths_bg} collects notation and preliminary results. In Section~\ref{sec:gcarpa}, we present \gcarp and its non-stationary extension, together with the convergence analysis. Section~\ref{sec:numerics} reports numerical experiments and comparisons with several related methods.

    \section{Mathematical background}
	\label{sec:maths_bg}
	
	Throughout, let $\calH$ be a real Hilbert space endowed with inner product $\iprod{\cdot}{\cdot}$ and the induced norm $\norm{\cdot}$.
	We write $\Id$ for the identity operator on $\calH$. 
	When we specialize to the Euclidean setting, $\bbR^{n}$ denotes the $n$-dimensional real vector space equipped with the standard inner product.
	
	We use bold uppercase (lowercase) letters to denote matrices (column vectors) and calligraphic letters to represent operators. Given a matrix $\A\in\bbR^{m\times n}$, we denote by $\Ker(\A)$, $\ran(\A)$, and $\rank(\A)$ the null space, range, and rank of $\A$, respectively. We use $\mathrm{span}(\cdot)$ to denote the linear span of a set of vectors. We denote by $\mathrm{card}(\cdot)$ the cardinality of a finite set, i.e., the number of elements in the set.
	
	\paragraph{Operators.}
	We recall several notions on nonexpansive mappings and projection-type operators; see \cite[Chapter~4]{bauschke2020correction} for further background.
	
	\begin{definition}[Nonexpansive and averaged operators]\label{def:nonexpansive-operator}
		An operator $\calF:\calH\to\calH$ is \emph{nonexpansive} if
		\[
		\norm{\calF(\x)-\calF(\x')}\le \norm{\x-\x'} \qquad \forall\,\x,\x'\in\calH.
		\]
		For $\alpha\in(0,1)$, the operator $\calF$ is called \emph{$\alpha$-averaged} if there exists a nonexpansive mapping $\calN:\calH\to\calH$ such that
		\[
		\calF = (1-\alpha)\Id + \alpha \calN .
		\]
		When $\alpha=\tfrac12$, $\calF$ is also called \emph{firmly nonexpansive}; in this case $2\calF-\Id$ is nonexpansive.
	\end{definition}

    \begin{definition}[Fixed point]
        A point $\x\in\calH$ is a \emph{fixed point} of $\calF$ if $\x=\calF(\x)$.
	   We denote the fixed-point set by
	\[
	\Fix(\calF):=\{\x\in\calH:\ \x=\calF(\x)\},
	\]
	which may be empty.
    \end{definition}
	
	\begin{definition}[Projection]\label{def:projection}
		Let $X\subset\calH$ be nonempty, closed, and convex.
		The \emph{projection} onto $X$ is
		\[
		\proj_X(\x)\eqdef \argmin_{\y\in X}\ \norm{\x-\y}\qquad \forall\,\x\in\calH.
		\]
		It is single-valued and firmly nonexpansive \cite[Proposition 4.2]{bauschke2020correction}.
	\end{definition}
	
	\begin{definition}[Reflection and relaxed reflection]\label{def:relaxed-reflection}
		Let $X\subset\calH$ be nonempty, closed, and convex.
		The \emph{reflection} with respect to $X$ is defined by
		\[
		\calR_X \eqdef 2\proj_X-\Id.
		\]
		For $\theta\in[0,1]$, the \emph{relaxed reflection} is
		\[
		\calR_X^{\theta}\eqdef (1-\theta)\Id+\theta\,\calR_X
		= (1-\theta)\Id+\theta(2\proj_X-\Id).
		\]
		Equivalently, letting $r:=2\theta-1\in[-1,1]$, one can write
		\[
		\calR_X^{\theta} = (1+r)\proj_X - r\Id .
		\]
        Moreover, $\calR_X^{\theta}$ is nonexpansive, since it is a convex combination of $\Id$ and the nonexpansive mapping $\calR_X$. When $\theta\in(0,1)$, it is in fact $\theta$-averaged.

        For later use, we also set
    	\[
    	\calR_{Y}:=2\proj_{Y}-\Id,
    	\qquad
    	\calR_Y^{\eta}:=(1-\eta)\Id+\eta\,\calR_Y, \quad \eta\in[0,1].
    	\]
	\end{definition}
	
	\begin{remark}[A useful identity for subspaces]\label{rem:reflection-subspace-identity}
		If $X$ is a closed subspace of $\calH$, then its orthogonal complement is
		\[
		X^\perp := \{\uu\in\calH:\ \iprod{\uu}{\x}=0\ \ \forall \x\in X\}.
		\]
		In this case, $\calH=X\oplus X^\perp$ and
		\[
		\proj_{X^\perp}=\Id-\proj_X.
		\]
		Consequently,
		\[
		2\proj_X-\Id \;=\; \proj_X-\proj_{X^\perp}.
		\]
	\end{remark}
	
	\paragraph{Angles between subspaces.}
	Let $X,Y\subset\bbR^n$ be subspaces. We recall the principal angles and the Friedrichs angle; see \cite{bauschke2014rate} for more details.
	
	\begin{definition}[Principal angles]\label{def:principal-angles}
		Let $p:=\min\{\dim(X),\dim(Y)\}$.
		The principal angles $\theta_k\in\left[0,\frac{\pi}{2}\right]$, $k=1,\ldots,p$, between $X$ and $Y$ are recursively defined by
		\[
		\cos(\theta_k)
		=\max\left\{\iprod{\uu}{\vv}\ \middle|\ 
		\begin{array}{l}
			\uu\in X,\ \vv\in Y,\ \|\uu\|=\|\vv\|=1,\\
			\iprod{\uu}{\uu_j}=0,\ \iprod{\vv}{\vv_j}=0,\ j=1,\ldots,k-1
		\end{array}\right\},
		\]
		where $\uu_j\in X$ and $\vv_j\in Y$ are maximizers chosen at previous steps.
		The vectors $\uu_k,\vv_k$ need not be unique, but the angles are uniquely determined and satisfy
		$0\le \theta_1\le\cdots\le\theta_p\le \frac{\pi}{2}$.
	\end{definition}
	
	\begin{definition}[Friedrichs angle]\label{def:friedrichs-angle}
		The cosine of the Friedrichs angle between $X$ and $Y$ is defined by
		\begin{align*}
		  &\cF(X,Y) = \cos(\tF)\\
		&:= \sup\Bigl\{ |\iprod{\uu}{\vv}| \ \Big|\ 
		\uu\in X\cap(X\cap Y)^\perp,\ 
		\vv\in Y\cap(X\cap Y)^\perp,\ 
		\|\uu\|,\|\vv\|\le1
		\Bigr\}.  
		\end{align*}
		When there is no ambiguity, we write $\cF$ for $\cF(X,Y)$.
	\end{definition}
	
	\begin{proposition}[Principal angles and the Friedrichs angle]
		Let $s:=\dim(X\cap Y)$. Then $\theta_k=0$ for $k=1,\ldots,s$, and $\theta_{s+1}=\tF>0$.
	\end{proposition}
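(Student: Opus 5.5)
We prove the two claims separately, working in $\bbR^n$ with $X,Y$ subspaces and $M:=X\cap Y$, $s=\dim M$. The main tool is a reduction of the principal-angle recursion in Definition~\ref{def:principal-angles} to the pair $X\cap M^\perp$, $Y\cap M^\perp$.

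\textbf{Step 1 (the first $s$ angles vanish).} We show by induction on $k\le s$ that, for some valid choice of previous maximizers, $\cos\theta_k=1$ is attained with $\uu_k=\vv_k\in M$, and that $\uu_1,\dots,\uu_k$ is an orthonormal set in $M$. Suppose $\uu_j=\vv_j\in M$ for $j<k$ are orthonormal. Since $k-1<s=\dim M$, there is a unit $\ww\in M$ orthogonal to $\uu_1,\dots,\uu_{k-1}$. The pair $\uu=\vv=\ww$ is feasible in the max, and by Cauchy--Schwarz $\iprod{\uu}{\vv}\le 1$ always. Hence $\cos\theta_k=1$ and $\theta_k=0$. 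Because the angles do not depend on the choice of maximizers, we may fix this choice for $j=1,\dots,s$, so $\mathrm{span}\{\uu_1,\dots,\uu_s\}=M$ and $\vv_j=\uu_j$.

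\textbf{Step 2 (the reduced problem at step $s+1$).} With these maximizers, the constraints at step $s+1$ are $\uu\in X$, $\uu\perp M$ and $\vv\in Y$, $\vv\perp M$. So
\[
\cos\theta_{s+1}=\max\{\iprod{\uu}{\vv}:\ \uu\in X\cap M^\perp,\ \vv\in Y\cap M^\perp,\ \norm{\uu}=\norm{\vv}=1\}.
\]
We compare this with $\cF$.
\begin{itemize}
\item Replacing $\uu$ by $-\uu$ shows that the sup of $|\iprod{\uu}{\vv}|$ over unit vectors equals this max.
\item For $\norm{\uu},\norm{\vv}\le1$, bilinearity lets us scale up to unit vectors, which can only increase $|\iprod{\uu}{\vv}|$. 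If $\uu=0$ or $\vv=0$ the value is $0\le$ the unit-vector sup.
\end{itemize}
Therefore $\cos\theta_{s+1}=\cF$. This identification also needs $s+1\le p$, which is required anyway for $\theta_{s+1}$ to be defined: it is the case where both $X\cap M^\perp$ and $Y\cap M^\perp$ are nonzero.

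\textbf{Step 3 (strict positivity, the key point).} It remains to show $\cF<1$. The maximum is attained by compactness of the unit spheres in finite dimension. If it equalled $1$, the equality case of Cauchy--Schwarz would force $\uu=\vv$. Then $\uu\in X\cap Y\cap M^\perp=M\cap M^\perp=\{0\}$, which contradicts $\norm{\uu}=1$. Hence $\cos\theta_{s+1}<1$, so $\theta_{s+1}=\tF>0$.

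The main obstacle is the use of finite dimensionality to attain the maximum, which is why the setting is $\bbR^n$. A secondary point is justifying that the maximizers from Step 1 may be taken to span $M$. We handle this by appealing to the stated independence of the angles from the choice of maximizers.
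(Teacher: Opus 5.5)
Your proof is correct. The paper does not prove this proposition. It is recalled as background from the literature cited before Definition~\ref{def:principal-angles}, so your argument is a self-contained derivation from the variational definitions rather than a variant of one in the text.

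For comparison, a common route in the literature goes through the SVD of $Q_X^\top Q_Y$, where $Q_X,Q_Y$ are orthonormal bases of $X,Y$. There the $\cos\theta_k$ are its singular values, so uniqueness of the angles is automatic and exactly $s$ of them equal $1$. Moreover $\cF=\norm{\proj_X\proj_Y-\proj_{X\cap Y}}$ is the first singular value below $1$. Your route avoids that machinery and uses only Cauchy--Schwarz and compactness. It also shows that finite dimensionality is needed precisely for attainment in Step~3: in infinite-dimensional Hilbert spaces one can have $\cF=1$ with $X\cap Y=\{0\}$.

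One simplification: you do not need the asserted independence of the angles from the choice of maximizers. The Cauchy--Schwarz equality case you already use in Step~3 shows that any admissible maximizer pair with $\iprod{\uu_k}{\vv_k}=1$ satisfies $\uu_k=\vv_k\in M$. Your induction therefore shows that for \emph{every} admissible choice the first $s$ maximizers form an orthonormal basis of $M$ with $\vv_j=\uu_j$. Hence the reduction in Step~2 holds without any appeal to uniqueness.
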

	
	\paragraph{Block-diagonal matrices.}
	For square matrices $\A_1,\ldots,\A_m$, we write
	\[
	\mathrm{blkdiag}(\A_1,\ldots,\A_m)
	:=
	\begin{bmatrix}
		\A_1 &        &        & \bm{0}\\
		& \A_2    &        &  \\
		&        & \ddots &  \\
		\bm{0}   &        &        & \A_m
	\end{bmatrix},
	\]
	i.e., the block-diagonal matrix whose diagonal blocks are $\A_1,\ldots,\A_m$ and whose off-diagonal blocks are zero.
	In particular, if $\A_i=\lambda_i \bmI_{d_i}$, then $\mathrm{blkdiag}(\lambda_1 \bmI_{d_1},\ldots,\lambda_m \bmI_{d_m})$
	is diagonal with $\lambda_i$ repeated $d_i$ times.
	
	\paragraph{Convergence rate for matrix iterations.}
	For $\A\in\bbR^{n\times n}$, we denote by $\sigma(\A)$ and $\rho(\A)$ the spectrum and spectral radius, respectively.
	We say that $\A$ converges linearly to $\A^\infty$ if there exist $r\in[0,1)$ and constants $M>0$ and $N\in\bbN$ such that
	\[
	\|\A^k-\A^\infty\|\le Mr^k \qquad \forall\,k\ge N.
	\]
	An eigenvalue $\lambda\in\sigma(\A)$ is \emph{semisimple} if
	\[
	\Ker(\A-\lambda\bmI)=\Ker\bigl((\A-\lambda\bmI)^2\bigr).
	\]
	We define the \emph{subdominant eigenvalue modulus} by
	\[
	\xi(\A):=\max\Bigl\{|\lambda|:\ 
    \lambda\in\{0\}\cup\bigl(\sigma(\A)\setminus\{1\}\bigr)
    \Bigr\}.
	\]

    \begin{fact}[{\cite[Fact~2.4]{bauschke2016optimal}}]
    Let $\A\in\bbR^{n\times n}$. Then $\lim_{k\to\infty}\A^k$ exists if and only if
    every eigenvalue $\lambda\in\sigma(\A)$ satisfies $|\lambda|<1$, except possibly $\lambda=1$, and if $1\in\sigma(\A)$, then it is semisimple.
    \end{fact}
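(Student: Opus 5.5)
The statement to prove is the Fact: for $\A\in\bbR^{n\times n}$, the limit $\lim_{k\to\infty}\A^k$ exists if and only if every eigenvalue has modulus $<1$, except possibly $\lambda=1$, and $\lambda=1$ (if present) is semisimple. The plan is to pass to the Jordan canonical form over $\mathbb{C}$. Write $\A=\bmP\bm{J}\bmP^{-1}$ with $\bm{J}=\mathrm{blkdiag}(\bm{J}_1,\ldots,\bm{J}_m)$, where each $\bm{J}_i=\lambda_i\bmI+\bm{N}_i$ is a Jordan block of size $d_i$ and $\bm{N}_i$ is the nilpotent shift. Since $\A^k=\bmP\bm{J}^k\bmP^{-1}$ and conjugation by a fixed invertible matrix is a homeomorphism of the matrix space, $\lim_k\A^k$ exists if and only if $\lim_k\bm{J}_i^k$ exists for every $i$. (Real versus complex entries is harmless: a limit of real matrices is real.) So the question reduces to a single Jordan block.

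For one block, use the binomial expansion, valid because $\lambda\bmI$ and $\bm{N}$ commute:
\[
\bm{J}^k=\sum_{j=0}^{d-1}\binom{k}{j}\lambda^{k-j}\bm{N}^j .
\]
Entry $(1,1+j)$ of $\bm{J}^k$ is exactly $\binom{k}{j}\lambda^{k-j}$. Treat the cases as follows.
\begin{itemize}
\item If $|\lambda|<1$: each term $\binom{k}{j}|\lambda|^{k-j}$ is a polynomial in $k$ times a geometric factor, so it tends to $0$ and $\bm{J}^k\to\bm{0}$.
\item If $|\lambda|>1$: the diagonal entry $\lambda^k$ diverges.
\item If $|\lambda|=1$ and $\lambda\neq1$: $\lambda^k$ lies on the unit circle and does not converge, since $\lambda^{k+1}-\lambda^k=\lambda^k(\lambda-1)$ has constant nonzero modulus.
\item If $\lambda=1$ and $d=1$: the block is constant, so the limit exists.
\item If $\lambda=1$ and $d\ge2$: the entry $(1,2)$ equals $k$, which diverges.
\end{itemize}

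It remains to link the last two cases to semisimplicity. The eigenvalue $1$ is semisimple, i.e.\ $\Ker(\A-\bmI)=\Ker((\A-\bmI)^2)$, exactly when every Jordan block for $\lambda=1$ has size $1$. To see this, note both kernels are invariant under conjugation and decompose blockwise. On a block $\bm{J}_i=\bmI+\bm{N}_i$ of size $d_i$, $\Ker(\bm{N}_i)$ has dimension $1$ while $\Ker(\bm{N}_i^2)$ has dimension $\min(2,d_i)$. On blocks with $\lambda_i\neq1$, both $\bm{J}_i-\bmI$ and its square are invertible. Hence the two kernels coincide if and only if all $d_i=1$ for $\lambda_i=1$.

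Combining the single-block analysis with this equivalence gives both directions of the Fact. The only mildly delicate point is justifying the reduction to Jordan blocks, namely that convergence of $\A^k$ is equivalent to convergence of each block. This follows from continuity of $\bm{M}\mapsto\bmP\bm{M}\bmP^{-1}$ and its inverse, together with the block-diagonal structure of $\bm{J}^k$. Everything else is an elementary computation on entries.
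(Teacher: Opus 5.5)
Your Jordan-form argument is correct and complete. The paper gives no proof of this Fact and simply quotes it from \cite{bauschke2016optimal}; your blockwise computation of $\bm{J}_i^k$, together with the identification of semisimplicity with all Jordan blocks for $\lambda=1$ having size $1$, is the standard proof of this matrix-analysis result. One detail is worth stating explicitly. The paper defines semisimplicity by a kernel condition over $\bbR^n$, while your computation takes kernels over $\mathbb{C}$. The two conditions agree because $\Ker(\A-\bmI)\subseteq\Ker((\A-\bmI)^2)$ always holds, and the real matrices $\A-\bmI$ and $(\A-\bmI)^2$ have the same rank over $\bbR$ and over $\mathbb{C}$.
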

	
	\begin{theorem}[{\cite[Theorem~2.12 and Theorem~2.15]{bauschke2016optimal}}]
		Suppose that $\A\in\bbR^{n\times n}$ is convergent to $\A^\infty\in\bbR^{n\times n}$.
		Then $\xi(\A)=\rho(\A-\A^\infty)<1$ and
		\[
		(\A-\A^\infty)^k=\A^k-\A^\infty\qquad \forall\,k\in\bbN.
		\]
		Moreover:
		\begin{itemize}
			\item[(i)] $\A$ is linearly convergent with any rate $r\in(\xi(\A),1)$;
			\item[(ii)] $\xi(\A)$ is the optimal linear rate if and only if all subdominant eigenvalues are semisimple.
		\end{itemize}
	\end{theorem}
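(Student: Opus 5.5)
The statement to be proved is the cited result: if $\A$ converges to $\A^\infty$, then $\xi(\A)=\rho(\A-\A^\infty)<1$ and $(\A-\A^\infty)^k=\A^k-\A^\infty$; moreover (i) $\A$ converges linearly with any rate $r\in(\xi(\A),1)$, and (ii) $\xi(\A)$ is the optimal rate if and only if all subdominant eigenvalues are semisimple. The plan is to work with the Jordan decomposition of $\A$, using the preceding Fact.

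\textbf{Step 1: the limit as a spectral projector.} Since $\lim_k\A^k$ exists, the Fact gives two things. Every eigenvalue other than $1$ has modulus less than $1$. If $1\in\sigma(\A)$, it is semisimple.

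Write $\A=\bm{S}\,\mathrm{blkdiag}(\bmI_m,\bm{J})\bm{S}^{-1}$, where $\bm{J}$ collects the Jordan blocks of the eigenvalues $\lambda\neq1$ (with $m=0$ if $1\notin\sigma(\A)$). Then $\bm{J}^k\to\bm{0}$ because $\rho(\bm{J})<1$. Hence
\[
\A^\infty=\bm{S}\,\mathrm{blkdiag}(\bmI_m,\bm{0})\bm{S}^{-1},
\]
which is the spectral projector onto $\Ker(\A-\bmI)$ along the sum of the other generalized eigenspaces.

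\textbf{Step 2: the algebraic identities.} From the block form, $\A\A^\infty=\A^\infty\A=\A^\infty=(\A^\infty)^2$. Expanding binomially, every mixed term collapses to $\pm\A^\infty$, and the alternating sum of binomial coefficients gives $(\A-\A^\infty)^k=\A^k-\A^\infty$.

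Directly from the block form,
\[
\A-\A^\infty=\bm{S}\,\mathrm{blkdiag}(\bm{0},\bm{J})\bm{S}^{-1}.
\]
So $\sigma(\A-\A^\infty)=\{0\}\cup(\sigma(\A)\setminus\{1\})$, with the $0$ present whenever $m>0$. Taking moduli yields $\rho(\A-\A^\infty)=\xi(\A)<1$.

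\textbf{Step 3: part (i).} This follows from Gelfand's formula, $\|\bm{B}^k\|^{1/k}\to\rho(\bm{B})$, applied to $\bm{B}=\A-\A^\infty$. For $r>\xi(\A)$ we get $\|\A^k-\A^\infty\|=\|\bm{B}^k\|\le r^k$ for all large $k$, so the constants can be taken as $M=1$ and a suitable $N$.

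\textbf{Step 4: part (ii), the main obstacle.} This is where care is needed, in particular in pinning down what "optimal" means. I read it as: $\xi(\A)$ itself is an admissible rate, i.e. there is $M$ with $\|\bm{B}^k\|\le M\xi^k$. The degenerate case $\xi=0$ needs separate treatment: there $\bm{B}$ is nilpotent, and the convention must be handled so that the equivalence still holds.

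For $\xi>0$, compute $\bm{J}^k$ block by block. A Jordan block of size $d$ for an eigenvalue $\lambda$ has entries $\binom{k}{j}\lambda^{k-j}$ for $0\le j<d$.
\begin{itemize}
\item Blocks with $|\lambda|<\xi$ are $o(\xi^k)$.
\item Blocks with $|\lambda|=\xi$ are $O(\xi^k)$ exactly when $d=1$, that is, when $\lambda$ is semisimple.
\item If some such block has $d\ge2$, its $(1,2)$ entry is $k\lambda^{k-1}$. Therefore $\|\bm{B}^k\|/\xi^k\ge c\,k\to\infty$, and no constant $M$ can work.
\end{itemize}
Combining the two directions gives the equivalence. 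Conversely, no rate $r<\xi$ is possible in any case: for an eigenvector $\bm{v}$ with $|\lambda|=\xi$ we have $\|\bm{B}^k\bm{v}\|=\xi^k\|\bm{v}\|$. This confirms that $\xi$ is the infimum of admissible rates.
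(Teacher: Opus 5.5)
The paper contains no proof of this statement; it is quoted from \cite{bauschke2016optimal}. Your Jordan-form argument is the standard spectral proof: the limit as the spectral projector onto $\Ker(\A-\bmI)$, the commutation identities, Gelfand's formula, and the growth of Jordan blocks. It correctly proves $\xi(\A)=\rho(\A-\A^\infty)<1$, part (i), and part (ii) whenever $\xi(\A)>0$.

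A minor point in Step 2: collapsing the binomial sum uses $\sum_{j=0}^{k}\binom{k}{j}(-1)^{k-j}=0$, which holds only for $k\ge1$. If $\bbN$ contains $0$, the identity at $k=0$ reads $\bmI=\bmI-\A^\infty$, which fails whenever $\A^\infty\neq\bm 0$. So state it for $k\ge1$.

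The genuine gap is the case $\xi(\A)=0$, which you defer to an unspecified ``convention''. The paper's definition of linear convergence only requires $\|\A^k-\A^\infty\|\le Mr^k$ for $k\ge N$. Under that definition no convention rescues the ``only if'' direction of (ii), because it is false there. Take $\A=\begin{bmatrix}0&1\\0&0\end{bmatrix}$. Then $\A^\infty=\bm 0$, $\xi(\A)=0$, and $\|\A^k-\A^\infty\|=0\le 1\cdot 0^k$ for all $k\ge2$. So $\xi(\A)$ is an attained, hence optimal, rate. Yet the subdominant eigenvalue $0$ is not semisimple, since $\Ker(\A)\neq\Ker(\A^2)$.

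More generally, $\xi(\A)=0$ forces $\bm{B}=\A-\A^\infty$ to be nilpotent, so the rate $0$ is always attained with $N=n$. Your $\xi>0$ argument breaks at exactly this point. The lower bound $\|\bm B^k\|/\xi^k\ge ck$ divides by $\xi$, and for $\lambda=0$ the entry $k\lambda^{k-1}$ vanishes once $k\ge2$.

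To complete the proof you must make one of two choices:
\begin{itemize}
\item add the hypothesis $\xi(\A)>0$ to (ii), where your Jordan-block argument is complete; or
\item require the rate bound for every $k\ge1$.
\end{itemize}
Under the second convention, $\xi(\A)=0$ is attained if and only if $\bm B=\bm 0$, i.e.\ $\bm J=\bm 0$, i.e.\ the eigenvalue $0$ (if present) is semisimple. That closes the case in one line. As written, the step is left open, and under the paper's stated definition it cannot be closed.
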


\section{A generalized \carp}\label{sec:gcarpa}
	
	This section is devoted to the formulation and analysis of our proposed algorithm.
	Our starting point is the \carp scheme of \cite{shen2025composed}, which blends a DR-type operator with an additional projection-type correction and achieves substantial acceleration on subspace feasibility problems. We further generalize \carp by introducing relaxed reflections on both sets, leading to a more flexible family that (i) contains \carp as a special case, (ii) unifies several classical projection/reflection methods, and (iii) provides extra degrees of freedom that can improve robustness in practice.
	We first present the stationary algorithm and establish its fixed-point formulation, averagedness, and convergence.
	A non-stationary variant is discussed afterwards.
	
	\subsection{The proposed algorithm and operator viewpoint}
	
	Motivated by the spiraling behavior of the DR fixed-point sequence on locally polyhedral problems \cite{poon2020geometry}, we propose the generalized composed alternating relaxed projection algorithm (\gcarp), summarized in Algorithm \ref{alg:gcarp}.
	
        \begin{algorithm}[t]
    \caption{A generalized composed alternating relaxed projection algorithm (\gcarp)}
    \label{alg:gcarp}
    
    \textbf{Initial:} $\z^0\in\calH$; choose $\gamma\in[0,1)$,
    $\mu\in(0,2/(1+\gamma))$, and $\theta,\eta\in(0,1]$;
    
    \Repeat{convergence}{
        $\x^{k+1}=\proj_X(\z^k)$\;
        $\uu^{k+1}=\mathcal R_X^{\theta}(\z^k)=(1-\theta)\z^k+\theta(2\x^{k+1}-\z^k)$\;
        $\y^{k+1}=\proj_Y(\uu^{k+1})$\;
        $\vv^{k+1}=\mathcal R_Y^{\eta}(\uu^{k+1})=(1-\eta)\uu^{k+1}+\eta(2\y^{k+1}-\uu^{k+1})$\;
        $\z^{k+1}=(1-\mu)\z^k+\mu\big((1-\gamma)\tfrac12(\z^k+\vv^{k+1})+\gamma \y^{k+1}\big)$\;
    }
    \end{algorithm}

	\paragraph{Fixed-point operators.}
	From Algorithm~\ref{alg:gcarp}, we have
	\[
	\uu^{k+1}=\mathcal R_X^\theta \z^k,\qquad
	\y^{k+1}=\proj_Y \mathcal R_X^\theta \z^k,\qquad
	\vv^{k+1}=\mathcal R_Y^\eta \mathcal R_X^\theta \z^k.
	\]
	Hence,
	\[
	\tfrac12(\z^k+\vv^{k+1})=\tfrac12\bigl(\Id+\mathcal R_Y^\eta \mathcal R_X^\theta\bigr)\z^k,
	\qquad
	\y^{k+1}=(\proj_Y\mathcal R_X^\theta)\z^k.
	\]
	This motivates defining
		\begin{equation}\label{eq:fcarp}
		\calA^{\theta,\eta}
		:=\tfrac12\bigl(\Id+\mathcal R_Y^\eta \mathcal R_X^\theta\bigr),
		\qquad
		\calB^\theta
		:=\proj_Y\mathcal R_X^\theta,
		\end{equation}
	together with the \carp-type convex combination
	\begin{equation}\label{eq:G-def}
		\calF_\gamma^{\theta,\eta}:=(1-\gamma)\calA^{\theta,\eta}+\gamma \calB^\theta,
		\qquad
		\calF_{\mu,\gamma}^{\theta,\eta}:=(1-\mu)\Id+\mu \calF_\gamma^{\theta,\eta}.
	\end{equation}
	With these definitions, Algorithm~\ref{alg:gcarp} is exactly the fixed-point iteration
	\[
	\z^{k+1}=\calF_{\mu,\gamma}^{\theta,\eta}(\z^k).
	\]
	
	\begin{remark}[Relation with \carp and other methods]\label{rem:relations}
		We next clarify how \gcarp relates to the \carp framework of \cite{shen2025composed} and how several classical schemes are recovered as special cases. When $\theta=\eta=1$, the relaxed reflections reduce to the standard reflections, namely,
		\[
		\mathcal R_X^\theta=\mathcal R_X, \qquad \mathcal R_Y^\eta=\mathcal R_Y.
		\]
		Consequently,
		\[
		\calA^{\theta,\eta}=\tfrac12(\Id+\mathcal R_Y\mathcal R_X),
		\qquad
		\calB^\theta=\proj_Y\mathcal R_X,
		\]
		and Algorithm~\ref{alg:gcarp} reduces exactly to the \carp scheme proposed in \cite{shen2025composed}. 
		
		Under the same choice $\theta=\eta=1$, the parameter $\gamma$ continuously interpolates between two endpoint operators. On the one hand, when $\gamma=0$, the operator $\calF_\gamma^{\theta,\eta}$ becomes
		\[
		\tfrac12(\Id+\mathcal R_Y\mathcal R_X),
		\]
		which is precisely the DR operator. On the other hand, when $\gamma=1$, one obtains
		\[
		\calF_\gamma^{\theta,\eta}=\proj_Y\mathcal R_X.
		\]
		Therefore, within the \carp setting, the parameter $\gamma$ controls the extent to which the iteration moves away from the pure DR-type mechanism toward a more projection-oriented correction.
		
		Moreover, \gcarp also recovers the classical method of alternating projections. Indeed, if $\theta=\tfrac12$, then $\mathcal R_X^\theta=\proj_X$. Choosing in addition $\gamma=1$ and $\mu=1$ gives
		\[
		\z^{k+1}=\proj_Y\proj_X \z^k,
		\]
		which is exactly the standard alternating projection iteration. In this case, the resulting scheme is independent of $\eta$.
	\end{remark}

\subsection{Convergence of the \gcarp iteration}
	A key point in our design is that the desired fixed points coincide with the feasible set $X\cap Y$.
	We start from a standard identity for alternating projections.
	
	\begin{lemma}\label{lem:fix-apy}
		Assume $X\cap Y\neq\emptyset$. Then $\Fix(\proj_Y\proj_X)=X\cap Y$.
	\end{lemma}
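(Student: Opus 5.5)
The inclusion $X\cap Y\subseteq\Fix(\proj_Y\proj_X)$ is immediate. If $\x\in X\cap Y$, then $\proj_X\x=\x$ because $\x\in X$, and then $\proj_Y\x=\x$ because $\x\in Y$. Hence $\proj_Y\proj_X\x=\x$. All the work lies in the reverse inclusion, and I will prove it using the firm nonexpansiveness of projections from Definition~\ref{def:projection}, together with a fixed anchor point $\p\in X\cap Y$, which exists by assumption.

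Let $\x\in\Fix(\proj_Y\proj_X)$ and write $\w:=\proj_X\x$, so that $\x=\proj_Y\w$. Since $\p\in X\cap Y$, we have $\proj_X\p=\p$ and $\proj_Y\p=\p$. Firm nonexpansiveness of a projection $\proj_C$ onto a closed convex set $C$ gives, for any $\z$,
\[
\norm{\proj_C\z-\p}^2+\norm{(\z-\proj_C\z)}^2\le\norm{\z-\p}^2 \qquad\text{whenever } \p\in C.
\]
Applying this first with $C=X$ and $\z=\x$, and then with $C=Y$ and $\z=\w$, yields
\[
\norm{\w-\p}^2+\norm{\x-\w}^2\le\norm{\x-\p}^2,\qquad
\norm{\x-\p}^2+\norm{\w-\x}^2\le\norm{\w-\p}^2 .
\]
Adding the two inequalities and cancelling the common terms gives $2\norm{\x-\w}^2\le 0$, so $\x=\w$. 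Consequently $\x=\proj_X\x\in X$, and also $\x=\proj_Y\w\in Y$, which shows $\x\in X\cap Y$.

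The key step is the inequality $\norm{\proj_C\z-\p}^2+\norm{\z-\proj_C\z}^2\le\norm{\z-\p}^2$ for $\p\in C$. It follows from the variational characterization $\iprod{\z-\proj_C\z}{\p-\proj_C\z}\le 0$ by expanding $\norm{\z-\p}^2=\norm{(\z-\proj_C\z)+(\proj_C\z-\p)}^2$. Alternatively, it can be taken directly as the firm nonexpansiveness inequality applied to the pair $(\z,\p)$. This is the only place where convexity enters. It is also the step where the hypothesis $X\cap Y\neq\emptyset$ is genuinely used: without a common anchor $\p$, fixed points of $\proj_Y\proj_X$ can lie outside $X\cap Y$, as happens for disjoint sets at best-approximation pairs. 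I do not expect any further obstacle beyond verifying this inequality cleanly.
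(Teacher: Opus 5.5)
Your proof is correct and takes essentially the paper's route. Your inequality $\|\proj_C\z-\p\|^2+\|\z-\proj_C\z\|^2\le\|\z-\p\|^2$ is just the variational inequality $\langle \z-\proj_C\z,\ \p-\proj_C\z\rangle\le 0$ rewritten, so adding your two instances is the same as the paper combining the two projection inequalities at a common point of $X\cap Y$, which forces the fixed point to equal its projection onto $X$.
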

	
	\begin{proof}
		Let $\z\in\Fix(\proj_Y\proj_X)$ and set $\x=\proj_X \z$. Then $\z=\proj_Y \x\in Y$.
		Take any $\w\in X\cap Y$.
		From $\x=\proj_X \z$ we have $\langle \z-\x,\, \w-\x\rangle\le 0$,
		and from $\z=\proj_Y \x$ we have $\langle \x-\z,\, \w-\z\rangle\le 0$, i.e.\ $\langle \z-\x,\, \w-\z\rangle\ge 0$.
		Since $\w-\x=(\w-\z)+(\z-\x)$,
		\[
		\langle \z-\x,\, \w-\x\rangle
		=\langle \z-\x,\, \w-\z\rangle+\|\z-\x\|^2\le 0.
		\]
		Together with $\langle \z-\x,\, \w-\z\rangle\ge 0$, this yields $\|\z-\x\|^2\le 0$, hence $\z=\x\in X$.
		Therefore $\z\in X\cap Y$. The reverse inclusion is immediate.
	\end{proof}
	
	\begin{lemma}\label{lem:fix-mixture}
		Let $\calT_1,\calT_2:\calH\to\calH$ be nonexpansive and assume $\Fix(\calT_1)\cap\Fix(\calT_2)\neq\emptyset$.
		Then for any $\gamma\in(0,1)$,
		\[
		\Fix\bigl((1-\gamma)\calT_1+\gamma \calT_2\bigr)=\Fix(\calT_1)\cap\Fix(\calT_2).
		\]
	\end{lemma}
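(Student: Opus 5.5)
The inclusion $\Fix(\calT_1)\cap\Fix(\calT_2)\subseteq\Fix\bigl((1-\gamma)\calT_1+\gamma\calT_2\bigr)$ is immediate: if $\calT_1\z=\z=\calT_2\z$, then $(1-\gamma)\calT_1\z+\gamma\calT_2\z=\z$. The content of the lemma is the reverse inclusion. For that I will combine nonexpansiveness with the strict convexity of the Hilbert norm.

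Fix $\z$ with $\z=(1-\gamma)\calT_1\z+\gamma\calT_2\z$, and pick any $\w\in\Fix(\calT_1)\cap\Fix(\calT_2)$. Set $\bma:=\calT_1\z-\w$ and $\bmb:=\calT_2\z-\w$. Since $\w$ is fixed by both operators, nonexpansiveness gives
\[
\norm{\bma}=\norm{\calT_1\z-\calT_1\w}\le\norm{\z-\w},\qquad \norm{\bmb}\le\norm{\z-\w}.
\]
Moreover $\z-\w=(1-\gamma)\bma+\gamma\bmb$.

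The key step is the Hilbert-space identity for convex combinations:
\[
\norm{(1-\gamma)\bma+\gamma\bmb}^2=(1-\gamma)\norm{\bma}^2+\gamma\norm{\bmb}^2-\gamma(1-\gamma)\norm{\bma-\bmb}^2 .
\]
Applying it with $\z-\w$ on the left and bounding $\norm{\bma}^2$ and $\norm{\bmb}^2$ by $\norm{\z-\w}^2$, we get
\[
\norm{\z-\w}^2\le\norm{\z-\w}^2-\gamma(1-\gamma)\norm{\bma-\bmb}^2 .
\]
Because $\gamma\in(0,1)$, the factor $\gamma(1-\gamma)$ is strictly positive, so $\bma=\bmb$, that is, $\calT_1\z=\calT_2\z$. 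Substituting this back into the fixed-point equation gives $\z=\calT_1\z=\calT_2\z$, so $\z\in\Fix(\calT_1)\cap\Fix(\calT_2)$.

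The only delicate point is the role of the common fixed point $\w$. It is what allows nonexpansiveness to be turned into the norm bounds on $\bma$ and $\bmb$, and without the nonemptiness assumption the conclusion can fail. The strict inequality $0<\gamma<1$ is also essential, since it is what forces $\bma-\bmb=0$. Beyond these two points I expect no obstacle; the argument is a direct check.
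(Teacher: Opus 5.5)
Your proof is correct and follows the same route as the paper's. Both arguments take a common fixed point and squeeze $\|\z-\w\|^2$ between itself and $(1-\gamma)\|\calT_1\z-\w\|^2+\gamma\|\calT_2\z-\w\|^2$ to force $\calT_1\z=\calT_2\z$; your exact identity with the $-\gamma(1-\gamma)\|\bma-\bmb\|^2$ term just makes explicit the strict-convexity step that the paper leaves implicit in ``equality holds throughout.''
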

	
	\begin{proof}
		Let $\p\in\Fix(\calT_1)\cap\Fix(\calT_2)$ and set $\calT=(1-\gamma)\calT_1+\gamma \calT_2$.
		If $\z\in\Fix(\calT)$, then by convexity of $\|\cdot\|^2$ and nonexpansiveness,
		\[
		\|\z-\p\|^2=\|\calT\z-\p\|^2
		\le (1-\gamma)\|\calT_1\z-\p\|^2+\gamma\|\calT_2\z-\p\|^2
		\le \|\z-\p\|^2.
		\]
		Thus equality holds throughout, which forces $\calT_1\z=\calT_2\z$ and then $\z=\calT_1\z=\calT_2\z$.
		Hence $\z\in\Fix(\calT_1)\cap\Fix(\calT_2)$.
		The reverse inclusion is obvious. 
	\end{proof}
	
	\begin{proposition}\label{prop:fix-gen}
		Assume $X\cap Y\neq\emptyset$ and $\theta,\eta\in(0,1]$.
		Then:
		\begin{itemize}
			\item[(i)] $\Fix(\calB^\theta)=X\cap Y$;
			\item[(ii)] for any $\gamma\in(0,1),			\Fix(\calF_\gamma^{\theta,\eta})=X\cap Y.
$
		\end{itemize}
	\end{proposition}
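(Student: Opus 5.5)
The plan is to prove (i) directly from the firm nonexpansiveness of $\proj_Y$ and the nonexpansiveness of $\calR_X^\theta$, and then obtain (ii) by combining (i) with Lemma~\ref{lem:fix-mixture}.

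\emph{The inclusion $X\cap Y\subseteq\Fix(\calB^\theta)$.} Take $\w\in X\cap Y$. Since $\proj_X\w=\w$, we get $\calR_X^\theta\w=(1-\theta)\w+\theta(2\w-\w)=\w$. Then $\proj_Y\w=\w$ gives $\calB^\theta\w=\w$.

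\emph{The reverse inclusion.} Let $\z\in\Fix(\calB^\theta)$ and fix some $\w\in X\cap Y$. Set $\x=\proj_X\z$ and
\[
\uu=\calR_X^\theta\z=\z+2\theta(\x-\z),
\]
so that $\z=\proj_Y\uu$. Because $\proj_Y$ is firmly nonexpansive and $\proj_Y\w=\w$, the standard inequality gives
\[
\|\z-\w\|^2=\|\proj_Y\uu-\proj_Y\w\|^2\le\|\uu-\w\|^2-\|\uu-\proj_Y\uu\|^2=\|\uu-\w\|^2-\|\uu-\z\|^2 .
\]
Because $\calR_X^\theta$ is nonexpansive (Definition~\ref{def:relaxed-reflection}) and fixes $\w$, we also have $\|\uu-\w\|\le\|\z-\w\|$. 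Combining the two bounds yields $\|\uu-\z\|^2\le 0$, hence $\uu=\z$. This means $2\theta(\x-\z)=0$, and since $\theta>0$ we get $\z=\x\in X$. Finally $\z=\proj_Y\uu=\proj_Y\z\in Y$, so $\z\in X\cap Y$.

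This argument treats $\theta\in(0,1)$ and $\theta=1$ uniformly, so no averagedness constant of $\calR_X^\theta$ is needed. The only delicate point is the use of $\theta>0$: at $\theta=0$ we would have $\calB^\theta=\proj_Y$, whose fixed-point set is all of $Y$.

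\emph{Part (ii).} Apply Lemma~\ref{lem:fix-mixture} with $\calT_1=\calA^{\theta,\eta}$ and $\calT_2=\calB^\theta$. Both operators are nonexpansive:
\begin{itemize}
\item $\calB^\theta$ is a composition of the nonexpansive maps $\proj_Y$ and $\calR_X^\theta$;
\item $\calA^{\theta,\eta}$ is the average of $\Id$ and the nonexpansive composition $\calR_Y^\eta\calR_X^\theta$.
\end{itemize}
Every $\w\in X\cap Y$ is fixed by $\calR_X^\theta$ and $\calR_Y^\eta$, hence by $\calA^{\theta,\eta}$. Therefore $X\cap Y\subseteq\Fix(\calA^{\theta,\eta})$. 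Using (i), this gives
\[
\Fix(\calA^{\theta,\eta})\cap\Fix(\calB^\theta)=X\cap Y\neq\emptyset .
\]
Lemma~\ref{lem:fix-mixture} then gives, for $\gamma\in(0,1)$,
\[
\Fix(\calF_\gamma^{\theta,\eta})=\Fix(\calA^{\theta,\eta})\cap\Fix(\calB^\theta)=X\cap Y .
\]
We never need to identify $\Fix(\calA^{\theta,\eta})$ itself. It is generally larger than $X\cap Y$, as in Douglas--Rachford, and the intersection with $\Fix(\calB^\theta)$ cuts it down to exactly $X\cap Y$.
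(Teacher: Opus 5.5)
Your proof is correct. Part (ii) is the same as the paper's argument, but your reverse inclusion in (i) takes a different route.

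The paper uses the variational characterization $\z=\proj_Y(\uu)\iff\uu-\z\in N_Y(\z)$. From $\mathcal R_X^\theta\z-\z=2\theta(\x-\z)\in N_Y(\z)$ and the fact that $N_Y(\z)$ is a cone, it gets $\z=\proj_Y\proj_X\z$. It then appeals to Lemma~\ref{lem:fix-apy} for alternating projections.

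You instead use an energy (Fej\'er-type) estimate at a common fixed point $\w$. Firm nonexpansiveness of $\proj_Y$, combined with quasi-nonexpansiveness of $\mathcal R_X^\theta$, forces $\mathcal R_X^\theta\z=\z$ directly, which gives $\z\in X$ and then $\z\in Y$.

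What each approach buys:
\begin{itemize}
\item Your version is self-contained, since it bypasses Lemma~\ref{lem:fix-apy}. It is an instance of the general principle that composing a strongly quasi-nonexpansive map with a quasi-nonexpansive one sharing a fixed point yields exactly the common fixed points. It would therefore still work if $\proj_Y$ were replaced by any firmly nonexpansive operator with fixed-point set $Y$, where no cone structure is available.
\item The paper's cone argument never uses nonexpansiveness of $\mathcal R_X^\theta$. It shows $\Fix(\proj_Y\mathcal R_X^\theta)\subseteq\Fix(\proj_Y\proj_X)$ for every $\theta>0$ (reading $\mathcal R_X^\theta\z=\z+2\theta(\x-\z)$ literally). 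This includes over-relaxed values $\theta>1$, where your step $\|\uu-\w\|\le\|\z-\w\|$ fails. It also uses $X\cap Y\neq\emptyset$ only inside Lemma~\ref{lem:fix-apy}.
\end{itemize}

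One small correction to your closing aside, which your proof does not rely on: $\Fix(\calA^{\theta,\eta})$ can exceed $X\cap Y$ only in the DR case $\theta=\eta=1$. If $\min\{\theta,\eta\}<1$, one of the two relaxed reflections is averaged. The same energy argument you used in (i) then gives $\Fix(\mathcal R_Y^\eta\mathcal R_X^\theta)=X\cap Y$, hence $\Fix(\calA^{\theta,\eta})=X\cap Y$.
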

	
	\begin{proof}
		(i) If $\z\in X\cap Y$, then $\proj_X \z=\z$, so $\mathcal R_X^\theta \z=\z$, and hence $\calB^\theta \z=\proj_Y \z=\z$.
		
		Conversely, suppose $\z\in\Fix(\calB^\theta)$, i.e.\ $\z=\proj_Y(\mathcal R_X^\theta \z)$.
		Let $\x=\proj_X \z$.
		Using $\mathcal R_X^\theta \z=(1-\theta)\z+\theta(2\x-\z)=\z+2\theta(\x-\z)$ and the projector characterization
		\[
		\z=\proj_Y(\w)\quad\Longleftrightarrow\quad \w-\z\in N_Y(\z),
		\]
		we obtain $\mathcal R_X^\theta \z-\z=2\theta(\x-\z)\in N_Y(\z)$. Here $N_Y(\z):=\{\uu:\uu^\top(\y-\z)\leq0\,\,\text{for all }\y\in Y\}$ is the normal cone of $Y$ at $\z$.
        
		Since $N_Y(\z)$ is a cone and $\theta>0$, this implies $\x-\z\in N_Y(\z)$, i.e.\ $\z=\proj_Y \x=\proj_Y\proj_X \z$.
		By Lemma~\ref{lem:fix-apy}, we have that $\z\in X\cap Y$.
		
		For (ii), note that every $\z\in X\cap Y$ satisfies $\mathcal R_X^\theta \z=\z$ and $\mathcal R_Y^\eta \z=\z$,
		hence $\calA^{\theta,\eta}\z=\z$ and $\calB^\theta \z=\z$. Therefore,
		$\Fix(\calA^{\theta,\eta})\cap\Fix(\calB^\theta)$ is nonempty.
		Since $\calA^{\theta,\eta}$ and $\calB^\theta$ are nonexpansive, Lemma~\ref{lem:fix-mixture} yields
		\[
		\Fix(\calF_\gamma^{\theta,\eta})
		=\Fix\bigl((1-\gamma)\calA^{\theta,\eta}+\gamma \calB^\theta\bigr)
		=\Fix(\calA^{\theta,\eta})\cap\Fix(\calB^\theta).
		\]
		Using $\Fix(\calB^\theta)=X\cap Y$ gives $\Fix(\calF_\gamma^{\theta,\eta})=X\cap Y$. 
	\end{proof}
	
	\begin{remark}[On the role of $\gamma$]\label{rem:gamma-role}
		The case $\gamma=0$ corresponds to the pure DR-type branch $\calA^{\theta,\eta}$.
		As in the classical DR method, $\Fix(\calA^{\theta,\eta})$ may contain points outside $X\cap Y$,
		although the associated shadow sequences can still converge to a feasible point.
		In this work we aim to obtain a fixed-point limit lying directly in $X\cap Y$.
		For this reason we focus on $\gamma>0$, for which Proposition~\ref{prop:fix-gen} guarantees
		$\Fix(\calF_\gamma^{\theta,\eta})=X\cap Y$.
	\end{remark}
	
	\begin{theorem}\label{th:avg-and-conv-gcarp}
		Let $X,Y\subset\calH$ be nonempty closed convex sets and let $\theta,\eta\in(0,1]$.
		For $\gamma\in[0,1)$ define $\calF_\gamma^{\theta,\eta},\calF_{\mu,\gamma}^{\theta,\eta},\calA^{\theta,\eta},\calB^\theta$ as in \eqref{eq:fcarp} and \eqref{eq:G-def}.
		\begin{enumerate}
			\item[(i)] \textbf{(Nonexpansiveness and averagedness).}
			The relaxed reflections $\mathcal R_X^\theta$ and $\mathcal R_Y^\eta$ are nonexpansive.
			Consequently, $\calA^{\theta,\eta}$ is firmly nonexpansive ($\tfrac12$-averaged) and $\calB^\theta$ is nonexpansive. Moreover, $\calF_\gamma^{\theta,\eta}$ is $\tfrac{1+\gamma}{2}$-averaged, and for any $			\mu\in(0,\frac{2}{1+\gamma})$, the relaxed operator $\calF_{\mu,\gamma}^{\theta,\eta}$ is $\beta$-averaged with $			\beta=\frac{(1+\gamma)\mu}{2}\in(0,1)$.
			
			\item[(ii)] \textbf{(Global convergence).}
			Assume $X\cap Y\neq\emptyset$ and let $\gamma\in(0,1)$ and
			$\mu\in\bigl(0,\frac{2}{1+\gamma}\bigr)$.
			Then the sequence $\{\z^k\}$ generated by Algorithm~\ref{alg:gcarp}, i.e., $\z^{k+1}=\calF_{\mu,\gamma}^{\theta,\eta}(\z^k)$, converges weakly to some $\z^\star\in X\cap Y$: $\z^k \rightharpoonup \z^\star\in X\cap Y$.
			If, in addition, $\calH$ is finite-dimensional, then $\z^k\to \z^\star$ and consequently
			$\x^k\to \z^\star$ and $\y^k\to \z^\star$.
		\end{enumerate}
	\end{theorem}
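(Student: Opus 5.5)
For part (i), I will begin from Definition~\ref{def:relaxed-reflection}. Since $\calR_X=2\proj_X-\Id$ is nonexpansive (firm nonexpansiveness of $\proj_X$), $\mathcal R_X^\theta$ is a convex combination of $\Id$ and $\calR_X$ and hence nonexpansive; the same holds for $\mathcal R_Y^\eta$. Then $\mathcal R_Y^\eta\mathcal R_X^\theta$ is nonexpansive as a composition, so $\calA^{\theta,\eta}=\tfrac12(\Id+\mathcal R_Y^\eta\mathcal R_X^\theta)$ is $\tfrac12$-averaged by definition. Also $\calB^\theta=\proj_Y\mathcal R_X^\theta$ is nonexpansive as a composition of nonexpansive maps.

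For the averagedness of $\calF_\gamma^{\theta,\eta}$, I will write $\calA^{\theta,\eta}=\tfrac12\Id+\tfrac12\calN$ with $\calN:=\mathcal R_Y^\eta\mathcal R_X^\theta$. This gives
\[
\calF_\gamma^{\theta,\eta}=\tfrac{1-\gamma}{2}\Id+\tfrac{1-\gamma}{2}\calN+\gamma\calB^\theta
=\Bigl(1-\tfrac{1+\gamma}{2}\Bigr)\Id+\tfrac{1+\gamma}{2}\calM,
\qquad
\calM:=\tfrac{1-\gamma}{1+\gamma}\calN+\tfrac{2\gamma}{1+\gamma}\calB^\theta .
\]
Here $\calM$ is a convex combination of nonexpansive maps and therefore nonexpansive, so $\calF_\gamma^{\theta,\eta}$ is $\tfrac{1+\gamma}{2}$-averaged. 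When $\gamma=0$, this is consistent with the firm nonexpansiveness of $\calA^{\theta,\eta}$. Substituting into the relaxation yields $\calF_{\mu,\gamma}^{\theta,\eta}=(1-\mu)\Id+\mu\calF_\gamma^{\theta,\eta}=(1-\beta)\Id+\beta\calM$ with $\beta=\mu(1+\gamma)/2$. The condition $\mu\in(0,2/(1+\gamma))$ is exactly $\beta\in(0,1)$.

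For part (ii), I will invoke the Krasnosel'ski\u{\i}--Mann theorem for averaged operators (e.g.\ \cite[Prop.~5.16]{bauschke2020correction}): if $\calT$ is $\beta$-averaged with $\beta\in(0,1)$ and $\Fix\calT\ne\emptyset$, then the iterates converge weakly to a point of $\Fix\calT$. Since $\Fix(\calF_{\mu,\gamma}^{\theta,\eta})=\Fix(\calF_\gamma^{\theta,\eta})$ for $\mu>0$, Proposition~\ref{prop:fix-gen}(ii) (valid because $\gamma\in(0,1)$) identifies this set with $X\cap Y$, which is nonempty by assumption. This gives $\z^k\rightharpoonup\z^\star\in X\cap Y$. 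If a self-contained argument is preferred instead of citing, I will use the standard chain: Fejér monotonicity $\|\z^{k+1}-\p\|^2\le\|\z^k-\p\|^2-\tfrac{1-\beta}{\beta}\|\z^{k+1}-\z^k\|^2$, then asymptotic regularity, then demiclosedness of $\Id-\calM$ at $0$ to place weak cluster points in $\Fix$, and finally Opial's lemma.

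In finite dimensions, weak and strong convergence coincide, so $\z^k\to\z^\star$. Continuity of $\proj_X$ then gives $\x^{k+1}=\proj_X\z^k\to\proj_X\z^\star=\z^\star$, since $\z^\star\in X$. Likewise $\mathcal R_X^\theta\z^k\to\z^\star$, hence $\y^{k+1}\to\proj_Y\z^\star=\z^\star$. The only real subtlety I expect is the bookkeeping in the averagedness decomposition. In particular, one cannot simply combine the averagedness constants $\tfrac12$ of $\calA$ and $1$ of $\calB$ without an explicit decomposition, which is why I isolate $\calM$ above.
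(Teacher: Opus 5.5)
Your proposal is correct and follows the same route as the paper. The ingredients match: nonexpansiveness of the relaxed reflections, firm nonexpansiveness of $\calA^{\theta,\eta}$ and nonexpansiveness of $\calB^\theta$, the averagedness constants $\tfrac{1+\gamma}{2}$ and $\beta=\tfrac{(1+\gamma)\mu}{2}$, then the Krasnosel'ski\u{\i}--Mann theorem with Proposition~\ref{prop:fix-gen}, and continuity of the projections in finite dimensions. The one difference is in part (i): the paper cites the convex-mixture rule of \cite{CombettesYamada15}, treating $\calB^\theta$ as ``$1$-averaged'', which lies outside the range $\alpha\in(0,1)$ of Definition~\ref{def:nonexpansive-operator}. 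You instead write out $\calF_\gamma^{\theta,\eta}=\bigl(1-\tfrac{1+\gamma}{2}\bigr)\Id+\tfrac{1+\gamma}{2}\calM$ explicitly, which is exactly the proof of that rule and makes the step fully rigorous.
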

	
	\begin{proof}
		(i) Since $\proj_X$ and $\proj_Y$ are firmly nonexpansive, the reflections
		$\mathcal R_X=2\proj_X-\Id$ and $\mathcal R_Y=2\proj_Y-\Id$ are nonexpansive.
		Because $\mathcal R_X^\theta=(1-\theta)\Id+\theta\mathcal R_X$ and
		$\mathcal R_Y^\eta=(1-\eta)\Id+\eta\mathcal R_Y$ are convex combinations of nonexpansive maps,
		they are nonexpansive as well. Hence $\mathcal R_Y^\eta\mathcal R_X^\theta$ is nonexpansive and
		\[
		\calA^{\theta,\eta}=\tfrac12(\Id+\mathcal R_Y^\eta\mathcal R_X^\theta)
		\]
		is firmly nonexpansive. Also $\proj_Y$ is nonexpansive, so $\calB^\theta=\proj_Y\mathcal R_X^\theta$
		is nonexpansive.
		
		Writing $\calA^{\theta,\eta}$ as $\tfrac12$-averaged and $\calB^\theta$ as $1$-averaged,
		the convex-mixture rule for averaged operators \cite[Proposition~2.2]{CombettesYamada15} yields that $\calF_\gamma^{\theta,\eta}$ is averaged with
		parameter
		\[
		(1-\gamma)\cdot\tfrac12+\gamma\cdot 1=\tfrac{1+\gamma}{2}.
		\]
		Finally, if $\calF_\gamma^{\theta,\eta}$ is $\alpha$-averaged with $\alpha=\tfrac{1+\gamma}{2}$, then
		$\calF_{\mu,\gamma}^{\theta,\eta}=(1-\mu)\Id+\mu \calF_\gamma^{\theta,\eta}$ is $(\mu\alpha)$-averaged, i.e.,
		$\beta=\mu\alpha=\tfrac{(1+\gamma)\mu}{2}\in(0,1)$ under $\mu\in(0,\frac{2}{1+\gamma})$.
		
		(ii) Assume $X\cap Y\neq\emptyset$. By Proposition~\ref{prop:fix-gen},
		\[
		\Fix\bigl(\calF_{\mu,\gamma}^{\theta,\eta}\bigr)=\Fix\bigl(\calF_{\gamma}^{\theta,\eta}\bigr)=X\cap Y.
		\]
		Since $\calF_{\mu,\gamma}^{\theta,\eta}$ is averaged by part (i), the Krasnosel'ski\u{\i}--Mann theorem for averaged
		operators (e.g., \cite[Theorem~5.14]{bauschke2020correction}) yields weak convergence
		$\z^k\rightharpoonup \z^\star\in X\cap Y$.
		
		If $\calH$ is finite-dimensional, weak convergence implies strong convergence.
		By continuity of $\proj_X$, $\proj_Y$, and since $\z^\star\in X\cap Y$ implies
		$\mathcal R_X^\theta(\z^\star)=\z^\star$, we have
		$\x^{k+1}=\proj_X(\z^k)\to \z^\star$ and
		$\y^{k+1}=\proj_Y(\mathcal R_X^\theta \z^k)\to \z^\star$. 
	\end{proof}
	
	\begin{remark}[Asymptotic regularity and residuals]\label{rem:asymp-reg}
		Assume $X\cap Y\neq\emptyset$ and set $\calT:=\calF_{\mu,\gamma}^{\theta,\eta}$.
		Since $\calT$ is averaged, it is strongly quasi-nonexpansive; in particular, for every
		$\z\in\Fix(\calT)=X\cap Y$ one has the descent inequality
		\[
		\|\calT \x-\z\|^2 \le \|\x-\z\|^2-\frac{1-\beta}{\beta}\,\|(\Id-\calT)\x\|^2,
		\]
		see, e.g., \cite[Proposition~4.25]{bauschke2020correction}, where $\beta\in(0,1)$ denotes the averagedness constant.

		Applying this with $\x=\z^k$ yields that $\{\z^k\}$ is Fej\'er monotone with respect to $X\cap Y$ and that
		\[
		\sum_{k=0}^{\infty}\|\z^{k+1}-\z^k\|^2=\sum_{k=0}^{\infty}\|(\Id-\calT)\z^k\|^2<\infty,
		\]
		hence $\|\z^{k+1}-\z^k\|\to0$.
		
		Moreover, since $\calT$ is the relaxed map $\calT=(1-\mu)\Id+\mu \calF_\gamma^{\theta,\eta}$ with $\mu>0$, then
		\[
		\|\calF_\gamma^{\theta,\eta}(\z^k)-\z^k\|
		=\frac{1}{\mu}\,\|\calT\z^k-\z^k\|
		=\frac{1}{\mu}\,\|\z^{k+1}-\z^k\|\ \longrightarrow\ 0.
		\]
		These properties justify stopping criteria based on fixed-point residuals such as
		$\|\z^{k+1}-\z^k\|$ or equivalently $\|\calF_\gamma^{\theta,\eta}(\z^k)-\z^k\|$.
	\end{remark}

\subsection{Optimal linear convergence rate of \gcarp}\label{sec:gcarp-opt}
	
	In this subsection, we derive explicit spectral formulas and optimal-rate consequences for \gcarp
	(Algorithm~\ref{alg:gcarp}) in the benchmark setting where $X$ and $Y$ are linear subspaces of $\mathbb{R}^n$.
	
	Let $X,Y\subset\mathbb{R}^n$ be subspaces with $\dim(X)=p\le q=\dim(Y)$.
	Denote by $\phi_1,\ldots,\phi_p\in[0,\pi/2]$, the principal angles between $X$ and $Y$, and set
	\[
	c_i:=\cos\phi_i,\qquad s_i:=\sin\phi_i,\qquad t_i:=s_i^2\in[0,1]\qquad (i=1,\ldots,p).
	\]
	Let $s:=\dim(X\cap Y)$, so that $\phi_1=\cdots=\phi_s=0$ and $\phi_{s+1}=\phi_F>0$ is the Friedrichs angle.
	
	Consequently, it suffices to analyze $p+q<n$.
	
	By \cite{bauschke2016optimal}, there exists an orthogonal matrix $\D\in\mathbb{R}^{n\times n}$ such that 
	\begin{equation}\label{eq:PX-PY-principal-gcarp2}
		\bmP_X
		= \D
		\begin{bmatrix}
			\bmI_p & \bm0 & \bm0 & \bm0\\
			\bm0 & \bm0_p & \bm0 & \bm0\\
			\bm0 & \bm0 & \bm0_{q-p} & \bm0\\
			\bm0 & \bm0 & \bm0 & \bm0_{n-p-q}
		\end{bmatrix}
		\D^\top,\quad
		\bmP_Y
		= \D
		\begin{bmatrix}
			\C^2 & \C\bmS & \bm0 & \bm0\\
			\C\bmS & \bmS^2 & \bm0 & \bm0\\
			\bm0 & \bm0 & \bmI_{q-p} & \bm0\\
			\bm0 & \bm0 & \bm0 & \bm0_{n-p-q}
		\end{bmatrix}
		\D^\top,
	\end{equation}
	where $\C=\mathrm{diag}(c_i)_{i=1}^p$,  $\bmS=\mathrm{diag}(s_i)_{i=1}^p$, and $\bmP_X,\bmP_Y$ are the matrix representations of the projection operators $\proj_X,\proj_Y$ in the orthogonal basis induced by $\bmD$.
	
	In the following analysis, we will use the abbreviations
	\[
	a:=1-2\theta\in[-1,1),\qquad
	\kappa:=\gamma+(1-\gamma)\eta=\eta+\gamma(1-\eta)\in[\eta,1).
	\]
	
	\begin{theorem}[Spectral decomposition and linear rate of \gcarp]\label{th:FGCARPgammamu-rewrite}
		Let $X,Y\subset\mathbb{R}^n$ be subspaces with $p=\dim(X)\le q=\dim(Y)$ and $p+q< n$.
		Fix $\gamma\in[0,1)$, $\theta,\eta\in(0,1]$, and $\mu\in\bigl(0,\frac{2}{1+\gamma}\bigr)$.
		Consider the fixed-point iteration
		\[
		\z^{k+1}=\mathcal{F}_{\mu,\gamma}^{\theta,\eta}\z^k.
		\]
		
		\smallskip
		\noindent\textbf{(i) Convergence.}
		$\mathcal{F}_{\gamma}^{\theta,\eta}$ is $\frac{1+\gamma}{2}$-averaged and
		$\mathcal{F}_{\mu,\gamma}^{\theta,\eta}$ is $\alpha$-averaged with
		\[
		\alpha=\frac{(1+\gamma)\mu}{2}\in(0,1).
		\]
		Hence, for any $\z^0\in\mathbb{R}^n$, the sequence $\{\z^k\}$ converges to a point in
		$\Fix(\mathcal{F}_{\mu,\gamma}^{\theta,\eta})$.
		
		\smallskip
		\noindent\textbf{(ii) Block diagonalization and explicit eigenvalues.}
		In the principal-angle coordinates \eqref{eq:PX-PY-principal-gcarp2},
		\[
		\mathcal{F}_{\mu,\gamma}^{\theta,\eta}
		=
		\D\;\mathrm{blkdiag}\bigl(\bmM_1,\ldots,\bmM_p,\;\lambda_{q-p}\bmI_{q-p},\;\lambda_{n-p-q}\bmI_{n-p-q}\bigr)\;\D^\top,
		\]
		where for each $i=1,\ldots,p$,
		\begin{equation}\label{eq:Mi-gcarp2}
			\bmM_i
			=
			\begin{bmatrix}
				1-\mu\kappa t_i & \mu a\kappa c_i s_i\\[1mm]
				\mu \kappa c_i s_i & 1-\mu\bigl((1+\gamma)\theta+a\kappa c_i^2\bigr)
			\end{bmatrix},
		\end{equation}
		and the scalar blocks are
		\begin{equation}\label{eq:scalar-gcarp2}
			\lambda_{q-p}=1-\mu(1+\gamma)\theta,\qquad
			\lambda_{n-p-q}=(1-\mu)+\mu(1-\gamma)\bigl(1-\theta-\eta+2\theta\eta\bigr).
		\end{equation}
		The two eigenvalues of $\bmM_i$ are
		\begin{equation}\label{eq:eigs-Mi-gcarp2}
			\lambda_{i,\pm}
			=
			1-\frac{\mu}{2}\Bigl((1+\gamma)\theta+\kappa\bigl(1-2\theta c_i^2\bigr)\Bigr)
			\pm
			\frac{\mu}{2}\sqrt{\Delta_i},
		\end{equation}
		with discriminant
		\begin{equation}\label{eq:Delta-i-vertex}
			\Delta_i
			=
			\Bigl((1+\gamma)\theta+a\kappa-2(1-\theta)\kappa t_i\Bigr)^2
			+4a\kappa^2 t_i(1-t_i)
			=
			4\kappa^2\theta^2(t_i-\tau)^2+\Delta_{\min},
		\end{equation}
		where
		\begin{equation}\label{eq:tau-deltamin}
			\tau:=\frac{(1+\gamma)(1-\theta)-a\kappa}{2\kappa\theta},
			\qquad
			\Delta_{\min}=(1-\gamma^2)(2\theta-1)(1-2\eta).
		\end{equation}
		
		\smallskip
		\noindent\textbf{(iii) Linear rate and optimal rate in the sense of $\xi(\cdot)$.}
		Let $\A:=\mathcal{F}_{\mu,\gamma}^{\theta,\eta}$ and let $\A^\infty:=\lim_{k\to\infty}\A^k$.
		Then $\A^\infty$ exists and the convergence is linear.
		Moreover, the subdominant-eigenvalue modulus (as defined in Section~\ref{sec:maths_bg}) satisfies
		\[
		\xi(\A)=
		\max\Bigl\{
		\max_{1\le i\le p}\max\{|\lambda_{i,+}|,|\lambda_{i,-}|\},\;
		|\lambda_{q-p}|,\;
		|\lambda_{n-p-q}|
		\Bigr\}
		\in[0,1).
		\]
		Consequently, $\A$ is linearly convergent with any rate $r\in(\xi(\A),1)$.
		If, in addition, all eigenvalues of modulus $\xi(\A)$ are semisimple, then $\xi(\A)$ is the optimal linear rate in the sense of Theorem~2.12--2.15 of \cite{bauschke2016optimal} recalled in Section~\ref{sec:maths_bg}.
		
		\smallskip
		\noindent\textbf{(iv) Specialization to $X\cap Y$ for $\gamma\in(0,1)$.}
		If $\gamma\in(0,1)$, then $\Fix(\mathcal{F}_{\mu,\gamma}^{\theta,\eta})=\Fix(\mathcal{F}_{\gamma}^{\theta,\eta})=X\cap Y$
		\[
		\A^\infty=\bmP_{X\cap Y},
		\,\text{hence}\,\,
		\z^k \to \bmP_{X\cap Y}\z^0 \ \text{linearly.}
		\]
        Here $\bmP_{X\cap Y}$ is the matrix representation of the projection $\proj_{X\cap Y}$.
	
	\end{theorem}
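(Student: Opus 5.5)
Part (i) follows directly from Theorem~\ref{th:avg-and-conv-gcarp}(i). An averaged linear operator on $\mathbb{R}^n$ has convergent powers, since its iterates converge by the Krasnosel'ski\u{\i}--Mann theorem. Hence $\A^\infty$ exists. So (i) needs no new work, and the real content is the explicit computation in (ii).

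For (ii), I work in the basis given by $\D$ from \eqref{eq:PX-PY-principal-gcarp2}. After a permutation, $\bmP_X$ and $\bmP_Y$ are block diagonal. There are $p$ two-dimensional blocks. On each, $\bmP_X=\mathrm{diag}(1,0)$ and $\bmP_Y=\begin{bmatrix}c_i^2&c_is_i\\ c_is_i&s_i^2\end{bmatrix}$. The remaining blocks are an $\bmI_{q-p}$ block, where $\bmP_X=0$ and $\bmP_Y=\bmI$, and an $n-p-q$ block, where both projections vanish.

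Since every operator in \eqref{eq:fcarp}--\eqref{eq:G-def} is a polynomial in $\bmP_X,\bmP_Y$, each block can be computed separately.

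\emph{Scalar blocks.} On the $(q-p)$ block, $\mathcal R_X^\theta=1-2\theta$, $\mathcal R_Y^\eta=1$ and $\calB^\theta=1-2\theta$. This gives $\calF_\gamma=(1-\gamma)(1-\theta)+\gamma(1-2\theta)=1-(1+\gamma)\theta$, hence $\lambda_{q-p}$. On the last block, $\mathcal R_X^\theta=1-2\theta$, $\mathcal R_Y^\eta=1-2\eta$ and $\calB^\theta=0$. This gives $\calA=\tfrac12(1+(1-2\theta)(1-2\eta))=1-\theta-\eta+2\theta\eta$, which yields \eqref{eq:scalar-gcarp2}.

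\emph{$2\times2$ blocks.} Here $\mathcal R_X^\theta=\mathrm{diag}(1,a)$ and $\mathcal R_Y^\eta=\bmI-2\eta(\bmI-\bmP_Y)$. I expand $(1-\gamma)\tfrac12(\bmI+\mathcal R_Y^\eta\mathcal R_X^\theta)+\gamma\bmP_Y\mathcal R_X^\theta$ and collect terms. The combination $\kappa=\gamma+(1-\gamma)\eta$ appears naturally, because both branches contain $\bmP_Y\mathcal R_X^\theta$ with total weight $(1-\gamma)\eta+\gamma$. This should produce \eqref{eq:Mi-gcarp2}.

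The eigenvalues then come from the trace and determinant. The trace is $2-\mu\bigl((1+\gamma)\theta+\kappa(t_i+ac_i^2)\bigr)$, and $t_i+ac_i^2=1-2\theta c_i^2$, which gives the centre term in \eqref{eq:eigs-Mi-gcarp2}. For the discriminant I compute $(M_{11}-M_{22})^2+4M_{12}M_{21}$ and divide by $\mu^2$. This gives the first form of $\Delta_i$. Next I expand it as a quadratic in $t_i$ with leading coefficient $4\kappa^2(1-\theta)^2-4a\kappa^2=4\kappa^2\theta^2$, since $(1-\theta)^2-(1-2\theta)=\theta^2$. Completing the square gives the vertex $\tau$ and the minimum value $\Delta_{\min}$.

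Simplifying $\Delta_{\min}$ to $(1-\gamma^2)(2\theta-1)(1-2\eta)$ is the step I expect to be the main algebraic obstacle. I would carry it out by substituting $\kappa-\gamma=(1-\gamma)\eta$ and checking the identity as a polynomial identity, for instance by verifying it at $\theta=\tfrac12$ and $\theta=1$ and comparing coefficients.

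For (iii), $\A=\D\,\mathrm{blkdiag}(\cdots)\D^\top$ with $\D$ orthogonal, so $\sigma(\A)$ is the union of the block spectra. By (i) the powers $\A^k$ converge, and the Fact gives $|\lambda|<1$ for every eigenvalue $\lambda\neq1$. Therefore $\xi(\A)$ equals the stated maximum, with any eigenvalues equal to $1$ excluded from it, as in the definition of $\xi$. The rate claims then follow from the recalled Theorem~2.12/2.15 of \cite{bauschke2016optimal}.

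For (iv), Proposition~\ref{prop:fix-gen} gives $\Fix(\A)=X\cap Y$. Because $\A$ is averaged, $\A^\infty$ is a projection onto $\Fix(\A)$. To see that it is the orthogonal projection $\bmP_{X\cap Y}$, I observe that $\A$ is the identity on $X\cap Y$ and, being built from $\bmP_X,\bmP_Y$, leaves $(X\cap Y)^\perp$ invariant. Its restriction there has no fixed points, so the restricted powers tend to $0$. Hence $\A^\infty=\bmP_{X\cap Y}$, and linear convergence $\z^k\to\bmP_{X\cap Y}\z^0$ follows from (iii).
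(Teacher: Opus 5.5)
Your proposal is correct and follows the paper's proof essentially step for step: you block-diagonalize in the principal-angle basis, compute the $2\times2$ and scalar blocks directly, read off the eigenvalues from trace and discriminant, and complete the square in $t_i$; you then invoke the recalled Fact and Theorems~2.12/2.15 together with Proposition~\ref{prop:fix-gen}, and your invariant-subspace argument on $(X\cap Y)^\perp$ in (iv) is only a cosmetic variant of the paper's blockwise identification of the $1$-eigenspace. The $\Delta_{\min}$ step you flagged is a one-liner: set $B:=(1+\gamma)\theta+a\kappa$ and $D:=(1+\gamma)(1-\theta)-a\kappa$; then the vertex value is $B^2-D^2=(B+D)(B-D)$, where $B+D=1+\gamma$ and $B-D=(2\theta-1)(1+\gamma-2\kappa)=(2\theta-1)(1-\gamma)(1-2\eta)$.
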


	\begin{proof}
		(i) The conclusion follows from Theorem \ref{th:avg-and-conv-gcarp}.
	
		(ii)
		Insert the matrix representation \eqref{eq:PX-PY-principal-gcarp2} into
		$\mathcal R_X^\theta=(1-2\theta)\Id+2\theta \proj_X$ and $\mathcal R_Y^\eta=(1-2\eta)\Id+2\eta \proj_Y$,
		then expand \eqref{eq:G-def} in the orthogonal basis induced by $\D$.
		On each principal-angle plane, the action is a $2\times2$ linear map, yielding \eqref{eq:Mi-gcarp2}.
		On the residual subspaces of dimensions $(q-p)$ and $(n-p-q)$, the action is scalar, giving \eqref{eq:scalar-gcarp2}.
		This proves (ii).
		
		(iii)
		For $\bmM_i=\begin{bmatrix}\alpha & \beta\\ \delta & \varepsilon\end{bmatrix}$,
		$\lambda_{i,\pm}=\frac{\alpha+\varepsilon}{2}\pm\frac12\sqrt{(\alpha-\varepsilon)^2+4\beta\delta}$.
		Substituting \eqref{eq:Mi-gcarp2} yields \eqref{eq:eigs-Mi-gcarp2} with
		\[
		\Delta_i=(\alpha-\varepsilon)^2+4\beta\delta
		=
		\Bigl((1+\gamma)\theta+a\kappa-2(1-\theta)\kappa t_i\Bigr)^2
		+4a\kappa^2 t_i(1-t_i).
		\]
		Completing the square gives the vertex form \eqref{eq:Delta-i-vertex} and \eqref{eq:tau-deltamin}.
		
		Since $\A=\mathcal{F}_{\mu,\gamma}^{\theta,\eta}$ is a linear averaged operator on $\mathbb{R}^n$, the limit $\A^\infty=\lim_{k\to\infty}\A^k$ exists
		and the convergence rate is linear, as established in Fact~2.4 and Theorems~2.12--2.15 of \cite{bauschke2016optimal}. By the block decomposition in (ii), the set of eigenvalues of $\A$ is the union of
		$\{\lambda_{i,\pm}\}_{i=1}^p$ and the scalar eigenvalues \eqref{eq:scalar-gcarp2}.
		The quantity $\xi(\A)$ is therefore given by the stated maximum, excluding the eigenvalue $1$ coming from the intersection component,
		and the linear rate statement in (iii) follows from Theorem~2.12--2.15 of \cite{bauschke2016optimal}.
		
		(iv)
		For $\gamma\in(0,1)$, Proposition~\ref{prop:fix-gen} gives $\Fix(\A)=X\cap Y$.
		In the principal-angle basis, the only eigenvalue equal to $1$ occurs precisely on the intersection directions corresponding to $\phi_i=0$, while all other blocks have spectral radius strictly smaller than 1 because $\alpha\in(0,1)$.
		Hence $\A^k$ converges to the orthogonal projector onto the $1$-eigenspace, which is exactly $X\cap Y$,
		so $\A^\infty=\bmP_{X\cap Y}$ and (iv) follows. 
	\end{proof}
	
	\begin{remark}
		The case $p+q> n$ can be reduced to this setting by a standard lifting \cite{pong2016eigenvalue}.
		Indeed, choose an integer $r\ge 1$ such that $n':=n+r>p+q$, and define
		\[
		X':=X\times\{0_r\}\subset\mathbb{R}^{n'},\qquad
		Y':=Y\times\{0_r\}\subset\mathbb{R}^{n'}.
		\]
		Then, $\dim(X')=p$, $\dim(Y')=q$, and $p+q<n'$. Moreover, the principal angles between $X'$ and $Y'$
		coincide with those between $X$ and $Y$, and the lifted projectors satisfy
		\[
		\bmP_{X'}=
		\begin{bmatrix}
			\bmP_X & \bm0\\
			\bm0 & \bm0_r
		\end{bmatrix},
		\qquad
		\bmP_{Y'}=
		\begin{bmatrix}
			\bmP_Y & \bm0\\
			\bm0 & \bm0_r
		\end{bmatrix}.
		\]
		Hence, any operator constructed from $\bmP_X,\bmP_Y$ and their reflections/relaxations inherit the same block-diagonal embedding,
		so the principal-angle-dependent blocks, and therefore the linear rate are unchanged by the lifting.
	\end{remark}

    The spectral characterization in Theorem~\ref{th:FGCARPgammamu-rewrite} shows that, for subspace feasibility,
	the linear convergence factor of \gcarp is determined by the spectral radii of the $2\times2$ principal-angle
	blocks. In practice, however, directly minimizing the worst-case spectral radius over all principal angles
	$\{t_i=\sin^2(\phi_i)\}$ is cumbersome, since it leads to a nontrivial minimax problem in $\gamma$.
	
	In the symmetric setting $\mu=1$, $p=q$, and $p+q=n$, the iteration has only these $2\times2$ blocks and the
	principal angles fully govern the dynamics. Moreover, when $\theta,\eta\in[1/2,1]$, the worst-case block rate
	as a function of $t$ exhibits a decrease--then--increase pattern, implying that the maximum over
	$\{t_i\}$ is attained at the two extreme angles, i.e., the Friedrichs angle and the largest principal angle.
	This observation allows us to reduce the tuning of $\gamma$ to comparing a small set of explicit candidates
	obtained from a critical-damping condition $\Delta(t)=0$. The next theorem formalizes this endpoint-worst
	property and provides an explicit minimax recipe for selecting $\gamma$.
	
	\begin{theorem}\label{th:gcarp-minimax-gamma}
		Let $X,Y\subset\mathbb{R}^n$ be two subspaces with $p=\dim(X),q=\dim(Y)$, $p=q$ and $p+q=n$.
		Assume $\mu=1$ and $\theta,\eta\in[1/2,1]$.
		Let $\phi_1,\ldots,\phi_p\in(0,\pi/2]$ be the nonzero principal angles between $X$ and $Y$, and set
		$t_i:=\sin^2(\phi_i)\in(0,1]$.
		Denote by $\phi_F$ the Friedrichs angle and by $\phi_p$ the largest principal angle, and write
		$t_F:=\sin^2(\phi_F)$ and $t_p:=\sin^2(\phi_p)$.
		
		For each $t\in(0,1)$ and $\gamma\in[0,1)$, let $\bmM(t,\gamma)$ be the $2\times 2$ principal-angle block of $\mathcal{F}_{\mu,\gamma}^{\theta,\eta}$ obtained from \eqref{eq:Mi-gcarp2} with $\mu=1$,
		and let $\rho(\bmM(t,\gamma))$ denote its spectral radius. Define
		\begin{equation}\label{eq:wpsi2-minimax}
        \begin{aligned}
        \psi(t)&:=t+\sqrt{2\theta-1}\sqrt{t(1-t)},\\
        w_\pm(t)&:=(2\theta-1)+2(1-\theta)t\pm 2\sqrt{2\theta-1}\sqrt{t(1-t)}.
        \end{aligned}
        \end{equation}
		Moreover, set
		\[
		\kappa(\gamma):=\eta+(1-\eta)\gamma,\qquad a:=1-2\theta\le 0.
		\]
		
		\smallskip
		\noindent\textbf{(i) Endpoint-worst property.}
		For every fixed $\gamma\in[0,1)$,
		\begin{equation}\label{eq:endpoint-worst2-minimax}
			\max_{1\le i\le p}\rho\bigl(\bmM(t_i,\gamma)\bigr)
			=\max\bigl\{\rho(\bmM(t_F,\gamma)),\ \rho(\bmM(t_p,\gamma))\bigr\}.
		\end{equation}
		
		\smallskip
		\noindent\textbf{(ii) Critical damping candidates.}
		Let $\Delta(t,\gamma)$ be the discriminant of the characteristic polynomial of $\bmM(t,\gamma)$, i.e., the quantity in \eqref{eq:Delta-i-vertex} with $t_i=t$.
		If one enforces the critical damping condition $\Delta(t,\gamma)=0$ for a given $t\in(0,1)$,
		then necessarily
		\begin{equation}\label{eq:gamma-star2-minimax}
			\gamma=\gamma_\pm(t):=\frac{\eta\,w_\pm(t)-\theta}{\theta-(1-\eta)\,w_\pm(t)},
			\qquad\text{whenever }\gamma_\pm(t)\in[0,1).
		\end{equation}
		At such a parameter, the two eigenvalues of $\bmM(t,\gamma)$ coincide and satisfy
		\begin{equation}\label{eq:rho-star2-minimax}
        \begin{aligned}
            \lambda^\star(t)&=1-\kappa^\star(t)\,\psi(t),\\
            \kappa^\star(t)&:=\kappa(\gamma_\pm(t))=\eta+(1-\eta)\gamma_\pm(t)=\frac{(1+\gamma_\pm(t))\theta}{w_\pm(t)}.
        \end{aligned}
		\end{equation}
		
		\smallskip
		\noindent\textbf{(iii) Minimax recipe.}
		Consider the minimax problem
		\[
		\min_{\gamma\in[0,1)}\ \max_{1\le i\le p}\rho\bigl(\bmM(t_i,\gamma)\bigr).
		\]
		By \eqref{eq:endpoint-worst2-minimax}, it reduces to minimizing
		$\max\{\rho(\bmM(t_F,\gamma)),\rho(\bmM(t_p,\gamma))\}$.
		A practical minimax choice can therefore be obtained by evaluating
		the endpoint critical-damping candidates $\gamma_\pm(t_F)$ and $\gamma_\pm(t_p)$, whenever they lie in $[0,1)$, and selecting the one that gives the smallest value of
		$\max\{\rho(\bmM(t_F,\gamma)),\rho(\bmM(t_p,\gamma))\}$.
		
		\smallskip
		\noindent\textbf{(iv) Reduction to \carp.}
		When $\theta=\eta=1$, \eqref{eq:gamma-star2-minimax}--\eqref{eq:rho-star2-minimax}
		reduce to the \carp formulas
		$\gamma^\star(t)=2\sqrt{t(1-t)}=2cs$ and $\rho^\star(t)=|c^2-cs|$ in \cite{shen2025composed}.
	\end{theorem}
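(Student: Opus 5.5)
Everything starts from the explicit $2\times2$ block $\bmM(t,\gamma)$ of \eqref{eq:Mi-gcarp2} with $\mu=1$, together with its eigenvalue formula \eqref{eq:eigs-Mi-gcarp2} and discriminant \eqref{eq:Delta-i-vertex}. With $\mu=1$ we have $\operatorname{tr}\bmM=2-(1+\gamma)\theta-\kappa(1-2\theta c^2)$. Using $c^2=1-t$, this becomes
\[
\operatorname{tr}\bmM = 2-(1+\gamma)\theta-\kappa a-2\theta\kappa t,
\]
which is affine in $t$. A direct computation also gives $\det\bmM$ as an affine function of $t$.

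Part (ii) should be proved first, since part (i) relies on it. Setting $\Delta(t,\gamma)=0$ in the unexpanded form of \eqref{eq:Delta-i-vertex}, with $a\le0$, means
\[
\bigl((1+\gamma)\theta+a\kappa-2(1-\theta)\kappa t\bigr)^2=4|a|\kappa^2t(1-t).
\]
Taking square roots gives
\[
(1+\gamma)\theta=\kappa\bigl(-a+2(1-\theta)t\pm2\sqrt{-a}\sqrt{t(1-t)}\bigr)=\kappa\,w_\pm(t),
\]
because $-a=2\theta-1$. This is exactly $\kappa^\star=(1+\gamma)\theta/w_\pm$. Substituting $\kappa=\eta+(1-\eta)\gamma$ makes the relation linear in $\gamma$, and solving it yields \eqref{eq:gamma-star2-minimax}. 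At such a $\gamma$ the double eigenvalue is $\tfrac12\operatorname{tr}\bmM$. Replacing $(1+\gamma)\theta$ by $\kappa w_\pm$ and simplifying, we should obtain $1-\kappa(t\pm\sqrt{2\theta-1}\sqrt{t(1-t)})$, and this has to be checked against $\psi$ for the appropriate sign branch. Part (iv) is then pure substitution: $\theta=\eta=1$ gives $\kappa=1$ and $w_\pm=1\pm2\sqrt{t(1-t)}$, so $\gamma_\pm=w_\pm-1=\pm2cs$ and $\lambda^\star=1-t-cs=c^2-cs$.

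For the endpoint-worst property (i), fix $\gamma$ and study $r(t):=\rho(\bmM(t,\gamma))$ on $(0,1]$. I would show that $r$ is quasiconvex in $t$, meaning it has no interior strict local maximum; the maximum over the finite set $\{t_i\}\subset[t_F,t_p]$ is then attained at $t_F$ or $t_p$. Since $\Delta$ is a convex quadratic in $t$ with vertex form $4\kappa^2\theta^2(t-\tau)^2+\Delta_{\min}$, the set $\{\Delta<0\}$ is an interval, possibly empty; note $\Delta_{\min}\le0$ when $\theta,\eta\ge1/2$. On that interval the eigenvalues are complex and $r(t)^2=\det\bmM(t)$, which is affine in $t$ and hence monotone. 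Outside it, $r(t)=\max|\tfrac12\operatorname{tr}\bmM\pm\tfrac12\sqrt{\Delta}|$. Here $\tfrac12\operatorname{tr}\bmM$ is affine and $\sqrt{\Delta}$ is convex in $t$, because it is the Euclidean norm of an affine vector-valued function when $\Delta_{\min}\ge0$, or a hyperbola branch otherwise. Consequently $|\tfrac12\operatorname{tr}\bmM|+\tfrac12\sqrt\Delta$ is convex on each real-eigenvalue piece. Gluing the pieces works because $r$ is continuous and the pieces meet at the points where $\Delta=0$. On each side of the complex interval $r$ is convex, and on the interval itself $r$ is monotone. This excludes an interior maximum, provided the monotone middle piece does not create a peak at a junction.

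The main obstacle is this junction argument. I would handle it by checking one-sided derivatives. As $t$ approaches a root of $\Delta$ from the real side, the term $\sqrt\Delta$ has infinite slope, which pushes $r$ up moving away from the interval. That is a ``valley'' shape, consistent with the decrease-then-increase pattern claimed before the theorem. The hypothesis $\theta,\eta\in[1/2,1]$ (so $a\le0$ and $\kappa\ge1/2$) should guarantee that $\tfrac12\operatorname{tr}\bmM\ge0$, so that no sign flip of the trace creates an extra kink. Part (iii) follows immediately from (i) and (ii): by (i) the objective depends only on the two endpoint blocks, so one restricts to the finitely many critical-damping candidates from (ii) and compares them.
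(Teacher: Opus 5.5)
\textbf{There is a genuine gap in your argument for (i).} Parts (ii)--(iv) of your plan are sound; the failure is at the convexity step of (i). Under $\theta,\eta\in[1/2,1]$ one has $\Delta_{\min}\le 0$, with strict inequality whenever $\theta,\eta>1/2$, which is the generic case. There $\sqrt{\Delta}=\sqrt{4\kappa^2\theta^2(t-\tau)^2+\Delta_{\min}}$ is a branch of a hyperbola opening along the $t$-axis, and
\[
\frac{d^2}{dt^2}\sqrt{\Delta}=\frac{4\kappa^2\theta^2\,\Delta_{\min}}{\Delta^{3/2}}<0 .
\]
So each real-eigenvalue piece of $\sqrt{\Delta}$ is concave, not convex. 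Your norm-of-an-affine-map argument only covers $\Delta_{\min}\ge 0$, which here means only $\theta=1/2$ or $\eta=1/2$.

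Consequently $|\tfrac12\operatorname{tr}\bmM|+\tfrac12\sqrt{\Delta}$ is concave on any subinterval where the trace keeps its sign. Concavity plus an infinite slope at a root $t_-\le t_+$ of $\Delta$ does not rule out an interior maximum on $[t_+,1]$ or $[0,t_-]$.

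Your auxiliary claim $\tfrac12\operatorname{tr}\bmM\ge 0$ is also false. For $\theta=\eta=1$ the block is $\begin{bmatrix}c^2&-cs\\ cs&c^2-\gamma\end{bmatrix}$, whose trace $2(1-t)-\gamma$ is negative for $t>1-\gamma/2$. For instance, with $\gamma=0.6$, $\rho$ takes the values $0.2,\ 0.456,\ 0.6$ at $t=0.9,\ 0.95,\ 1$: increasing, but strictly concave, with negative trace throughout.

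\textbf{The fix is slope dominance, not convexity.} From $\Delta_{\min}\le 0$ you get $\sqrt{\Delta}\le 2\kappa\theta|t-\tau|$, hence on the real pieces
\[
\Bigl|\frac{d}{dt}\tfrac12\sqrt{\Delta}\Bigr|=\frac{2\kappa^2\theta^2|t-\tau|}{\sqrt{\Delta}}\ \ge\ \kappa\theta=\Bigl|\frac{d}{dt}\tfrac12\operatorname{tr}\bmM\Bigr| .
\]
Write $\rho=\max\{\lambda_+,-\lambda_-\}$; this holds for real $\lambda_-\le\lambda_+$ regardless of the sign of the trace. The inequality then shows that both $\lambda_+$ and $-\lambda_-$ are nonincreasing for $t<\tau$ and nondecreasing for $t>\tau$.

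You also need the direction on the complex interval, not just monotonicity, to exclude a peak at $t_+$. Indeed $\det\bmM=(1-\gamma)(1-\theta-\eta+2\theta\eta-\theta\kappa t)$ is strictly decreasing there. Together these give $\rho$ nonincreasing on $[0,t_+]$ and nondecreasing on $[t_+,1]$ (read $t_+=\tau$ when $\Delta_{\min}=0$), which yields (i).

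For comparison, the paper argues along these same monotone eigenvalue branches, but it also omits the dominance inequality. It takes $\rho=\lambda_+$ on $[0,t_-]$ without justification. It also infers that $\max\{\lambda_+,-\lambda_-\}$ increases on $[t_+,1]$ from the behaviour of $-\lambda_-$ alone.

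\textbf{On (ii), your caution is warranted.} Substituting $(1+\gamma)\theta=\kappa w_\pm$ gives
\[
\lambda^\star=1-\kappa^\star\bigl(t\pm\sqrt{2\theta-1}\sqrt{t(1-t)}\bigr),
\]
so the $\psi$-formula holds only on the $\gamma_+$ branch. The case $\gamma_-(t)\in[0,1)$ does occur: for example, $\theta=3/4$, $\eta=1$, $t=0.99$ gives $\gamma_-\approx 0.14$.
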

	
	\begin{proof}
		(i)	Fix $\gamma\in[0,1)$ and a principal-angle parameter $t\in(0,1)$.
		Write $c:=\sqrt{1-t}$ and $s:=\sqrt{t}$ so that $cs=\sqrt{t(1-t)}$.
		Let $\lambda_\pm(t,\gamma)$ be the eigenvalues of $M(t,\gamma)$ and let $\Delta(t,\gamma)$
		be their discriminant.	
	
		If $\Delta(t,\gamma)<0$, then $\lambda_-(t,\gamma)=\overline{\lambda_+(t,\gamma)}$ and hence
		\begin{equation}\label{eq:rho2-det-minimax}
			\rho(\bmM(t,\gamma))^2=|\lambda_+(t,\gamma)|^2=\det \bmM(t,\gamma).
		\end{equation}
		Using the explicit block \eqref{eq:Mi-gcarp2} with $\mu=1$ and the abbreviations
		$\kappa=\kappa(\gamma)$ and $a=1-2\theta$, a direct simplification yields
		\begin{equation}\label{eq:det-affine-minimax}
			\det \bmM(t,\gamma)=(1-\gamma)\bigl(b-\theta\kappa\,t\bigr),
			\qquad
			b:=1-\theta-\eta+2\theta\eta.
		\end{equation}
		Since $\theta>0$ and $\kappa>0$, the map $t\mapsto\det M(t,\gamma)$ is affine and strictly decreasing on $(0,1)$.
		Therefore, in the complex-eigenvalue regime, $t\mapsto \rho(\bmM(t,\gamma))$ is strictly decreasing.
		
		If $\Delta(t,\gamma)\ge 0$, then $\lambda_\pm(t,\gamma)\in\mathbb{R}$ and
		\[
		\rho(\bmM(t,\gamma))=\max\{|\lambda_+(t,\gamma)|,|\lambda_-(t,\gamma)|\}.
		\]
		
		We now show that, on the boundary intervals where $\Delta(t,\gamma)\ge 0$ holds,
		the extremal eigenvalue branches are monotone.
	
		Under $\theta,\eta\in[1/2,1]$, we have $a=1-2\theta\le 0$ and
		$\Delta_{\min}=(1-\gamma^2)(2\theta-1)(1-2\eta)\le 0$ in \eqref{eq:Delta-i-vertex}.
		Hence, the vertex form \eqref{eq:Delta-i-vertex} implies that $\Delta(t,\gamma)\ge 0$
		holds only near the endpoints $t=0$ and $t=1$, unless $\Delta_{\min}=0$, in which case $\Delta\equiv 0$.
		More precisely, when $\Delta_{\min}<0$, there exists $t_- \le t_+$ such that
		\[
		\Delta(t,\gamma)\ge 0\quad\Longleftrightarrow\quad t\in[0,t_-]\cup[t_+,1].
		\]
		On $[0,t_-]$, the square-root term $\sqrt{\Delta(t,\gamma)}$ decreases as $t$ increases, since $\Delta$ is a convex parabola with vertex at $\tau$, while the trace term
		$\tr \bmM(t,\gamma)$ is affine and strictly decreasing in $t$ due to the term $-\kappa\theta t$.
		Consequently, the upper eigenvalue branch $\lambda_+(t,\gamma)
		=\frac12\tr \bmM(t,\gamma)+\frac12\sqrt{\Delta(t,\gamma)}$ is strictly decreasing on $[0,t_-]$,
		and hence $\rho(M(t,\gamma))=\lambda_+(t,\gamma)$ decreases there.
		
		On $[t_+,1]$, the square-root term $\sqrt{\Delta(t,\gamma)}$ increases in $t$ whereas
		$\tr \bmM(t,\gamma)$ keeps decreasing linearly. This implies that $\lambda_-(t,\gamma)
		=\frac12\tr \bmM(t,\gamma)-\frac12\sqrt{\Delta(t,\gamma)}$ decreases with $t$ and becomes negative
		for $t$ sufficiently close to $1$; therefore $-\lambda_-(t,\gamma)$ increases on $[t_+,1]$.
		As a result, $\rho(\bmM(t,\gamma))=\max\{\lambda_+(t,\gamma),-\lambda_-(t,\gamma)\}$ is increasing on $[t_+,1]$.
		
		Combining this with Step~1, i.e., strict decrease in the complex regime, we conclude that for every fixed $\gamma$,
		the function $t\mapsto \rho(\bmM(t,\gamma))$ decreases for small $t$ and increases for large $t$.
		Hence, on any interval $[t_F,t_p]$ containing all $\{t_i\}$, the maximum is attained at an endpoint,
		which proves \eqref{eq:endpoint-worst2-minimax}.
		
		(ii) Fix $t\in(0,1)$ and enforce $\Delta(t,\gamma)=0$.
		Using the explicit discriminant in \eqref{eq:Delta-i-vertex} with $t_i=t$ and $\mu=1$,
		this condition is equivalent to
		\[
		\Bigl((1+\gamma)\theta+a\kappa-2(1-\theta)\kappa t\Bigr)^2
		=
		4(2\theta-1)\kappa^2 t(1-t).
		\]
		Taking square roots and using $-a=2\theta-1$ yields
		\[
		\frac{(1+\gamma)\theta}{\kappa}
		=
		(2\theta-1)+2(1-\theta)t\pm 2\sqrt{2\theta-1}\sqrt{t(1-t)}=w_\pm(t).
		\]
		Recalling $\kappa=\eta+(1-\eta)\gamma$, rearranging gives \eqref{eq:gamma-star2-minimax}.
		
		At $\Delta(t,\gamma)=0$, the eigenvalues coalesce to $\lambda^\star(t)=\frac12\tr \bmM(t,\gamma)$.
		Substituting $\frac{(1+\gamma)\theta}{\kappa}=w_\pm(t)$ into $\tr \bmM(t,\gamma)$
		yields \eqref{eq:rho-star2-minimax}, where $\psi(t)$ is defined in \eqref{eq:wpsi2-minimax}.
		
		(iii) By \eqref{eq:endpoint-worst2-minimax}, the worst-case rate over $\{t_i\}$ equals the endpoint maximum
		$$\max\{\rho(\bmM(t_F,\gamma)),\rho(\bmM(t_p,\gamma))\}.$$ Therefore, the minimax problem reduces to
		choosing $\gamma$ to balance or reduce these two endpoint rates.
		The critical-damping values $\gamma_\pm(t_F)$ and $\gamma_\pm(t_p)$ provide explicit endpoint candidates;
		comparing their endpoint maxima yields the stated recipe.
		
		(iv) If $\theta=\eta=1$, then $\sqrt{2\theta-1}=1$ and $w_\pm(t)=1\pm 2\sqrt{t(1-t)}$.
		Since $\eta=1$ implies $\kappa\equiv 1$, \eqref{eq:gamma-star2-minimax} gives
		$\gamma^\star(t)=2\sqrt{t(1-t)}$, and \eqref{eq:rho-star2-minimax} gives
		$\lambda^\star(t)=1-(t+\sqrt{t(1-t)})=c^2-cs$, hence $\rho^\star(t)=|c^2-cs|$. 
	\end{proof}

    \subsection{A fully non-stationary scheme (ns-\gcarp\ with varying $\gamma_k,\theta_k,\eta_k$)}
	\label{subsec:nsfull}
	
	The stationary \gcarp iteration $\z^{k+1}=\mathcal{F}_{\mu,\gamma}^{\theta,\eta}(\z^k)$ already enjoys global convergence
	for closed convex feasibility problems, and in the subspace case we can further characterize its linear rate via
	principal angles.
	In practice, however, the best-performing parameters may depend on the local geometry and can vary along the iterates.
	This motivates a fully non-stationary variant, Alogrithm \ref{alg:nsfull}, in the spirit of non-stationary averaged-operator schemes; see, e.g., \cite{liang2016convergence}.
	\begin{algorithm}[t]
\caption{Fully non-stationary generalized scheme (ns-\gcarp)}
\label{alg:nsfull}
\normalsize
\textbf{Initial:} $k=0$, $\z^0\in\calH$; choose
$\gamma_k\in[\gamma_{\min},\gamma_{\max}]\subset(0,1)$, $\theta_k\in[\theta_{\min},\theta_{\max}]\subset(0,1]$,
$\eta_k\in[\eta_{\min},\eta_{\max}]\subset(0,1]$;
$\mu\in\bigl(0,\frac{2}{1+\gamma_{\max}}\bigr)$;
constants $c_\gamma,c_\theta,c_\eta,\delta>0$\;

\Repeat{convergence}{
    $\x^{k+1}=\proj_X(\z^k)$\;
    $\uu^{k+1}=\mathcal R_X^{\theta_k}(\z^k)=(1-\theta_k)\z^k+\theta_k(2\x^{k+1}-\z^k)$\;
    $\y^{k+1}=\proj_Y(\uu^{k+1})$\;
    $\vv^{k+1}=\mathcal R_Y^{\eta_k}(\uu^{k+1})=(1-\eta_k)\uu^{k+1}+\eta_k(2\y^{k+1}-\uu^{k+1})$\;
    $\z^{k+1}=(1-\mu)\z^k+\mu\Big((1-\gamma_k)\tfrac12(\z^k+\vv^{k+1})+\gamma_k \y^{k+1}\Big)$\;
    Update $(\gamma_{k+1},\theta_{k+1},\eta_{k+1})$ so that
    \[
	|\gamma_{k+1}-\gamma_k|\le \frac{c_\gamma}{(k+1)^{2+\delta}},\,\,
	|\theta_{k+1}-\theta_k|\le \frac{c_\theta}{(k+1)^{2+\delta}},\,\,
	|\eta_{k+1}-\eta_k|\le \frac{c_\eta}{(k+1)^{2+\delta}},
					\]
    and clip back to the above intervals if needed\;
}
\end{algorithm}
	
	\begin{remark}[How to enforce the increment conditions]\label{rmk:nsfull-updates}
		Algorithm~\ref{alg:nsfull} only requires that successive parameter changes are summably small; the specific update rule
		is otherwise flexible. A convenient way to guarantee the increment bounds is to proceed in two stages:
		first generate trial parameters $(\tilde\gamma_{k+1},\tilde\theta_{k+1},\tilde\eta_{k+1})$ by any practical
		heuristic, then apply a diminishing damping step and clip.
		
		For instance, let $\alpha_k:=\frac{1}{(k+1)^{2+\delta}}$ and set
		\begin{align*}
			\gamma_{k+1}
			&=\Pi_{[\gamma_{\min},\gamma_{\max}]}\big((1-\alpha_k)\gamma_k+\alpha_k\tilde\gamma_{k+1}\big),\\
			\theta_{k+1}
			&=\Pi_{[\theta_{\min},\theta_{\max}]}\big((1-\alpha_k)\theta_k+\alpha_k\tilde\theta_{k+1}\big),\\
			\eta_{k+1}
			&=\Pi_{[\eta_{\min},\eta_{\max}]}\big((1-\alpha_k)\eta_k+\alpha_k\tilde\eta_{k+1}\big),
		\end{align*}
		where $\Pi_{[a,b]}$ denotes the Euclidean projection onto the interval $[a,b]$.
		Then
		\begin{align*}
			|\gamma_{k+1}-\gamma_k|
			&\le \alpha_k(\gamma_{\max}-\gamma_{\min}),\\
			|\theta_{k+1}-\theta_k|
			&\le \alpha_k(\theta_{\max}-\theta_{\min}),\\
			|\eta_{k+1}-\eta_k|
			&\le \alpha_k(\eta_{\max}-\eta_{\min}),
		\end{align*}
		so the increment conditions of Algorithm~\ref{alg:nsfull} hold.
		
		As an example of a trial rule, one may use Armijo--Goldstein-like multiplicative updates driven by
		$\rho_k=\|\z^{k+1}-\z^k\|/\|\z^k-\z^{k-1}\|$:
		\begin{align*}
			\tilde\gamma_{k+1}&=
			\begin{cases}
				\gamma_k\cdot \varsigma_\gamma, & \rho_k<c_1,\\
				\min\{\gamma_k/\varsigma_\gamma,\gamma_{\max}\}, & \text{otherwise},
			\end{cases}\\
			\tilde\theta_{k+1}&=
			\begin{cases}
				\theta_k\cdot \varsigma_\theta, & \rho_k<c_1,\\
				\min\{\theta_k/\varsigma_\theta,\theta_{\max}\}, & \text{otherwise},
			\end{cases}\\
			\tilde\eta_{k+1}&=
			\begin{cases}
				\eta_k\cdot \varsigma_\eta, & \rho_k<c_1,\\
				\min\{\eta_k/\varsigma_\eta,\eta_{\max}\}, & \text{otherwise},
			\end{cases}
		\end{align*}
		with prescribed $\varsigma_\gamma,\varsigma_\theta,\varsigma_\eta\in(0,1)$ and threshold $c_1>0$.
		The damping step above then ensures the required summability regardless of how
		$(\tilde\gamma_{k+1},\tilde\theta_{k+1},\tilde\eta_{k+1})$ are produced.
		
		Deterministic schedules are also covered. For example, fix $(\bar\gamma,\bar\theta,\bar\eta)$ in the interior of the intervals and set
		\begin{align*}
			\gamma_k
			&=\Pi_{[\gamma_{\min},\gamma_{\max}]}\big(\bar\gamma+\frac{a_\gamma}{(k+1)^{\delta_\gamma}}\big),\\
			\theta_k
			&=\Pi_{[\theta_{\min},\theta_{\max}]}\big(\bar\theta+\frac{a_\theta}{(k+1)^{\delta_\theta}}\big),\\
			\eta_k
			&=\Pi_{[\eta_{\min},\eta_{\max}]}\big(\bar\eta+\frac{a_\eta}{(k+1)^{\delta_\eta}}\big),
			\qquad
			\delta_\gamma,\delta_\theta,\delta_\eta>1,
		\end{align*}
		which yields convergent parameter sequences with summable deviations.
	\end{remark}
	
	Define $\calA^{\theta,\eta}, 	\calB^\theta,\calF_\gamma^{\theta,\eta}, \mathcal{F}_{\mu,\gamma}^{\theta,\eta}$ as in \eqref{eq:fcarp} and \eqref{eq:G-def}, then the main part of Algorithm~\ref{alg:nsfull} can be written compactly as the non-stationary Krasnosel'ski\u{\i}--Mann iteration
	\begin{equation}\label{eq:fp-nsfull}
		\z^{k+1}=\mathcal{F}_{\mu,\gamma_k}^{\theta_k,\eta_k}(\z^k).
	\end{equation}
	
	\begin{lemma}[Uniform averagedness and common fixed points]\label{lem:avg-fix-limit}
		Let $X,Y\subset\calH$ be nonempty closed convex with $X\cap Y\neq\emptyset$.
		Assume $\gamma_k\in(0,1)$, $\theta_k,\eta_k\in(0,1]$, and $\mu\in\bigl(0,\frac{2}{1+\gamma_k}\bigr)$.
		Then, $\mathcal{F}_{\mu,\gamma_k}^{\theta_k,\eta_k}$ is averaged, and
		\[
		\Fix\bigl(\mathcal{F}_{\mu,\gamma_k}^{\theta_k,\eta_k}\bigr)=X\cap Y.
		\]
		Moreover, if $\gamma_k\in[\gamma_{\min},\gamma_{\max}]\subset(0,1)$ and $\mu\in\bigl(0,\frac{2}{1+\gamma_{\max}}\bigr)$,
		then each $\mathcal{F}_{\mu,\gamma_k}^{\theta_k,\eta_k}$ is $\alpha_k$-averaged with
		$\alpha_k\le \frac{(1+\gamma_{\max})\mu}{2}<1$, i.e., the averagedness constant can be chosen uniformly over the parameter intervals.
	\end{lemma}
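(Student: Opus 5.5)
The plan is to obtain the lemma by applying the stationary results of this section to each fixed parameter triple $(\gamma_k,\theta_k,\eta_k)$, and then to track how the averagedness constant depends on the parameters.

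\emph{Averagedness and fixed points.} Fix $k$. Since $\gamma_k\in(0,1)\subset[0,1)$, $\theta_k,\eta_k\in(0,1]$ and $\mu\in\bigl(0,\frac{2}{1+\gamma_k}\bigr)$, Theorem~\ref{th:avg-and-conv-gcarp}(i) applies verbatim to the triple $(\gamma_k,\theta_k,\eta_k)$. The argument is short:
\begin{itemize}
\item $\calA^{\theta_k,\eta_k}$ is firmly nonexpansive and $\calB^{\theta_k}$ is nonexpansive.
\item The convex-mixture rule then shows that $\calF_{\gamma_k}^{\theta_k,\eta_k}$ is $\frac{1+\gamma_k}{2}$-averaged.
\item Consequently the relaxation $\mathcal{F}_{\mu,\gamma_k}^{\theta_k,\eta_k}$ is $\alpha_k$-averaged with $\alpha_k:=\frac{(1+\gamma_k)\mu}{2}\in(0,1)$.
\end{itemize}
For the fixed-point set, I observe that $\mathcal{F}_{\mu,\gamma_k}^{\theta_k,\eta_k}\z=\z$ holds iff $\mu(\calF_{\gamma_k}^{\theta_k,\eta_k}\z-\z)=0$. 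Since $\mu>0$, this gives $\Fix(\mathcal{F}_{\mu,\gamma_k}^{\theta_k,\eta_k})=\Fix(\calF_{\gamma_k}^{\theta_k,\eta_k})$. Proposition~\ref{prop:fix-gen}(ii), which needs $\gamma_k\in(0,1)$ and $X\cap Y\neq\emptyset$, then identifies this set with $X\cap Y$.

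\emph{Uniform bound.} The map $\gamma\mapsto\frac{(1+\gamma)\mu}{2}$ is increasing. Hence $\gamma_k\le\gamma_{\max}$ gives $\alpha_k\le\frac{(1+\gamma_{\max})\mu}{2}$, and $\mu<\frac{2}{1+\gamma_{\max}}$ makes this bound strictly less than $1$. The hypothesis $\mu<\frac{2}{1+\gamma_{\max}}$ also implies $\mu<\frac{2}{1+\gamma_k}$ for every $k$, so the first part applies at each step.

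I also want to record that an $\alpha$-averaged operator is $\alpha'$-averaged for any $\alpha'\in[\alpha,1)$, since $(1-\alpha')\Id+\alpha'\bigl(\tfrac{\alpha}{\alpha'}\calN+(1-\tfrac{\alpha}{\alpha'})\Id\bigr)=(1-\alpha)\Id+\alpha\calN$ and the inner map is nonexpansive. This lets a single common constant $\bar\alpha:=\frac{(1+\gamma_{\max})\mu}{2}$ be used for every $k$.

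\emph{Expected obstacle.} Essentially nothing here is hard. The only point needing care is that the fixed-point identification uses $\gamma_k>0$ strictly, which is guaranteed because $\gamma_{\min}>0$, together with the $\mu$-scaling argument above. The constants $\theta_k,\eta_k$ play no role in the averagedness bound.
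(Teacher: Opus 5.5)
Your proposal is correct and follows the paper's route. The paper simply notes that the lemma is a direct specialization of Proposition~\ref{prop:fix-gen} and Theorem~\ref{th:avg-and-conv-gcarp}(i), applied to each fixed triple $(\gamma_k,\theta_k,\eta_k)$. Your explicit remarks fill in details the paper leaves implicit: that $\Fix(\mathcal{F}_{\mu,\gamma_k}^{\theta_k,\eta_k})=\Fix(\calF_{\gamma_k}^{\theta_k,\eta_k})$ because $\mu>0$, and that an $\alpha$-averaged map is $\alpha'$-averaged for every $\alpha'\in[\alpha,1)$.
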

	
	\begin{proof}
		This is a direct specialization of Proposition~\ref{prop:fix-gen} and Item (i) of Theorem~\ref{th:avg-and-conv-gcarp}
		, whose proofs do not rely on stationarity of the iterates. 
	\end{proof}
	
	Following \cite{liang2016convergence}, we view the simultaneous non-stationarity in $(\gamma_k,\theta_k,\eta_k)$
	as a summable perturbation of a limiting averaged map. Under the increment conditions in Algorithm~\ref{alg:nsfull}, each of $\{\gamma_k\},\{\theta_k\},\{\eta_k\}$ is Cauchy and hence convergent.
	Let
	\[
	\bar\gamma:=\lim_{k\to\infty}\gamma_k,\qquad
	\bar\theta:=\lim_{k\to\infty}\theta_k,\qquad
	\bar\eta:=\lim_{k\to\infty}\eta_k.
	\]
	Then \eqref{eq:fp-nsfull} admits the perturbation decomposition
	\begin{equation}\label{eq:fp-nsfull-perturb}
		\z^{k+1}
		=\mathcal{F}_{\mu,\bar\gamma}^{\bar\theta,\bar\eta}(\z^k)
		+\underbrace{\Big(\mathcal{F}_{\mu,\gamma_k}^{\theta_k,\eta_k}-\mathcal{F}_{\mu,\bar\gamma}^{\bar\theta,\bar\eta}\Big)(\z^k)}_{=:~\ee^k}.
	\end{equation}

	The next lemma quantifies Lipschitz dependence on $(\theta,\eta)$ and will be used to bound $\|\ee^k\|$.
	
	\begin{lemma}[Parameter perturbation bounds]\label{lem:perturb-bounds}
		Fix $\z^\dagger\in X\cap Y$. For any $\z\in\calH$ and any $\theta,\theta'\in(0,1]$, $\eta,\eta'\in(0,1]$, we have
		\begin{align*}
			\|\mathcal R_X^\theta \z-\mathcal R_X^{\theta'}\z\|
			&\le 4|\theta-\theta'|\,\|\z-\z^\dagger\|,\\
			\|\calB^\theta \z-\calB^{\theta'}\z\|
			&\le 4|\theta-\theta'|\,\|\z-\z^\dagger\|,\\
			\|\calA^{\theta,\eta}\z-\calA^{\theta',\eta'}\z\|
			&\le 4\big(|\theta-\theta'|+|\eta-\eta'|\big)\,\|\z-\z^\dagger\|.
		\end{align*}
	\end{lemma}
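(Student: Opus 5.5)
The plan is to reduce all three bounds to one elementary estimate on $\mathcal R_X^\theta$ and then propagate it through the compositions using nonexpansiveness and the fact that $\z^\dagger$ is a common fixed point. Since $\mathcal R_X^\theta=\Id+2\theta(\proj_X-\Id)$, we have
\[
\mathcal R_X^\theta\z-\mathcal R_X^{\theta'}\z=2(\theta-\theta')(\proj_X\z-\z).
\]
Because $\z^\dagger\in X$, we get $\proj_X\z-\z=(\proj_X\z-\proj_X\z^\dagger)-(\z-\z^\dagger)$. Nonexpansiveness of $\proj_X$ then gives $\|\proj_X\z-\z\|\le 2\|\z-\z^\dagger\|$. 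In fact firm nonexpansiveness even gives $\|\proj_X\z-\z\|\le\|\z-\z^\dagger\|$, but the cruder bound suffices. This yields the first inequality, $\|\mathcal R_X^\theta\z-\mathcal R_X^{\theta'}\z\|\le 4|\theta-\theta'|\|\z-\z^\dagger\|$. The same argument with $Y$ in place of $X$ gives $\|\mathcal R_Y^\eta\w-\mathcal R_Y^{\eta'}\w\|\le 4|\eta-\eta'|\|\w-\z^\dagger\|$ for every $\w$, using $\z^\dagger\in Y$.

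For $\calB^\theta$, write $\calB^\theta\z-\calB^{\theta'}\z=\proj_Y(\mathcal R_X^\theta\z)-\proj_Y(\mathcal R_X^{\theta'}\z)$. Nonexpansiveness of $\proj_Y$ reduces this directly to the first bound, giving the second inequality with the same constant.

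For $\calA^{\theta,\eta}$, the factor $\tfrac12$ gives $\calA^{\theta,\eta}\z-\calA^{\theta',\eta'}\z=\tfrac12\bigl(\mathcal R_Y^\eta\mathcal R_X^\theta\z-\mathcal R_Y^{\eta'}\mathcal R_X^{\theta'}\z\bigr)$. I will insert the intermediate term $\mathcal R_Y^{\eta'}\mathcal R_X^\theta\z$ and split into two differences.
\begin{itemize}
\item The first difference is $\|\mathcal R_Y^\eta\uu-\mathcal R_Y^{\eta'}\uu\|$ with $\uu=\mathcal R_X^\theta\z$. The $Y$-version of the first bound gives at most $4|\eta-\eta'|\|\uu-\z^\dagger\|$. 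Since $\mathcal R_X^\theta$ is nonexpansive and fixes $\z^\dagger$, we have $\|\uu-\z^\dagger\|\le\|\z-\z^\dagger\|$.
\item The second difference is $\|\mathcal R_Y^{\eta'}\mathcal R_X^\theta\z-\mathcal R_Y^{\eta'}\mathcal R_X^{\theta'}\z\|$. Nonexpansiveness of $\mathcal R_Y^{\eta'}$ (Theorem~\ref{th:avg-and-conv-gcarp}(i)) bounds it by $4|\theta-\theta'|\|\z-\z^\dagger\|$.
\end{itemize}
Summing and multiplying by $\tfrac12$ gives the bound $2(|\theta-\theta'|+|\eta-\eta'|)\|\z-\z^\dagger\|$, which is even better than the stated constant $4$.

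There is no real obstacle; the only point needing care is the middle step. There the perturbation of $\mathcal R_Y^\eta$ is evaluated at $\uu=\mathcal R_X^\theta\z$ rather than at $\z$, so one must use that $\z^\dagger$ is a fixed point of $\mathcal R_X^\theta$ to return to $\|\z-\z^\dagger\|$. This is exactly where the hypothesis $\z^\dagger\in X\cap Y$, rather than just $\z^\dagger\in X$ or $\z^\dagger\in Y$, is used.
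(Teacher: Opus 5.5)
Your proof is correct and follows the paper's argument essentially step for step: the identity $\mathcal R_X^\theta\z-\mathcal R_X^{\theta'}\z=2(\theta-\theta')(\proj_X\z-\z)$ combined with $\|\proj_X\z-\z\|\le 2\|\z-\z^\dagger\|$, nonexpansiveness of $\proj_Y$ for $\calB^\theta$, and the same intermediate-term splitting for $\calA^{\theta,\eta}$. Two of your remarks improve on the paper. First, the factor $\tfrac12$ does give the constant $2$, so the stated $4$ is not tight. Second, writing the second difference as $\mathcal R_Y^{\eta'}\mathcal R_X^\theta\z-\mathcal R_Y^{\eta'}\mathcal R_X^{\theta'}\z$ is cleaner than the paper's expression, which formally applies the nonlinear map $\mathcal R_Y^{\eta'}$ to a difference of points.
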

	
	\begin{proof}
		Since $\z^\dagger\in X$, nonexpansiveness of $\proj_X$ yields
		\[
		\|\proj_X \z-\z\|
		\le \|\proj_X \z-\z^\dagger\|+\|\z-\z^\dagger\|
		\le 2\|\z-\z^\dagger\|.
		\]
		Moreover,
		\[
		\mathcal R_X^\theta \z-\mathcal R_X^{\theta'}\z
		=\bigl((1-\theta)-(1-\theta')\bigr)\z+\bigl(\theta-\theta'\bigr)(2\proj_X \z-\z)
		=2(\theta-\theta')(\proj_X \z-\z),
		\]
		hence $\|\mathcal R_X^\theta \z-\mathcal R_X^{\theta'}\z\|\le 4|\theta-\theta'|\,\|\z-\z^\dagger\|$.
        
		The bound for $\calB^\theta=\proj_Y\mathcal R_X^\theta$ follows from nonexpansiveness of $\proj_Y$.
		
		For $\calA^{\theta,\eta}=\tfrac12(\Id+\mathcal R_Y^\eta\mathcal R_X^\theta)$, write
		\[
		\mathcal R_Y^\eta\mathcal R_X^\theta \z-\mathcal R_Y^{\eta'}\mathcal R_X^{\theta'}\z
		=\big(\mathcal R_Y^\eta-\mathcal R_Y^{\eta'}\big)\mathcal R_X^\theta \z
		+\mathcal R_Y^{\eta'}\big(\mathcal R_X^\theta \z-\mathcal R_X^{\theta'}\z\big).
		\]
		Using nonexpansiveness of $\mathcal R_Y^{\eta'}$ and repeating the first part with $Y$ in place of $X$ since $\z^\dagger\in Y$, we obtain
		\[
		\big\|\big(\mathcal R_Y^\eta-\mathcal R_Y^{\eta'}\big)\w\big\|
		\le 4|\eta-\eta'|\,\|\w-\z^\dagger\|\qquad(\forall \w\in\calH),
		\]
		and since $\mathcal R_X^\theta$ is nonexpansive, $\|\mathcal R_X^\theta \z-\z^\dagger\|\le \|\z-\z^\dagger\|$.
		Combining these estimates and multiplying by $\tfrac12$ yields the desired bound for $\calA^{\theta,\eta}$. 
	\end{proof}
	
	Combining the perturbation decomposition \eqref{eq:fp-nsfull-perturb}, the summable error estimate implied by Lemma~\ref{lem:perturb-bounds}, and the standard Krasnosel'ski\u{\i}--Mann scheme with summable errors, we obtain the convergence of the fully non-stationary scheme.
	
	\begin{proposition}[Convergence of the fully non-stationary scheme]\label{prop:conv-nsfull}
		Assume $X\cap Y\neq\emptyset$ and $\calH=\bbR^n$.
		Let $\{\gamma_k\}\subset[\gamma_{\min},\gamma_{\max}]\subset(0,1)$,
		$\{\theta_k\}\subset[\theta_{\min},\theta_{\max}]\subset(0,1]$,
		$\{\eta_k\}\subset[\eta_{\min},\eta_{\max}]\subset(0,1]$,
		and $\mu\in\Bigl(0,\frac{2}{1+\gamma_{\max}}\Bigr)$.
		Assume further that there exist $(\bar\gamma,\bar\theta,\bar\eta)\in(0,1)\times(0,1]\times(0,1]$ such that
		\begin{equation}\label{eq:sum-params}
			\sum_{k=0}^\infty\Big(|\gamma_k-\bar\gamma|+|\theta_k-\bar\theta|+|\eta_k-\bar\eta|\Big)<\infty.
		\end{equation}
		Then Algorithm~\ref{alg:nsfull} generates a sequence $\{\z^k\}$ converging to a point $\z^\star\in X\cap Y$.
		Moreover, the shadow sequences $\x^{k+1}=\proj_X(\z^k)$ and $\y^{k+1}=\proj_Y(\mathcal R_X^{\theta_k}\z^k)$ satisfy
		\[
		(\z^k,\x^{k},\y^{k})\to (\z^\star,\z^\star,\z^\star).
		\]
	\end{proposition}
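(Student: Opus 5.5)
Write $\calT:=\mathcal{F}_{\mu,\bar\gamma}^{\bar\theta,\bar\eta}$ for the limiting operator and $\calT_k:=\mathcal{F}_{\mu,\gamma_k}^{\theta_k,\eta_k}$, so that \eqref{eq:fp-nsfull-perturb} reads $\z^{k+1}=\calT\z^k+\ee^k$ with $\ee^k=(\calT_k-\calT)\z^k$. The plan is to treat this as an inexact Krasnosel'ski\u{\i}--Mann iteration of $\calT$ with summable errors. Since $\bar\gamma\in[\gamma_{\min},\gamma_{\max}]\subset(0,1)$ and $\mu<2/(1+\gamma_{\max})$, Lemma~\ref{lem:avg-fix-limit} shows that $\calT$ is averaged with $\Fix(\calT)=X\cap Y$. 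The same holds for every $\calT_k$, with a uniform constant.

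\textbf{Step 1 (error bound).} Fix $\z^\dagger\in X\cap Y$. Expanding
\[
\calT_k-\calT=\mu\big[(1-\gamma_k)\calA^{\theta_k,\eta_k}+\gamma_k\calB^{\theta_k}-(1-\bar\gamma)\calA^{\bar\theta,\bar\eta}-\bar\gamma\calB^{\bar\theta}\big],
\]
I split it into a $\gamma$-difference term $(\bar\gamma-\gamma_k)(\calA^{\bar\theta,\bar\eta}-\calB^{\bar\theta})\z$ and the $(\theta,\eta)$-difference terms. The $\gamma$-term is controlled because $\calA^{\bar\theta,\bar\eta}\z^\dagger=\calB^{\bar\theta}\z^\dagger=\z^\dagger$ and both maps are nonexpansive, so $\|(\calA^{\bar\theta,\bar\eta}-\calB^{\bar\theta})\z\|\le 2\|\z-\z^\dagger\|$. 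The other terms are handled by Lemma~\ref{lem:perturb-bounds}. Altogether
\[
\|\ee^k\|\le C\,\varepsilon_k\,\|\z^k-\z^\dagger\|,\qquad \varepsilon_k:=|\gamma_k-\bar\gamma|+|\theta_k-\bar\theta|+|\eta_k-\bar\eta|,
\]
with $C$ depending only on $\mu$, and $\sum_k\varepsilon_k<\infty$ by \eqref{eq:sum-params}.

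\textbf{Step 2 (boundedness).} This is the expected obstacle, because the error bound is relative to $\|\z^k-\z^\dagger\|$ rather than absolutely summable a priori. I would resolve it by using directly that $\calT_k$ is nonexpansive with $\z^\dagger\in\Fix(\calT_k)$, which gives $\|\z^{k+1}-\z^\dagger\|\le\|\z^k-\z^\dagger\|$. Thus the sequence is Fej\'er monotone with respect to $X\cap Y$ and bounded by $R:=\|\z^0-\z^\dagger\|$ without any perturbation argument. Step 1 then yields $\sum_k\|\ee^k\|\le CR\sum_k\varepsilon_k<\infty$.

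\textbf{Step 3 (convergence).} The inexact KM theorem for the averaged map $\calT$ with summable errors (\cite[Prop.~5.34]{bauschke2020correction}, or \cite{liang2016convergence}) gives $\z^k\to\z^\star\in\Fix(\calT)=X\cap Y$, convergence being strong since $\calH=\bbR^n$. Alternatively one can argue directly. Fej\'er monotonicity makes $\|\z^k-\p\|$ convergent for every $\p\in X\cap Y$. The averagedness inequality of Remark~\ref{rem:asymp-reg}, applied to $\calT_k$ with its uniform constant, gives $\|\z^k-\calT_k\z^k\|\to0$, and hence $\|\z^k-\calT\z^k\|\le\|\z^k-\calT_k\z^k\|+\|\ee^k\|\to0$. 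Any cluster point $\hat\z$ then satisfies $\hat\z=\calT\hat\z$ by continuity of $\calT$, so $\hat\z\in X\cap Y$. Since $\|\z^k-\hat\z\|$ converges and tends to $0$ along a subsequence, the whole sequence converges to $\hat\z$.

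\textbf{Step 4 (shadows).} For the shadow sequences, $\x^{k+1}=\proj_X\z^k\to\proj_X\z^\star=\z^\star$ by continuity of $\proj_X$. For $\y^{k+1}$, I estimate
\[
\|\y^{k+1}-\z^\star\|\le\|\mathcal R_X^{\theta_k}\z^k-\mathcal R_X^{\bar\theta}\z^k\|+\|\mathcal R_X^{\bar\theta}\z^k-\mathcal R_X^{\bar\theta}\z^\star\|,
\]
using $\proj_Y\z^\star=\z^\star$ and $\mathcal R_X^{\bar\theta}\z^\star=\z^\star$. The first term is at most $4\varepsilon_k R$ by Lemma~\ref{lem:perturb-bounds}, and the second is at most $\|\z^k-\z^\star\|$ by nonexpansiveness. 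Both tend to zero, so $\y^{k+1}\to\z^\star$.
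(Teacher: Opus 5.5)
Your proposal is correct and follows the paper's proof essentially step for step. It uses the same $\gamma$/$(\theta,\eta)$ splitting of $\ee^k$ bounded via Lemma~\ref{lem:perturb-bounds}, Fej\'er monotonicity from the common fixed-point set to make the relative error bound summable, the inexact Krasnosel'ski\u{\i}--Mann theorem, and continuity for the shadows. Your additions are small rigor improvements:
\begin{itemize}
\item you check $\bar\gamma\le\gamma_{\max}$, so that $\bar{\mathcal F}$ is averaged;
\item you bound $\calA^{\bar\theta,\bar\eta}-\calB^{\bar\theta}$ by the triangle inequality rather than as if it were linear;
\item you give an explicit estimate for $\y^{k+1}$.
\end{itemize}
Your alternative cluster-point argument also shows that only $\ee^k\to0$, not summability, is needed.
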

	
	\begin{proof}
		Fix any $\z^\dagger\in X\cap Y$ and define
		\[
		\mathcal F_k:=\mathcal{F}_{\mu,\gamma_k}^{\theta_k,\eta_k},
		\qquad
		\bar{\mathcal F}:=\mathcal{F}_{\mu,\bar\gamma}^{\bar\theta,\bar\eta}.
		\]
		
		By Lemma~\ref{lem:avg-fix-limit}, each $\mathcal F_k$ is averaged and hence nonexpansive, and
		$\Fix(\mathcal F_k)=X\cap Y$. In particular, $\mathcal F_k(\z^\dagger)=\z^\dagger$ for all $k$.
		Therefore,
		\[
		\|\z^{k+1}-\z^\dagger\|
		=\|\mathcal F_k(\z^k)-\mathcal F_k(\z^\dagger)\|
		\le \|\z^k-\z^\dagger\|
		\le \|\z^0-\z^\dagger\|,
		\]
		so $\{\z^k\}$ is bounded, and indeed Fej\'er monotone with respect to $X\cap Y$.
		
		From \eqref{eq:fp-nsfull-perturb} we have
		\[
		\z^{k+1}=\bar{\mathcal F}(\z^k)+\ee^k,
		\qquad
		\ee^k:=(\mathcal F_k-\bar{\mathcal F})(\z^k).
		\]
		Using $\mathcal{F}_{\mu,\gamma}^{\theta,\eta}=(1-\mu)\Id+\mu\big((1-\gamma)\calA^{\theta,\eta}+\gamma \calB^\theta\big)$,
		we expand
		\[
		\mathcal F_k-\bar{\mathcal F}
		=\mu\Big[(1-\gamma_k)\big(\calA^{\theta_k,\eta_k}-\calA^{\bar\theta,\bar\eta}\big)
		+\gamma_k\big(\calB^{\theta_k}-\calB^{\bar\theta}\big)
		+(\gamma_k-\bar\gamma)\big(\calB^{\bar\theta}-\calA^{\bar\theta,\bar\eta}\big)\Big].
		\]
		Since $\z^\dagger$ is a common fixed point of $\calA^{\bar\theta,\bar\eta}$ and $\calB^{\bar\theta}$ and both maps are
		nonexpansive, we obtain
		\[
		\|(\calB^{\bar\theta}-\calA^{\bar\theta,\bar\eta})(\z^k)\|
		=\|(\calB^{\bar\theta}-\calA^{\bar\theta,\bar\eta})(\z^k-\z^\dagger)\|
		\le 2\|\z^k-\z^\dagger\|.
		\]
		By Lemma~\ref{lem:perturb-bounds},
        \begin{align*}
            \|\calB^{\theta_k}\z^k-\calB^{\bar\theta}\z^k\|
		&\le 4|\theta_k-\bar\theta|\,\|\z^k-\z^\dagger\|,\\
		\|\calA^{\theta_k,\eta_k}\z^k-\calA^{\bar\theta,\bar\eta}\z^k\|
		&\le 4\big(|\theta_k-\bar\theta|+|\eta_k-\bar\eta|\big)\,\|\z^k-\z^\dagger\|.
        \end{align*}
		Combining these bounds and using $\gamma_k\in(0,1)$ gives
		\begin{align*}
		    \|\ee^k\|
		&\le
		C\,\mu\,\|\z^k-\z^\dagger\|\Big(|\gamma_k-\bar\gamma|+|\theta_k-\bar\theta|+|\eta_k-\bar\eta|\Big)\\
		&\le
		C\,\mu\,\|\z^0-\z^\dagger\|\Big(|\gamma_k-\bar\gamma|+|\theta_k-\bar\theta|+|\eta_k-\bar\eta|\Big),
		\end{align*}
		for an absolute constant $C>0$ (e.g., $C=8$).
		By \eqref{eq:sum-params}, $\sum_{k=0}^\infty \|\ee^k\|<\infty$.
	
		The iteration $\z^{k+1}=\bar{\mathcal F}(\z^k)+\ee^k$ is a Krasnosel'ski\u{\i}--Mann scheme for the averaged map
		$\bar{\mathcal F}$ with summable errors. Hence $\{\z^k\}$ is quasi-Fej\'er monotone with respect to
		$\Fix(\bar{\mathcal F})$ and converges in $\bbR^n$ to some $\z^\star\in \Fix(\bar{\mathcal F})$
		(see, e.g., \cite[Proposition~5.34]{bauschke2020correction}).
		By Lemma~\ref{lem:avg-fix-limit}, $\Fix(\bar{\mathcal F})=X\cap Y$, so $\z^\star\in X\cap Y$.

		Since $\proj_X$ is continuous, $\x^{k+1}=\proj_X(\z^k)\to \proj_X(\z^\star)=\z^\star$.
		Moreover, $\theta_k\to\bar\theta$ follows from \eqref{eq:sum-params}, and for any fixed $\z\in\bbR^n$,
		$\mathcal R_X^{\theta_k}\z\to \mathcal R_X^{\bar\theta}\z$.
		Thus,
		\[
		\y^{k+1}=\proj_Y(\mathcal R_X^{\theta_k}\z^k)\to \proj_Y(\mathcal R_X^{\bar\theta}\z^\star).
		\]
		Since $\z^\star\in X$, we have $\proj_X(\z^\star)=\z^\star$ and hence $\mathcal R_X^{\bar\theta}\z^\star=\z^\star$.
		Because $\z^\star\in Y$ as well, $\proj_Y(\z^\star)=\z^\star$, so the right-hand side equals $\z^\star$.
		Therefore, $\y^{k+1}\to \z^\star$.
	\end{proof}
	
	\begin{remark}[A convenient sufficient condition]\label{rmk:conv-nsfull-suff}
		A simple sufficient condition for \eqref{eq:sum-params} is the increment bound used in Algorithm~\ref{alg:nsfull}:
		if for some $\delta>0$,
		\[
		|\gamma_{k+1}-\gamma_k|\le \frac{c_\gamma}{(k+1)^{2+\delta}},\,\,
		|\theta_{k+1}-\theta_k|\le \frac{c_\theta}{(k+1)^{2+\delta}},\,\,
		|\eta_{k+1}-\eta_k|\le \frac{c_\eta}{(k+1)^{2+\delta}},
		\]
		then each parameter sequence is Cauchy and admits a limit $(\bar\gamma,\bar\theta,\bar\eta)$, and moreover
		$\sum_{k\ge0}|\gamma_k-\bar\gamma|<\infty$, $\sum_{k\ge0}|\theta_k-\bar\theta|<\infty$,
		$\sum_{k\ge0}|\eta_k-\bar\eta|<\infty$, implying \eqref{eq:sum-params}.
	\end{remark}

\section{Numerical experiments}\label{sec:numerics}
	
	We evaluate the proposed generalized composed alternating relaxed projection algorithm (\gcarp) and its non-stationary variant (ns-\gcarp) on several representative feasibility problems of the form
	$\find\,\x\in X\cap Y$ with nonempty intersection.
	The goal of this section is threefold:
	(i) to verify the theoretical rate characterization on subspace feasibility problems;
	(ii) to illustrate, both quantitatively and geometrically, how the additional relaxation parameters
	$(\theta,\eta)$ can further mitigate the spiraling behavior observed for Douglas--Rachford-type iterations;
	and (iii) to assess the practical benefit and robustness of full non-stationarity in challenging
	nonlinear feasibility settings.
	Throughout, we include as baselines the classical Douglas--Rachford method (DR) and its non-stationary
	variant (ns-DR) \cite{lorenz2019non}, the method of alternating projections (MAP), and several modern
	relaxed/averaged projection schemes (e.g., GRAP/RAAR/AAMR) whenever applicable.
	We also include the stationary and non-stationary \carp schemes from \cite{shen2025composed} to highlight
	the additional gains enabled by the proposed generalization.
	
	\subsection{General setup}\label{subsec:general-setup}
	
	\paragraph{Environment.}
	All experiments are performed in \textsc{Matlab} R2023a on a desktop computer with an Intel Core Ultra 9 185H processor (2.30 GHz) and 32 GB of RAM.
	The same random seed is used for all methods in each trial to ensure reproducibility.
	Unless stated otherwise, all vectors are initialized by i.i.d.\ standard normal entries and then normalized.
	
	\paragraph{Stopping criteria.}
	For a tolerance $\varepsilon>0$, we stop when both a fixed-point residual and a feasibility residual are small:
	\begin{equation}\label{eq:stop-criteria}
		\mathrm{FPR}^k:=\frac{\|\z^{k+1}-\z^k\|}{\max\{1,\|\z^k\|\}}\le \varepsilon,
	\end{equation}
	We also impose a maximum iteration cap $k_{\max}$ and report ``fail'' if the stopping criterion is not
	met within $k_{\max}$ iterations.

	\paragraph{Parameter selection.}
	\begin{itemize}
	\item \textbf{Relaxation parameter $\mu$.}
		For methods using an outer relaxation, we ensure $\mu\in\bigl(0,\frac{2}{1+\gamma_{\max}}\bigr)$ when $\gamma$ varies, to satisfy the averagedness requirement. In the stationary runs we typically use
		$\mu=1$ unless stated otherwise.
		\item \textbf{Stationary \gcarp parameters.}
		We consider $(\theta,\eta)\in(0,1]$ and $\gamma\in(0,1)$.
		For subspace problems, we test two tuning strategies:
		(i) a theory-driven choice based on the minimax guideline in
		Theorem \ref{th:gcarp-minimax-gamma} of symmetric regime, and
		(ii) a grid-search choice on a coarse grid
		\[
		\theta\in\{0.5,0.6,\ldots,1.0\},\,\,
		\eta\in\{0.5,0.6,\ldots,1.0\},\,\,
		\gamma\in\{0.0,0.05,\ldots,0.95\},
		\]
		where the best parameters are selected by minimizing the observed iteration count on a small validation
		instance and then fixed for the reported tests.
        
	\item \textbf{Fully non-stationary ns-\gcarp.}
		In Algorithm~\ref{alg:nsfull} we set the parameter intervals
		\[
		\gamma_k\in[\gamma_{\min},\gamma_{\max}],\quad
		\theta_k\in[\theta_{\min},\theta_{\max}],\quad
		\eta_k\in[\eta_{\min},\eta_{\max}],
		\]
		and enforce summable increments as described in Remark~\ref{rmk:nsfull-updates}.
		Unless otherwise specified, we use
		\[
		\gamma_{\min}=0.05,\ \gamma_{\max}=0.95,\,\,
		\theta_{\min}=\eta_{\min}=0.5,\ \theta_{\max}=\eta_{\max}=1,
		\,\,
		\delta=0.5.
		\]
		The trial parameters $(\tilde\gamma_{k+1},\tilde\theta_{k+1},\tilde\eta_{k+1})$ are produced by the
		multiplicative rule in Remark~\ref{rmk:nsfull-updates} with
		\[
		\varsigma_\gamma=\varsigma_\theta=\varsigma_\eta=0.95,\qquad c_1=0.9,
		\]
		and then damped using $\alpha_k=(k+1)^{-(2+\delta)}$.
	\end{itemize}
	
	\subsection{Feasibility of two subspaces}\label{subsec:two-subspaces}
	
	When $X$ and $Y$ are linear subspaces, all compared schemes are linear iterations and their linear
	convergence factors can be predicted from spectral information, as detailed in Section~\ref{sec:gcarp-opt}. In the following, we use the canonical construction with
	\[
	n=100,\qquad p=q=50,\qquad p+q=n,
	\]
	so that only the $2\times2$ principal-angle blocks appear in the spectral decomposition.
	Let $\phi_1\le \cdots\le \phi_p\in(0,\pi/2]$ be the desired principal angles, where $\phi_i$ is used to avoid confusion with the algorithmic parameter $\theta\in(0,1]$.
	Define the subspaces
	\begin{equation}\label{eq:subspaces-canonical}
		X:=\mathrm{span}\{\ee_1,\ldots,\ee_p\},
		\qquad
		Y:=\mathrm{span}\{c_i \ee_i+s_i \ee_{p+i}\}_{i=1}^p,
	\end{equation}
	where $c_i=\cos\phi_i$, $s_i=\sin\phi_i$ and $\ee_i$ is the $i$-th unit vector.
    
	Then the principal angles between $X$ and $Y$ are exactly $\{\phi_i\}_{i=1}^p$, and the orthogonal projectors are
	\begin{equation}\label{eq:PX-PY-canonical}
		\bmP_X=
		\begin{bmatrix}
			\bmI_p & \bm0\\
			\bm0 & \bm0
		\end{bmatrix},
		\
		\bmP_Y=
		\begin{bmatrix}
			\bmC^2 & \bmC\bmS\\
			\bmC\bmS & \bmS^2
		\end{bmatrix},
		\
		\bmC=\mathrm{diag}(c_i)_{i=1}^p,\ \bmS=\mathrm{diag}(s_i)_{i=1}^p .
	\end{equation}
	In all subspace experiments, we further impose
	\begin{equation}\label{eq:angle-schedule}
		\phi_p=\frac{\pi}{2},
		\qquad
		\phi_1=\phi_F,
		\qquad
		\phi_i=\phi_F+\frac{i-1}{p-1}\Bigl(\frac{\pi}{2}-\phi_F\Bigr),\quad i=1,\ldots,p.
	\end{equation}
	
	We structure the
	experiments into three parts:
	\begin{enumerate}
		\item \textbf{Predicted rates:} compare the predicted linear convergence factors as functions of the Friedrichs angle.
		\item \textbf{Numerical illustration:} confirm that empirical slopes match the predicted $\xi(\cdot)$ and compare
		practical iteration counts.
		\item \textbf{Trajectory illustration:} visualize how the additional relaxation parameters $(\theta,\eta)$ change
		the spiral/rotation behavior on principal-angle planes.
	\end{enumerate}
	Throughout this subsection, we fix $\mu=1$ to focus on the intrinsic contraction behavior of the fixed-point maps. For DR on subspaces, relaxation does not improve the optimal linear rate; see \cite{bauschke2016optimal}.

	\subsubsection{Predicted rates}\label{subsubsec:subspaces-predicted}

    \begin{figure}[t]
    \centering
    
    \begin{minipage}{0.465\textwidth}
        \centering
        \includegraphics[width=\linewidth]{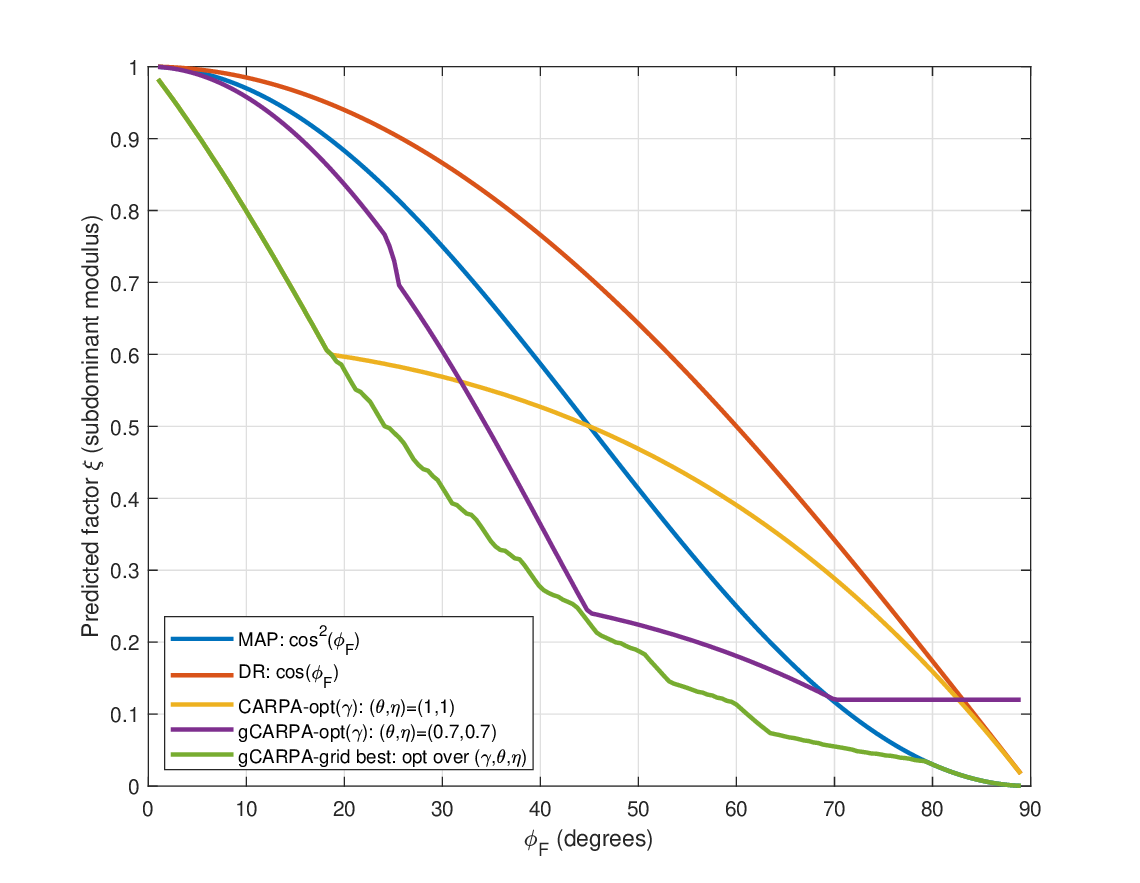}
        \caption{Predicted linear convergence factors on subspace feasibility problems with
			$n=100$, $p=q=50$, $\phi_p=\pi/2$, and angles $\{\phi_i\}$ defined by \eqref{eq:angle-schedule}.}
		\label{fig:ratecmp-gcarp}
    \end{minipage}
    \hspace{1em}
    \begin{minipage}{0.48\textwidth}
        \vspace{-0.9cm}
        \centering
        \includegraphics[width=1.05\linewidth]{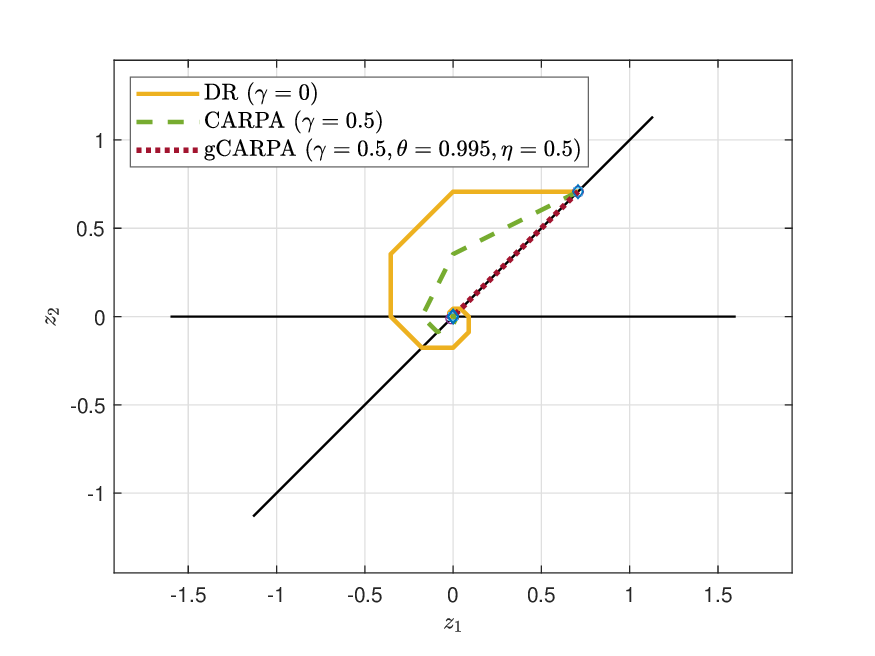}
        \vspace{-0.5cm}
        \caption{Trajectory on a single principal-angle plane with $\phi_F=45^\circ$.}
		\label{fig:subspaces-traj}
    \end{minipage}
    \end{figure}   
    
	We first compare predicted linear convergence factors on the subspace model
	\eqref{eq:subspaces-canonical}--\eqref{eq:angle-schedule}.
	For each method, the predicted factor is taken to be the subdominant-eigenvalue modulus
	$\xi(\cdot)$ defined in Section~\ref{sec:maths_bg}, i.e., the largest modulus of eigenvalues excluding~$1$.
	In our setting $X\cap Y=\{0\}$, hence the eigenvalue~$1$ does not occur and $\xi(\cdot)$ is simply the
	spectral radius.
	\paragraph{Predicted rates for MAP and DR.}
	For completeness we include the classical baselines:
	\begin{itemize}
		\item \textbf{MAP:} $\z^{k+1}=\proj_Y\proj_X\z^k$ has predicted factor $\xi_{\mathrm{MAP}}=\max_i \cos^2(\phi_i)$,
		so under \eqref{eq:angle-schedule} it equals $\cos^2(\phi_F)$.
		\item \textbf{DR:} $\z^{k+1}=\tfrac12(\Id+\calR_Y\calR_X)\z^k$ has predicted factor $\xi_{\mathrm{DR}}=\max_i \cos(\phi_i)$,
		so under \eqref{eq:angle-schedule} it equals $\cos(\phi_F)$; see \cite{bauschke2014rate,bauschke2016optimal}.
	\end{itemize}
	
	\paragraph{Predicted best over $\gamma$ rates for \carp and \gcarp.}
	We compare three envelopes with $\mu=1$ and the principal-angle list \eqref{eq:angle-schedule}:
	\begin{enumerate}
		\item \textbf{\carp-opt($\gamma$):} stationary \carp corresponds to $\theta=\eta=1$ in \gcarp.
		For each $\phi_F$ we compute the minimax-optimal $\gamma$ over $[0,1)$ using the
		critical-damping candidates from the \carp theory \cite{shen2025composed}, and set the predicted factor to
		\[
		\xi_{\mathrm{\carp}}^{\star}(\phi_F):=
		\min_{\gamma\in[0,1)}\ \max_{1\le i\le p}\rho\bigl(\bmM_i^{\mathrm{\carp}}(\gamma)\bigr).
		\]
		\item \textbf{\gcarp-opt($\gamma$) at fixed $(\theta,\eta)$:} we fix a representative pair
		\begin{equation}\label{eq:theta-eta-representative}
			(\theta,\eta)=(\theta_0,\eta_0):=(0.7,0.7),
		\end{equation}
		and compute the minimax-optimal $\gamma$ using the endpoint critical damping recipe
		in Theorem \ref{th:gcarp-minimax-gamma}.
		Denote the resulting envelope by $\xi_{\mathrm{\gcarp}}^{\star}(\phi_F;\theta_0,\eta_0)$.
		\item \textbf{\gcarp-grid best:} to visualize the potential gain of the generalization, we also report
		a coarse-grid lower envelope
		\[
		\xi_{\mathrm{\gcarp}}^{\mathrm{grid}}(\phi_F):=
		\min_{(\theta,\eta,\gamma)\in\Theta\times H\times \Gamma}\ \max_{1\le i\le p}\rho\bigl(\bmM_i(\gamma,\theta,\eta)\bigr),
		\]
		where $\Theta=H=\{0.5,0.6,0.7,0.8,0.9,1.0\},\,
		\Gamma=\{0,0.05,0.10,\ldots,0.95\}.$
		This curve is not claimed to be globally optimal; rather, it provides a practical upper bound on the optimal rate.
	\end{enumerate}

	We sweep $\phi_F$ on a fine grid in $(0,\pi/2)$, build the angle list \eqref{eq:angle-schedule}, and compute the
	predicted factors above. Figure~\ref{fig:ratecmp-gcarp} plots the predicted subdominant modulus $\xi$ as a function of the Friedrichs angle $\phi_F$.
	Besides the fixed-parameter curve gCARPA-opt$(\gamma)$ at $(\theta,\eta)=(0.7,0.7)$, we also report a grid-based
	gCARPA-grid best curve obtained by approximately optimizing over $(\gamma,\theta,\eta)$ for each $\phi_F$.
	The latter provides an empirical lower envelope of attainable linear factors within our parameter family and, on a broad
	range of intermediate angles, yields a visibly smaller $\xi$ than CARPA-opt$(\gamma)$, indicating that the additional
	relaxation degrees of freedom $(\theta,\eta)$ can further reduce the worst-case contraction predicted by the spectral model.
	
	To make the comparison explicit at representative angles, we also tabulate predicted factors at
	four values of $\phi_F$:
	\[
	\phi_F\in\Bigl\{\frac{\pi}{36},\frac{\pi}{18},\frac{\pi}{12},\frac{\pi}{9}\Bigr\},
	\quad\text{i.e., }5^\circ,10^\circ,15^\circ,20^\circ.
	\]
	Table~\ref{tab:subspaces-predicted} reports the predicted subdominant modulus.
	As expected, MAP and DR have factors close to $1$ for small $\phi_F$, reflecting slow convergence when the two subspaces are nearly parallel.
	In contrast, \carp with the minimax choice of $\gamma$ substantially reduces $\xi$, and the improvement becomes more pronounced as $\phi_F$ increases. For \gcarp, the behavior depends strongly on the choice of $(\theta,\eta)$.
	With the fixed setting $(\theta,\eta)=(0.7,0.7)$, the predicted factor remains close to the MAP/DR baselines, indicating that this choice does not provide effective damping of the rotation component for these angles.
	However, allowing $(\gamma,\theta,\eta)$ to vary and selecting the best combination on a grid yields factors comparable to \carp and even slightly better at $\phi_F=20^\circ$.
	This suggests that the additional degrees of freedom $(\theta,\eta)$ can further accelerate convergence, but only when they are tuned to the underlying principal-angle geometry. 
	
	\begin{table}[!htb]
		\centering
		\caption{Predicted linear factors for the subspace model
			\eqref{eq:subspaces-canonical}--\eqref{eq:angle-schedule} with $n=100$, $p=q=50$ and $\mu=1$.}
		\label{tab:subspaces-predicted}
		\begin{tabular}{c|cccc}
			\hline
			$\phi_F$ & $5^\circ$ & $10^\circ$ & $15^\circ$ & $20^\circ$\\
			\hline
			MAP  & 0.9924 & 0.9698 & 0.9330 & 0.8830 \\
			DR     &0.9962 & 0.9848 & 0.9659 & 0.9397\\
			\carp-opt$(\gamma)$   & 0.9056 & 0.7988 & 0.6830 & 0.5967 \\
			\gcarp-opt$(\gamma)$ &0.9894 & 0.9578 & 0.9062 & 0.8362\\
			\gcarp-grid best& 0.9064 & 0.7999 & 0.6840 & 0.5788\\
			\hline
		\end{tabular}
	\end{table}
    
	\subsubsection{Numerical illustration}\label{subsubsec:subspaces-numerical}
	We next verify the predicted rates by running the methods on the same subspace instances and plotting
	the decay of the fixed-point residual
	\[
	\mathrm{FPR}^k=\frac{\|\z^{k+1}-\z^k\|}{\max\{1,\|\z^k\|\}}.
	\]
	We use the canonical subspaces \eqref{eq:subspaces-canonical} with angle schedule \eqref{eq:angle-schedule}. We report results at two small and two moderate-to-large Friedrichs angles,
	\[
	\phi_F\in\Bigl\{\frac{\pi}{18},\frac{\pi}{9},\frac{\pi}{4},\frac{\pi}{3}\Bigr\},
	\quad\text{i.e., }10^\circ,20^\circ,45^\circ,60^\circ.
	\]
	The initialization is $\z^0\sim\mathcal N(\bm0,\bmI_n)$ normalized to $\|\z^0\|=1$.
	We stop when $\mathrm{FPR}^k\le 10^{-12}$ or when the maximum number of iterations $k_{\max}=5000$ is reached.

	Figure~\ref{fig:subspaces-curves} reports the decay of the fixed-point residual
	$\mathrm{FPR}^k$ for DR, \carp, and \gcarp. The main purpose of these plots is to validate that the asymptotic linear convergence is governed by the eigen-structure characterized in Section~\ref{subsubsec:subspaces-predicted}.
	For each run we estimate an empirical linear factor $\hat r$ by fitting a line to
	$\log(\mathrm{FPR}^k)$ over a tail window where the decay is approximately geometric, and compare $\hat r$ with the predicted subdominant-eigenvalue modulus $\xi(\A)$ computed from the principal-angle
	block eigenvalues.
	For instance, at $\phi_F=10^\circ$ the theory predicts $\xi_{\mathrm{DR}}=\cos(10^\circ)\approx 0.9848$ and the fitted value reported in the legend gives
	$\hat r\approx 0.985$ for DR. Similarly close agreement is observed for \carp and \gcarp, up to the usual
	transient phase at early iterations.
	
	\begin{figure}[t]
		\centering
		\begin{subfigure}[t]{0.48\linewidth}
			\centering
			\includegraphics[width=\linewidth]{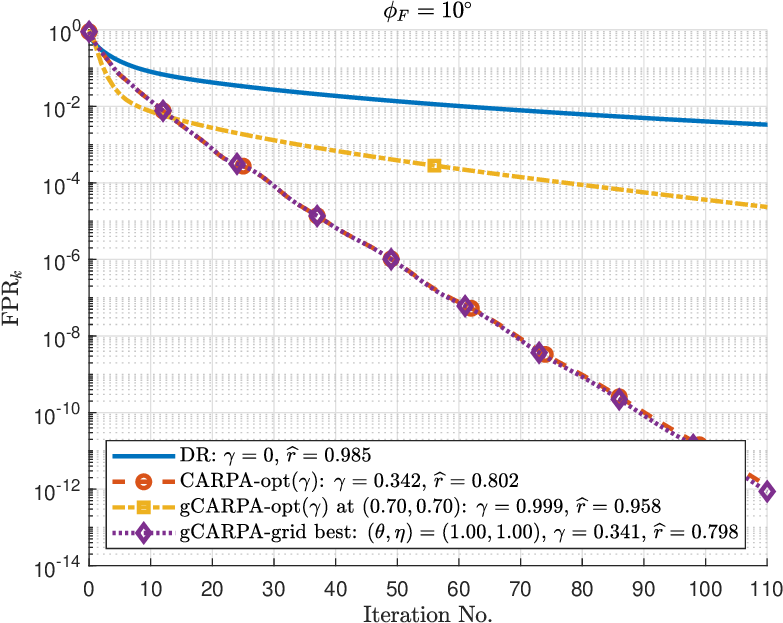}
			\caption{$\phi_F=10^\circ$.}
		\end{subfigure}\hfill
		\begin{subfigure}[t]{0.48\linewidth}
			\centering
			\includegraphics[width=\linewidth]{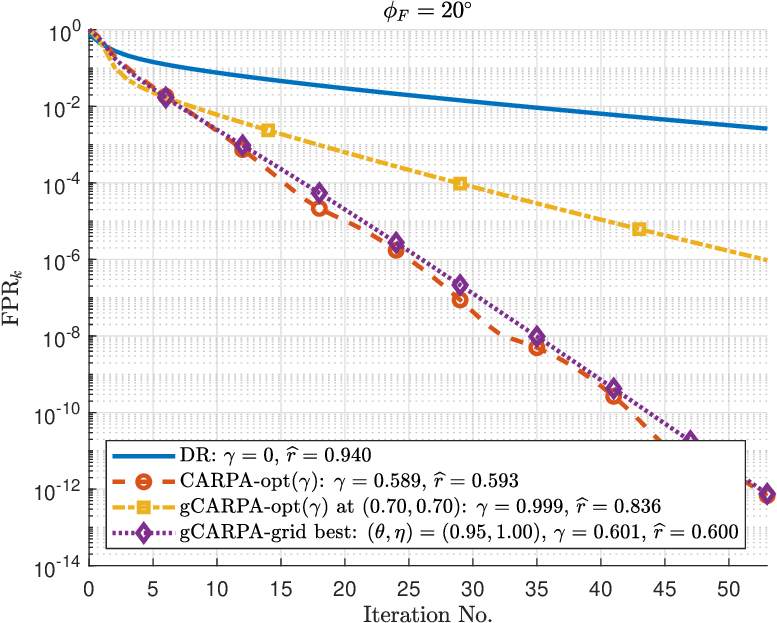}
			\caption{$\phi_F=20^\circ$.}
		\end{subfigure}
		
		\vspace{1mm}
		
		\begin{subfigure}[t]{0.48\linewidth}
			\centering
			\includegraphics[width=\linewidth]{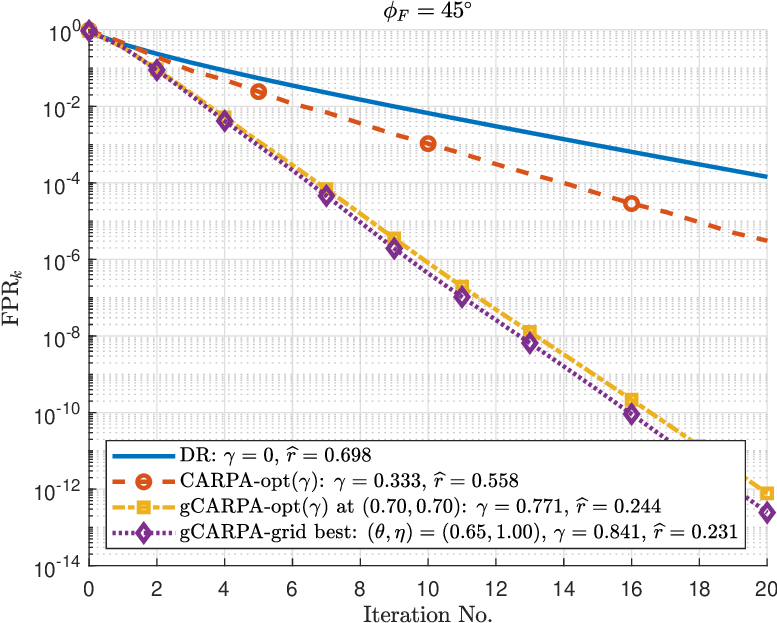}
			\caption{$\phi_F=45^\circ$.}
		\end{subfigure}\hfill
		\begin{subfigure}[t]{0.48\linewidth}
			\centering
			\includegraphics[width=\linewidth]{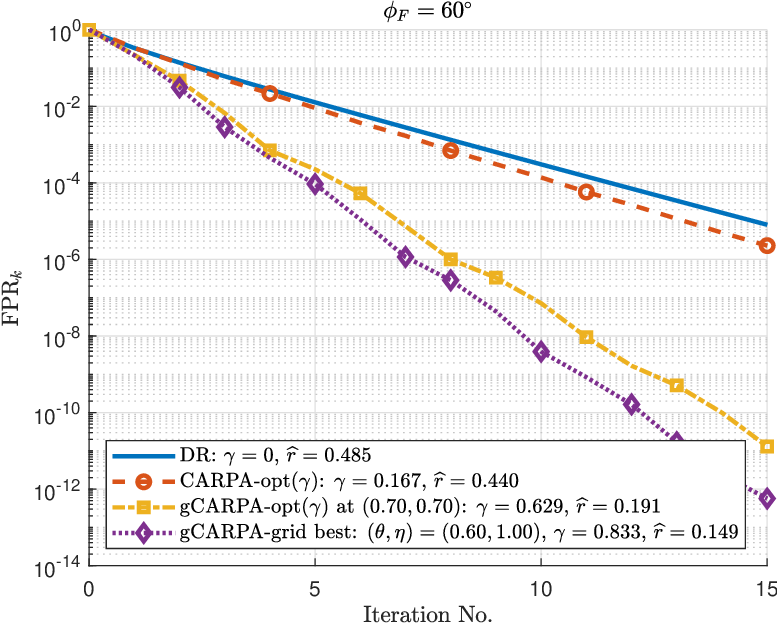}
			\caption{$\phi_F=60^\circ$.}
		\end{subfigure}
		\caption{Subspace feasibility with $n=100$, $p=q=50$, $\phi_p=\pi/2$, and angles given by \eqref{eq:angle-schedule}.
			Log-scale plots of $\mathrm{FPR}^k=\|\z^{k+1}-\z^k\|/\max\{1,\|\z^k\|\}$ for DR, \carp-opt$(\gamma)$ with $\theta=\eta=1$, \gcarp-opt$(\gamma)$ with $\theta=\eta=0.7$,
			and \gcarp-grid best $(\gamma,\theta,\eta)$. }
		\label{fig:subspaces-curves}
	\end{figure}
	
	Beyond theory validation, the plots in Figure~\ref{fig:subspaces-curves} provide a direct performance comparison. As expected, \carp consistently improves upon DR by substantially reducing the asymptotic factor,
	which is reflected by the visibly steeper tail slopes on the log-scale plots.
	For \gcarp, the benefit is angle- and parameter-dependent: a fixed choice such as $(\theta,\eta)=(0.7,0.7)$ is not
	uniformly competitive, which can be close to DR for small $\phi_F$. However, for larger angles, e.g., $\phi_F=45^\circ,60^\circ$, it yields remarkably smaller factors than \carp in our tests.
	Moreover, the grid-best tuning demonstrates that allowing $(\theta,\eta)$ to vary can match \carp in the small-angle regime
	and can outperform it in the large-angle regime, highlighting the additional flexibility and practical benefits of the generalized relaxation parameters.
	
	To complement the asymptotic-rate plots, we report in Table \ref{tab:subspaces-iter} the number of iterations
	required to reach the common stopping tolerance $\mathrm{FPR}^k\le 10^{-12}$.
	In particular, DR becomes dramatically slower when the Friedrichs angle is small, e.g., $10^\circ$,
	whereas \carpa reduces the iteration count by more than an order of magnitude.
	Moreover, the additional degrees of freedom in \gcarp translate into further savings in the large-angle regimes of $\phi_F\in\{45^\circ,60^\circ\}$, and \gcarp-grid best attains the target tolerance in substantially fewer
	iterations than \carpa, illustrating the benefit of tuning $(\theta,\eta)$ beyond the classical $\theta=\eta=1$ case.
	
	\begin{table}[h]
		\centering
		\caption{Iterations to reach $\mathrm{FPR}^k\le 10^{-12}$ on the subspace model
			\eqref{eq:subspaces-canonical}--\eqref{eq:angle-schedule} with $n=100$, $p=q=50$, $\mu=1$.}
		\label{tab:subspaces-iter}
		\begin{tabular}{c|cccc}
			\hline
			$\phi_F$ & $10^\circ$ & $20^\circ$ & $45^\circ$ & $60^\circ$\\
			\hline
			DR    & \texttt{1532} & \texttt{400} & \texttt{74} & \texttt{39}\\
			\carp-opt$(\gamma)$  & \texttt{112} & \texttt{54} & \texttt{48} & \texttt{35}\\
            \gcarp-opt($\gamma$) & \texttt{505} & \texttt{131} & \texttt{21} & \texttt{18}\\
			\gcarp-grid best & \texttt{111} & \texttt{54} & \texttt{21} & \texttt{16}\\
			\hline
		\end{tabular}
	\end{table}
	
	\subsubsection{Trajectory illustration}\label{subsubsec:subspaces-trajectory}
	
	Finally, we visualize the trajectory behavior on a single principal-angle plane.
	For subspace feasibility, the DR fixed-point iteration typically exhibits a pronounced
	spiral/rotation component; see, e.g., \cite{bauschke2014rate,poon2019trajectory}, which is caused by
	complex conjugate eigenvalues of the corresponding $2\times2$ principal-angle block.
	One motivation for \carp in \cite{shen2025composed} is to mitigate this rotational behavior by blending
	a DR-type mapping with a projection--reflection-type mapping.
	
	Our generalization introduces two additional relaxation parameters $(\theta,\eta)$, which provide
	an extra degree of freedom to control the balance between rotation and damping on each principal-angle plane.
	To highlight this effect clearly, in the plot below we fix the same mixing parameter $\gamma$ for both \carp
	and \gcarp, and only change $(\theta,\eta)$. In other words, the visualization is designed to show that
	even under a fixed $\gamma$, adjusting $(\theta,\eta)$ can substantially reduce the rotation component
	by moving the block spectrum from a complex-eigenvalue regime to a damped real-eigenvalue regime.

	We consider two lines $X$ and $Y$ in $\mathbb{R}^2$ forming a principal angle $\phi_F=45^\circ$.
	We initialize
	\[
	\z^0=\frac{1}{\sqrt{2}}(1,1),
	\]
	and plot the trajectory $\{\z^k\}$ in the $(\z_1,\z_2)$ plane. Figure~\ref{fig:subspaces-traj} visualizes the fixed-point iterates on a single principal-angle plane.
	As predicted by the spectral analysis for subspaces, the DR mapping typically
	induces a noticeable rotation component. This rotation component
	is one of the reasons why DR can be inefficient on subspace feasibility instances.
	
	In contrast, the proposed schemes reduce the rotation and produce a more damped approach to the
	intersection. \carp already mitigates the rotational effect by blending a DR-type map with a
	projection--reflection-type step, while \gcarp further introduces the relaxation parameters $(\theta,\eta)$, providing additional degrees of freedom to adjust the balance between rotation and damping on the
	principal-angle plane. 
    
	\subsection{Feasibility of a ball tangent to a line}\label{subsec:ball-line}
	
	We next consider a simple yet notoriously difficult nonlinear feasibility instance in $\bbR^2$:
	the intersection of the unit Euclidean ball and an affine line tangent to it.
	This geometry is known to be challenging for projection-type schemes, as the two sets meet
	with zero angle at the unique intersection point, leading to a long slow tail phase.
	In particular, the practical winner is often problem- and tolerance-dependent, and non-stationary
	steps can be significantly more effective than any fixed parameter choice.
	
	Let	$	Y:=\{\x\in\bbR^2:\ \|\x\|\le 1\},
	\,\,
	X:=\{\x\in\bbR^2:\ \bma^\top \x = 1\},
	\,\,
	\bma:=\tfrac{1}{\sqrt2}(1,1)^\top.$
	Then $X$ is tangent to $Y$ and $X\cap Y=\{\x^\star\}$ with $\x^\star=\bma$.
	For each trial we draw $\uu\sim\mathcal N(\bm0,\bmI_2)$, normalize it to $\|\uu\|=1$ and set $	\z^0=\x^\star+10\uu.$
	We run each method until the fixed-point residual $\mathrm{FPR}^k$
	falls below a target tolerance $\mathrm{tol}\in\{10^{-4},10^{-6},10^{-8},10^{-10}\}$, or until
	$k_{\max}=10^4$ iterations are reached. Reported iteration counts are averaged over $10^4$ random starts.
	
	We compare DR, ns-DR from \cite{lorenz2019non}, MAP and GRAP.
	We also include \carp, ns-\carp and the generalized variants: \gcarp, ns-\gcarp.

	Table~\ref{tab:ball-line-iter} summarizes the averaged iteration counts.
	As expected for the tangency geometry, stationary projection/reflection schemes exhibit a pronounced slow tail:
	MAP and GRAP are orders of magnitude slower and frequently approach the iteration cap at tight tolerances,
	and stationary \carp also deteriorates rapidly as $\mathrm{tol}$ decreases.
	In contrast, allowing non-stationarity is crucial on this instance: ns-DR consistently achieves the smallest
	iteration counts across all tolerances, and ns-\carp provides a substantial improvement over its stationary
	counterpart, e.g., from $3028$ to $231$ at $\mathrm{tol}=10^{-6}$. \gcarp does not automatically bring gains here; with the present tuning it
	essentially matches DR across all tolerances, indicating that extra relaxation parameters $(\theta,\eta)$ must be adapted in a geometry-aware way to be beneficial.
	Finally, the fully non-stationary scheme ns-\gcarp shows its advantage mainly in the high-accuracy regime. While it is comparable to ns-\carp at coarse tolerances, it becomes faster when $\mathrm{tol}$ is stringent,
	reducing the average iteration count from $499$ to $411$ at $\mathrm{tol}=10^{-8}$ and from $727$ to $558$ at
	$\mathrm{tol}=10^{-10}$. This suggests that adapting $(\theta_k,\eta_k)$ in addition to $\gamma_k$ can further
	mitigate the tail slowdown near the tangency point.
	
	\begin{table}[!htbp]
		\centering
		\caption{Ball tangent to a line in $\bbR^2$.
			Comparison of number of steps required to reach stopping tolerance.
			( -- means the method hits $k_{\max}$ for at least one start and the average is not reported).}
		\label{tab:ball-line-iter}
		\begin{tabular}{c|cccccccc}
			\hline
			$\mathrm{tol}$ & DR & ns-DR & MAP & GRAP & \carp & ns-\carp & \gcarp & ns-\gcarp \\
			\hline
			$10^{-4}$  & 24   & 15   & 292  & 178  & 103  & 62   & 24   & 63  \\
			$10^{-6}$  & 169  & 21   & 6293 & 4499 & 3028 & 231  & 169  & 199 \\
			$10^{-8}$  & 711  & 28   & 9997 & 9947 & 9142 & 499  & 711  & 411 \\
			$10^{-10}$ & 1036 & 35   & --   & 9992 & 9810 & 727  & 1036 & 558 \\
			\hline
		\end{tabular}
	\end{table}
    
	\subsection{Sparse linear inverse problems}\label{sec:cs}
	We consider the compressed-sensing feasibility model
	\begin{equation}\label{eq:bp-gcarp}
		X=\{\x\in\mathbb{R}^n:\ \A\x=\bmb\},
		\qquad
		Y=\{\x\in\mathbb{R}^n:\ \|\x\|_1\le c\},
	\end{equation}
	where $\A\in\mathbb{R}^{m\times n}$ has full row rank. Given a $\kappa$-sparse vector
	$\x_{\mathrm{sol}}\in\mathbb{R}^n$, we set $\bmb=\A\x_{\mathrm{sol}}$ and $c=\|\x_{\mathrm{sol}}\|_1$.
	The projection onto the affine set $X$ is
	\[
	\proj_X(\w)=\w-\A^\top(\A\A^\top)^{-1}(\A\w-\bmb),
	\]
	and the projection onto the $\ell_1$-ball $Y$ can be computed efficiently  \cite{condat2016fast,duchi2008efficient}.
	
	In all methods tested below, the dominant cost per iteration is the application of $\A$ and $\A^\top$
	together with one $\ell_1$-ball projection. Since $\A\A^\top\in\mathbb{R}^{m\times m}$ is fixed and
	positive semi-definite, we precompute a Cholesky factorization of $\A\A^\top$ once, and apply $(\A\A^\top)^{-1}$ via two
	triangular solves when evaluating $\proj_X(\cdot)$.
	
	\paragraph{A toy example}\label{sec:cs-toy}
	We first present a toy instance to compare the methods and to visualize the effect of the additional
	relaxation parameters in \gcarp.
	We set $(m,n)=(500,2000)$ and $\kappa=50$. The entries of $\A$ are sampled from the standard normal
	distribution and scaled by $1/\sqrt{m}$. The sparse ground truth $\x_{\mathrm{sol}}$ is generated by
	selecting a random support of size $\kappa$ and assigning i.i.d.\ standard normal entries on that support;
	then $\bmb=\A\x_{\mathrm{sol}}$ and $c=\|\x_{\mathrm{sol}}\|_1$, so that $\x_{\mathrm{sol}}\in X\cap Y$.
	
	We initialize $\z^0\sim\mathcal{N}(\bm0,\bmI_n)$ and normalize it to $\|\z^0\|=1$.
	We report:
	(i) the fixed-point residual $\mathrm{FPR}^k=\frac{\|\z^{k+1}-\z^k\|}{\max\{1,\|\z^k\|\}},$ shown on a logarithmic scale; and
	(ii) the support size of the $\ell_1$-projection output used by each method, i.e.,
	$\mathrm{card}(\mathrm{supp}(\y^k))$ where $\y^k:=\proj_Y(\cdot)$ is the $Y$-projection produced in
	the corresponding iteration.
	Intuitively, once the iterates enter the correct face of the $\ell_1$-ball, the support size tends to stabilize
	near $\kappa$, while oscillations in $\mathrm{card}(\mathrm{supp}(\y^k))$ reflect active-set changes of the
	$\ell_1$-projection during the transient phase.

	Figure~\ref{fig:cs-toy} summarizes the results.
	The log-scale $\mathrm{FPR}^k$ curves in Figure~\ref{fig:cs-toy}(a) show a clear separation between MAP and the projection--reflection-type methods (GRAP/\carpa/\gcarp and their non-stationary variants), which reach
	the numerical accuracy floor within a few hundred iterations on this instance.
	In Figure~\ref{fig:cs-toy}(b), MAP exhibits a long plateau with a large support before the $\ell_1$-projection output
	eventually sparsifies, whereas the faster methods identify a sparse support much earlier.
	Comparing \carpa with \gcarp, the additional relaxation parameters $(\theta,\eta)$ provide extra freedom to reduce
	the transient oscillations in the $Y$-step support and to accelerate the drop of $\mathrm{FPR}^k$ on this instance.
	Similarly, the ns-gCARPA can be viewed as a robustness mechanism: it retains the
	projection-only structure while allowing the method to adaptively move toward a favorable parameter regime without
	requiring a hand-picked fixed triple.
	
	\begin{figure}[t]
		\centering
		\subfloat[Convergence of $\mathrm{FPR}^k$]{\includegraphics[width=0.45\textwidth]{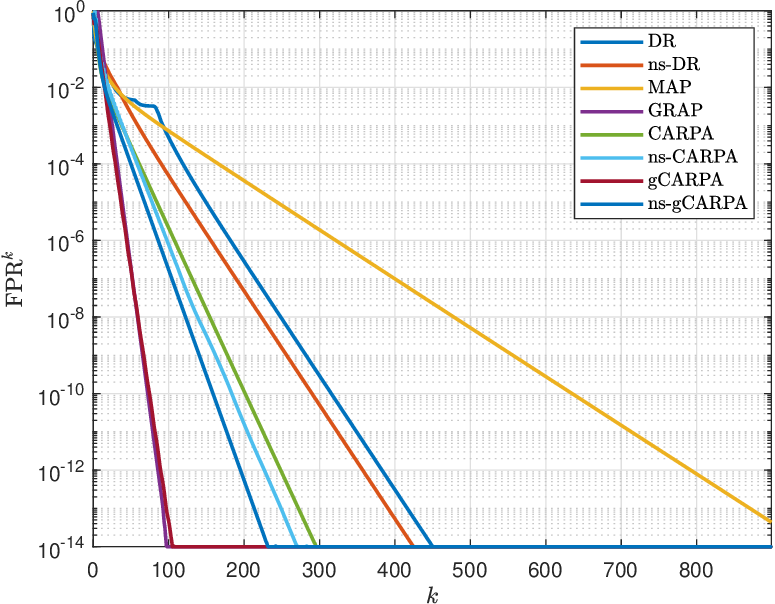}}
		\hskip3mm
		\subfloat[Support size of the $P_Y(\cdot)$ output]{\includegraphics[width=0.45\textwidth]{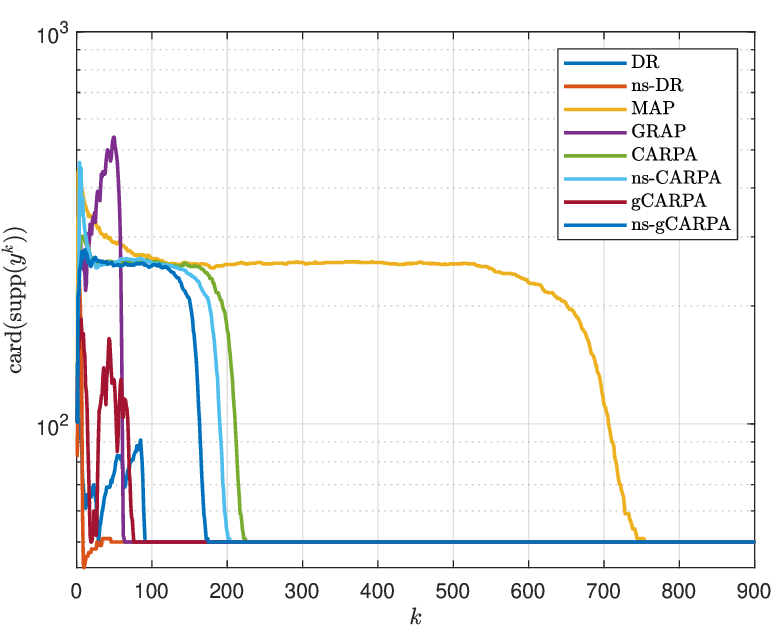}}
		\caption{Toy sparse linear inverse instance with $(m,n)=(500,2000)$ and $\kappa=50$.
			(a) Log-scale fixed-point residual $\mathrm{FPR}^k$.
			(b) Evolution of the support size $\mathrm{card}(\mathrm{supp}(\y^k))$ of the $\ell_1$-projection output
			$\y^k=P_Y(\cdot)$ used by each method.}
		\label{fig:cs-toy}
	\end{figure}

	\paragraph{Realistic examples}\label{sec:cs-real}
    We next consider more realistic sensing models commonly used in compressed sensing.
	Let $\bmf\in\mathbb{R}^n$ be a signal that admits a sparse representation under a dictionary $\bmB\in\mathbb{R}^{n\times n}$,
	i.e., $\bmf=\bmB\x_{\mathrm{sol}}$ for some sparse $\x_{\mathrm{sol}}$.
	Given a measurement operator $\bmM\in\mathbb{R}^{m\times n}$ and measurements $\bmb=\bmM\bmf=\bmM\bmB\x_{\mathrm{sol}}$,
	recovering $\bmf$ from $\bmb$ can be formulated as \eqref{eq:bp-gcarp} with $\bmA=\bmM\bmB$ and $c=\|\x_{\mathrm{sol}}\|_1$.
	We adopt four standard problem settings summarized in Table~\ref{tab:cs-p-gcarp}; see, e.g., \cite{van2009algorithm} for details.

    \begin{table}[b]
		\centering
        \caption{Problem settings for realistic sparse linear inverse experiments.}
		\begin{tabular}{|c|c|c|c|c|}
			\hline
			problem & $(m,n)$ & $\kappa$ & $\bmB$ & $\bmM$\\ \hline
			1 & (1024,2048) & 120 & DCT & Dirac\\ \hline
			2 & (600,2560)  & 20  & Id  & Gaussian\\ \hline
			3 & (256,1024)  & 32  & Id  & Gaussian\\ \hline
			4 & (200,1000)  & 3   & DCT & Restriction\\ \hline
		\end{tabular}
		\label{tab:cs-p-gcarp}
	\end{table}
	We compare the same set of methods under the same parameter choices as in the toy example.
	For each problem, we plot the decay of the fixed-point residual
	$\mathrm{FPR}^k$, as shown in Figure~\ref{fig:cs-real}.
	In all cases, the curves exhibit an eventually linear regime,
	which is consistent with the averaged-operator framework used in our convergence analysis.
	
	\begin{figure}[t]
		\centering
		\subfloat[Problem 1]{\includegraphics[width=0.465\textwidth]{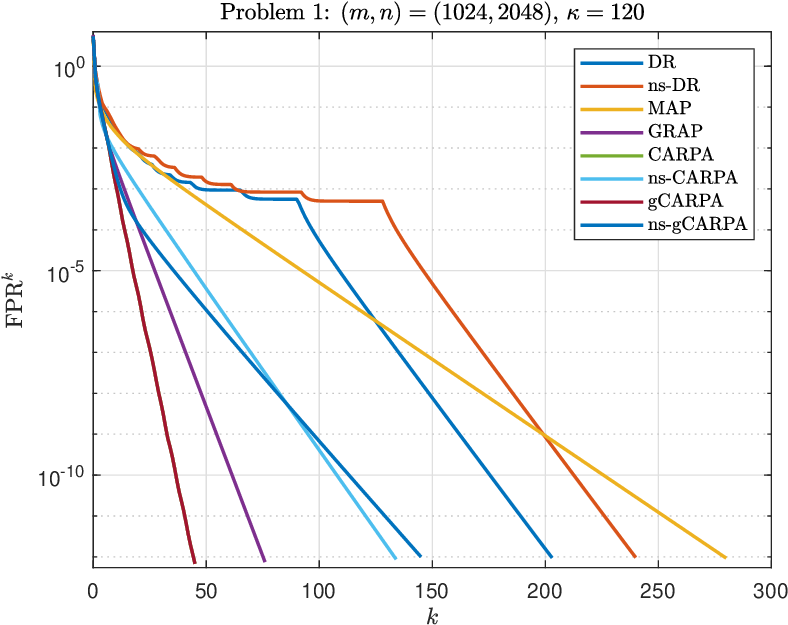}\label{fig:CS-1}}
		\hskip3mm
		\subfloat[Problem 2]{\includegraphics[width=0.465\textwidth]{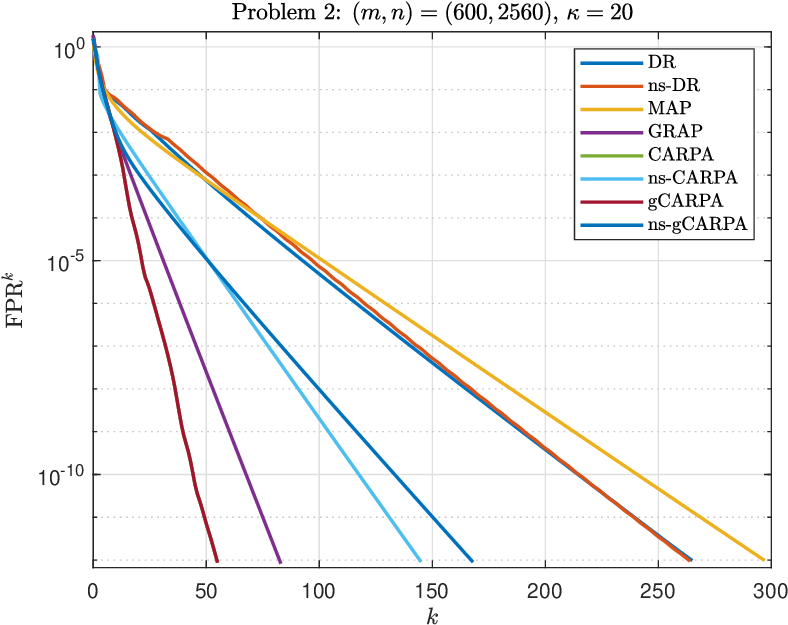}\label{fig:CS-2}}
		\\
		\subfloat[Problem 3]{\includegraphics[width=0.465\textwidth]{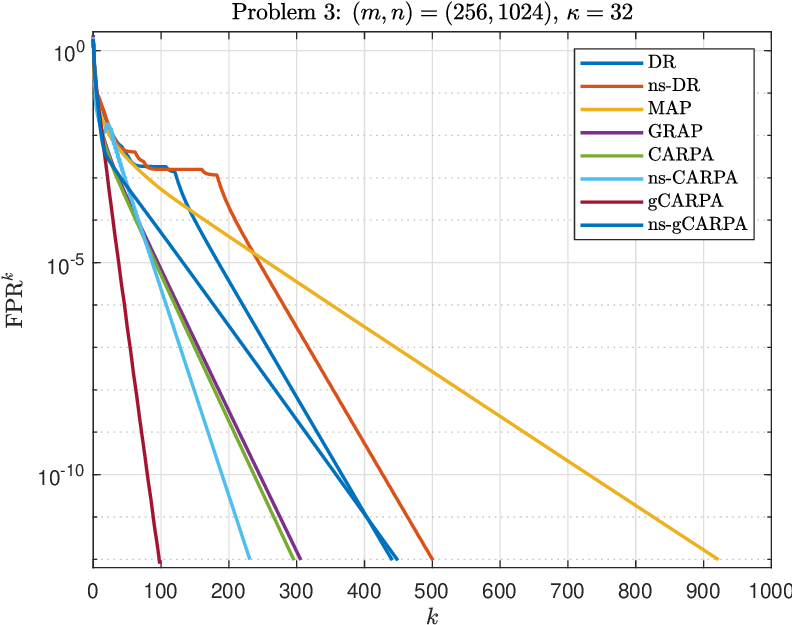}\label{fig:CS-3}}
        \hskip3mm
		\subfloat[Problem 4]{\includegraphics[width=0.465\textwidth]{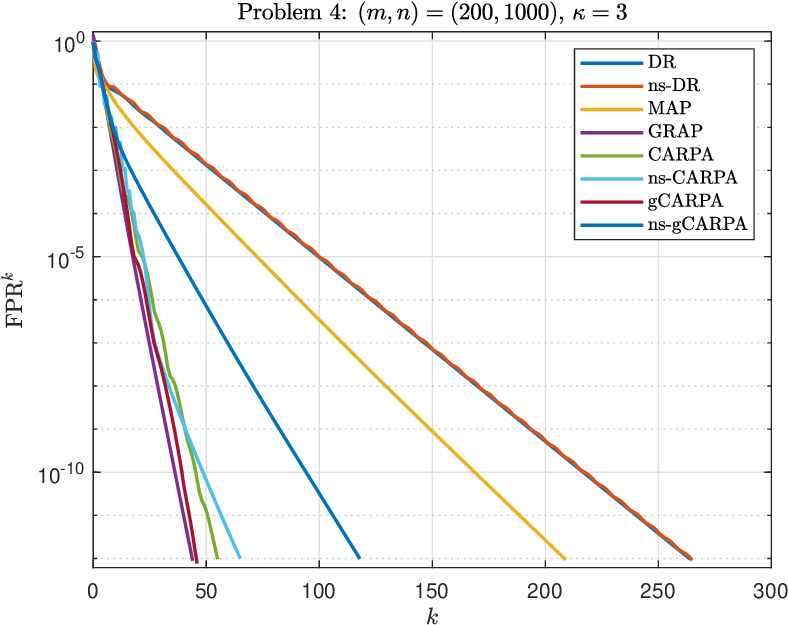}\label{fig:CS-4}}
		\caption{Realistic sparse linear inverse instances: log-scale $\mathrm{FPR}^k$ curves of different methods.}
		\label{fig:cs-real}
	\end{figure}
    
	Figure \ref{fig:cs-real} highlights two qualitative features that are typical for large-scale sparse linear inverse feasibility.
	First, the relative performance of the methods is strongly instance dependent.
	For example, in Problem 1, the DR/ns-DR trajectories display a noticeable transient behavior, characterized by a plateau-like behavior before the onset of final decay, whereas projection--reflection based schemes
	(GRAP, \carpa/ns-\carpa) enter the fast linear regime earlier.
	In contrast, in Problems~2--4 the curves are closer to clean linear decay throughout most iterations,
	and the gap among methods is primarily reflected by the slopes of the log-scale lines.
	
	Second, across all four problems, \carpa and especially ns-\carpa remain among the most competitive schemes,
	in agreement with the conclusions reported in \cite{shen2025composed} for these benchmark settings.
	Our generalized variants are at least competitive with the baseline methods on these instances:
	\gcarp improves over or matches \carpa in several regimes, e.g., Problems 1 and 3,
	and ns-\gcarp is consistently stable and competitive,
	often tracking the best-performing curve once the iterates have entered the asymptotic regime.
	This behavior supports the main algorithmic motivation of \gcarp:
	the extra relaxation parameters $(\theta,\eta)$ and their non-stationary adaptation provide additional degrees of freedom
	to better match the local geometry induced by the sensing model $(\bmB,\bmM)$, without sacrificing the simplicity of projection steps.

    \begin{table}[h]
		\centering
		\caption{Iterations to reach $\mathrm{FPR}^k\le 10^{-12}$ for the realistic sparse linear inverse instances in Table \ref{tab:cs-p-gcarp}.}
		\label{tab:cs-real-iter}
		\begin{tabular}{c|cccccccc}
			\hline
			problem & DR & ns-DR & MAP & GRAP & \carpa & ns-\carpa & \gcarp & ns-\gcarp\\
			\hline
			1 & 204 & 241 & 281 & 77  & 46  & 135 & 46  & 146 \\
			2 & 266 & 265 & 298 & 84  & 56  & 146 & 56  & 169 \\
			3 & 442 & 502 & 923 & 307 & 297 & 232 & 99  & 450 \\
			4 & 265 & 266 & 210 & 45  & 56  & 66  & 47  & 119 \\
			\hline
		\end{tabular}
	\end{table}

    Table \ref{tab:cs-real-iter} reports the number of iterations required to reach $\mathrm{FPR}^k \le 10^{-12}$ on four realistic sparse linear inverse instances.
    Overall, the performance is clearly instance dependent. On Problems 1--2, \carpa and \gcarp achieve the best results, requiring 46 and 56 iterations, respectively. On Problem 4, GRAP is the fastest with 45 iterations, followed closely by \gcarp with 47 iterations.
    The most significant improvement of the generalized scheme is observed on Problem 3, where \gcarp converges in 99 iterations, substantially outperforming \carpa and ns-\carpa, which require 297 and 232 iterations, respectively. This suggests that the additional relaxation degrees of freedom can be advantageous under certain sensing geometries.
    Finally, the fully non-stationary variant ns-\gcarp does not consistently outperform \gcarp in these experiments, indicating that aggressive parameter variation may introduce additional transient effects in $\ell_1$-ball feasibility problems.
	
	\section{Conclusion}\label{sec:conclusion}
	In this paper, we proposed the generalized composed alternating relaxed projection algorithm (\gcarp) for the two-set convex feasibility problem,
	together with its non-stationary variants ns-gCARPA.
	The algorithm unifies and extends several classical fixed-point iterations, and in particular recovers \carpa
	as a special case.
	On the theoretical side, we established their global convergence under standard assumptions.
	For the subspace model, we derived a principal-angle spectral characterization that yields explicit predicted linear factors
	and supports principled parameter selection, including a minimax/critical-damping recipe in a symmetric regime.
	Numerically, experiments on subspaces, the ball--line instance, and sparse linear inverse problems illustrate that the
	additional relaxation parameters $(\theta,\eta)$ can be beneficial in certain regimes, and that non-stationary tuning can improve robustness across instances.

\bibliographystyle{plain}
\bibliography{bib_Arxiv}
\end{document}